\documentclass[12pt,letterpaper]{amsart}

\usepackage{euscript,amsfonts,amssymb,amsmath,amscd, amsthm,enumerate}

\usepackage{tikz,bm,float}
\usepackage{physics}
\usepackage{mathtools}
\usetikzlibrary{calc}
\usetikzlibrary{patterns}
\colorlet{lgray}{white!85!black}
\colorlet{lred}{white!75!red}
\usepackage{xcolor}
\colorlet{NextBlue}{blue!60}

\usepackage{comment}
\DeclareRobustCommand{\frac}[3][0pt]{%
	{\begingroup\hspace{#1}#2\hspace{#1}\endgroup\over\hspace{#1}#3\hspace{#1}}}

\usepackage{graphicx}

\usepackage{color}

\usepackage{ytableau,tikz,varwidth}
\newcommand*\circled[1]{\tikz[baseline=(char.base)]{
		\node[shape=circle,draw,inner sep=2pt] (char) {#1};}}

\newcommand{\verteq}{\rotatebox{90}{$\,=$}}
\newcommand{\equalto}[2]{\underset{\scriptstyle\overset{\mkern4mu\verteq}{#2}}{#1}}

\let\emptyset\varnothing

\usepackage[font=scriptsize,labelfont=bf]{caption} 
\usepackage{caption}
\usepackage[hidelinks]{hyperref}

\usepackage[margin=0.9in]{geometry}
\newtheorem{theorem}{Theorem} 
\newtheorem*{theorem*}{Theorem}
\newtheorem{lemma}[theorem]{Lemma}
\newtheorem{definition}[theorem]{Definition}
\newtheorem{proposition}[theorem]{Proposition}

\newtheorem{corollary}[theorem]{Corollary}

\newtheorem{claim}[theorem]{Claim}

\theoremstyle{remark}

\numberwithin{equation}{section} \numberwithin{theorem}{section}


\usepackage{MnSymbol}

\usepackage{skak}

\usepackage{adjustbox}
\usetikzlibrary{arrows}
\usetikzlibrary{decorations.pathreplacing}



\title[Factorization of certain Macdonald Littlewood-Richardson coefficients]
{Factorization of certain Macdonald Littlewood-Richardson coefficients}

\author{Konstantin Matveev and Yuchen Wei}

\address[Konstantin Matveev]{Department of Mathematics, Rutgers University, New Brunswick, NJ, USA. E-mail: kosmatveev@gmail.com}

\address[Yuchen Wei]{Department of Mathematics, Rutgers University, New Brunswick, NJ, USA. E-mail: yuccwei@gmail.com}

\begin{document}

\begin{abstract}
We find and prove a factorization formula for certain Macdonald Littlewood-Richardson coefficients $c_{\lambda\mu}^{\nu}(q,t)$. Namely, we consider the case that the Kostka number $K_{\mu, \nu -\lambda}$ is $1$. This settles a particular case of a more general conjecture of Richard Stanely, \cite{St89}.  This conjecture proposes that a factorization formula exists whenever the corresponding regular Littlewood-Richardson coefficient $c^{\nu}_{\lambda \mu}$ is $1$. 
\end{abstract}

\maketitle

\tableofcontents

\section{Introduction}
\subsection{Preliminaries}
\subsubsection{Macdonald functions}
$P_{\lambda} = P_{\lambda}(q, t)$ is a Macdonald symmetric function,  \cite{Mac99}. $\{P_{\lambda}\}$ with $\lambda$ ranging over partitions is a basis of the algebra of symmetric functions over $\mathbb{C}(q, t)$. $P_{\lambda}P_{\mu} = \sum_{\nu} c^{\nu}_{\lambda \mu} (q, t)P_{\nu}$, where the sum ranges over partitions $\nu$ with $|\nu| = |\lambda| + |\mu|$. We are interested in studying these structure constants $c_{\lambda\mu}^{\nu}(q,t)$, i.e. Macdonald Littlewood-Richardson coefficients. 

\subsubsection{Schur functions}  $P_{\lambda}$ for $q = t$ becomes the Schur function  $S_{\lambda}$, which doesn't depend on $q = t$.  For $n \geq \ell(\lambda)$ polynomial $S_{\lambda}(x_{1}, x_{2}, \ldots, x_{n})$ can be viewed as the character of the Schur module $E^{\lambda}$, an irreducible representation of $GL_{n}(\mathbb{C})$. So for $n \geq \ell(\nu) \geq \ell(\lambda), \ell(\mu)$ the coefficient $c^{\nu}_{\lambda \mu} =  c^{\nu}_{\lambda \mu}(q, q)$ is a non-negative integer that counts the multiplicity of $E^{\nu}$ in $E^{\lambda} \otimes E^{\mu}$,  \cite[chapter 8]{F97}. 

\subsubsection{Littlewood-Richardson rule} 
$c^{\nu}_{\lambda \mu}$ counts the number of semistandard skew tableaux $T$ of shape $\nu/\lambda$ with weight $\mu$ and additional property  that $w(T)$ is a lattice word. Here word $w(T)$ is obtained by reading symbols in $T$ from right to left in successive rows, starting with the top row. A word of length $N$ in the symbols $1,2,...,n $ is said to be a lattice word if for $1 \le r\le N $ and $ 1\le i\le n-1 $, the number of occurrences of the symbol $ i $ in the first $r$ letters of $w$ is not less than the number of occurrences of $ i+1$. This is the celebrated Littlewood-Richardson rule, \cite{LR34}, \cite[chapter 5]{F97}, \cite{Ste02}. $c^{\nu}_{\lambda \mu}$ also enumerates other families of combinatorial objects: Berenstein-Zelevinsky patterns,  \cite{BZ92}, Knutson-Tao  honeycombs/puzzles/hives,  \cite{KT99}, \cite{KTW04},  \cite{B00}, Vakil checkergames,  \cite{V06}. 

\subsubsection{Pieri formulas}
For one-row $\mu = (r)$ or one-column $\mu =1^{r}$ the corresponding Macdonald Littlewood-Richardson coefficients are given explicitly by the Pieri formulas.
	 	\begin{proposition}[{Mac99}]
	 	\label{prop: Pieri}
		\begin{align}
		\label{VerticalPieri}
				P_\lambda\cdot P_{1^{r}}=\sum_{\lambda\prec_v \nu,\ |\nu|-|\lambda|=r}\psi_{\nu/\lambda}'P_\nu,
		\end{align}
		where $ \nu/\lambda $ is a skew shape of $ r $ boxes without any two boxes in the same row, i.e., a vertical strip,  and $$ \psi'_{\nu/\lambda}:=\prod_{s\in C_{\nu/\lambda}-R_{\nu/\lambda}}\frac{b_\nu(s)}{b_{\lambda}(s)},\quad b_{\lambda}(s)=b_{\lambda}(i,j):=\begin{cases}\frac{1-q^{\lambda_i-j}t^{\lambda_j'-i+1}}{1-q^{\lambda_i-j+1}t^{\lambda_j'-i}} & \text{ if }s\in \lambda\\
		1 & \text{ otherwise.}
		\end{cases}, $$
		where $ C_{\nu/\lambda}$ (respectively $R_{\nu/\lambda}) $ is the set of columns (respectively rows) in $ \nu $ that intersect $ \nu/\lambda $, and $ \lambda' $ denotes the conjugate partition of $ \lambda $. Also, 
		\begin{align}
		\label{HorizontalPieri}
				P_\lambda\cdot P_{(r)}=\sum_{\lambda\prec_h \nu,\ |\nu|-|\lambda|=r} \frac{(q; q)_{r}}{(t; q)_{r}}\varphi_{\nu/\lambda}P_\nu, 
			\end{align}
				where $ \nu/\lambda $ is a skew shape of $ r $ boxes without any two boxes in the same column, i.e., a horizontal strip,  and
				$$ \varphi_{\nu/\lambda}:=\prod_{s\in C_{\nu/\lambda}}\frac{b_\nu(s)}{b_{\lambda}(s)}, \quad (a; q)_{s}:= \prod_{i=1}^{s} (1-aq^{i-1}).$$			
	 	\end{proposition}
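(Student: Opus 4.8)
The plan is to deduce both identities from the Macdonald scalar product $\langle\cdot,\cdot\rangle_{q,t}$, under which the $P_\lambda$ are orthogonal and $Q_\lambda:=b_\lambda P_\lambda$ form the dual basis, so that the coefficient of $P_\nu$ in any product is recovered as $\langle\,\cdot\,,Q_\nu\rangle$. Because $P_{1^r}=e_r$ and, from the single-row evaluation $b_{(r)}=\frac{(t;q)_r}{(q;q)_r}$, one has $Q_{(r)}=g_r=\frac{(t;q)_r}{(q;q)_r}P_{(r)}$, both statements reduce to computing $\langle P_\lambda e_r,Q_\nu\rangle$ and $\langle P_\lambda g_r,Q_\nu\rangle$. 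Here $g_r$ and $e_r$ are the symmetric functions defined by $\sum_{r\ge0}g_r(x)u^r=\prod_i\frac{(tux_i;q)_\infty}{(ux_i;q)_\infty}$ and $\sum_{r\ge0}e_r(x)u^r=\prod_i(1+ux_i)$, so the prefactor $\frac{(q;q)_r}{(t;q)_r}$ in \eqref{HorizontalPieri} is exactly the conversion from $g_r$ to $P_{(r)}$.

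First I would record the adjointness furnished by the Cauchy identity $\sum_\lambda P_\lambda(x)Q_\lambda(y)=\prod_{i,j}\frac{(tx_iy_j;q)_\infty}{(x_iy_j;q)_\infty}$: since this kernel factors through the one-variable generating functions above, multiplication by $g_r$ (respectively $e_r$) is adjoint to the corresponding skewing operator, giving $\langle P_\lambda g_r,Q_\nu\rangle=\langle g_r,P_{\nu/\lambda}\rangle$. Moreover $g_r$ and $e_r$, together with the interchange of rows and columns and the pairing $P_\lambda(q,t)\leftrightarrow Q_{\lambda'}(t,q)$, are swapped by the duality automorphism $\omega_{q,t}$. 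This collapses the two identities into one: I would establish the horizontal case and transport it to the vertical case by applying $\omega_{q,t}$, which simultaneously exchanges $\varphi_{\nu/\lambda}$ with $\psi'_{\nu/\lambda}$ and accounts for the differing product ranges $C_{\nu/\lambda}$ versus $C_{\nu/\lambda}-R_{\nu/\lambda}$.

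Next I would pin down the support, showing the coefficient vanishes unless $\nu/\lambda$ is a horizontal (resp. vertical) strip. The set of $\nu$ with nonzero coefficient coincides with the classical $q=t$ Pieri support: by the dominance-triangularity $P_\lambda=m_\lambda+\sum_{\mu<\lambda}(\ast)m_\mu$, multiplying $P_\lambda$ by a degree-$r$ function of single-row (resp. single-column) shape can only reach $\nu\supseteq\lambda$ with at most one added box per column (resp. per row), which is the strip condition.

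The remaining, and hardest, step is evaluating the surviving coefficient as $\prod_{s\in C_{\nu/\lambda}}b_\nu(s)/b_\lambda(s)$. For this I would either induct on $r$, using transitivity of horizontal strips together with the verified one-box case $r=1$ and checking the product formula multiplies consistently, or evaluate the skew function $P_{\nu/\lambda}$ at the principal specialization $x_i=t^{i-1}$, which collapses $P$ and $g_r$ to explicit products from which the arm–leg factors $b_\lambda(i,j)=\frac{1-q^{\lambda_i-j}t^{\lambda_j'-i+1}}{1-q^{\lambda_i-j+1}t^{\lambda_j'-i}}$ emerge. The main obstacle is the bookkeeping of the telescoping cancellation: adding a strip changes $\lambda_i$ in each occupied row, so $b_\nu$ and $b_\lambda$ differ along the whole row, not just at the new box, and one must control how these factors cancel across the columns that do not meet the strip so that precisely the product over $C_{\nu/\lambda}$ remains. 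Tracking the shifts in arms and legs between $\lambda$ and $\nu$ across an entire strip, rather than a single cell, is the delicate computation that the closed form encodes.
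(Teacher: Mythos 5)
The paper does not prove Proposition \ref{prop: Pieri} at all: it is quoted verbatim from Macdonald's book and the text simply points the reader to \cite[VI.6]{Mac99}. So there is no internal argument to compare yours against; what you have written is, in outline, the architecture of Macdonald's own derivation (the scalar product with dual basis $Q_\lambda=b_\lambda P_\lambda$, the identification $Q_{(r)}=g_r=\frac{(t;q)_r}{(q;q)_r}P_{(r)}$ explaining the prefactor in \eqref{HorizontalPieri}, adjointness of multiplication and skewing via the Cauchy kernel, and the automorphism $\omega_{q,t}$ exchanging the horizontal and vertical statements together with $\varphi\leftrightarrow\psi'$). That skeleton is the right one, up to the small slip that the adjoint of multiplication by $P_\lambda$ sends $Q_\nu$ to the skew function $Q_{\nu/\lambda}$, not $P_{\nu/\lambda}$.

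Two steps, however, are genuinely incomplete. First, the support argument does not work as stated: unitriangularity of $P_\lambda$ over the monomial basis only bounds the $\nu$ appearing in $P_\lambda e_r$ in dominance order (by rearrangements of $\lambda+\epsilon$ with $\epsilon$ a $0$--$1$ vector); it gives neither the containment $\nu\supseteq\lambda$ nor the one-box-per-row condition. Already for $r=1$ the dominance bound admits many $\nu$ not containing $\lambda$. Macdonald obtains the strip condition from the vanishing properties of the skew functions and a degree count after restricting to finitely many variables, which is a separate argument you would need to supply. Second, and more importantly, the entire content of the proposition is the closed-form evaluation $\prod_{s}b_\nu(s)/b_\lambda(s)$ over the stated index sets, and you leave this as a choice between two unexecuted strategies (induction on $r$ from the one-box case, or principal specialization $x_i=t^{i-1}$). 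You correctly identify the telescoping of arm--leg factors along rows meeting the strip as the crux, but that computation occupies most of \cite[VI.6]{Mac99} and is not carried out here. As it stands the proposal is a faithful roadmap of the standard proof rather than a proof.
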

	 	
See \cite[VI.6]{Mac99} for more details. Note that $P_{1^{r}} = e_{r}$, the $r$-th elementary symmetric function. 	It follows from the Pieri formulas that the corresponding Macdonald Littlewoood-Richardson coeffiecients are positive for $-1 < q, t < 1$ and become $1$ for $q=t$, the Schur case.  

\subsubsection{Formulas for Macdonald Littlewood-Richardson coefficients} \cite{S11} uses interpolation Macdonald polynomials to find an expression for $c^{\nu}_{\lambda \mu}(q, t)$, \cite{Y12} uses nonsymmetric Macdonald polynomials and combinatorics of alcove walks to find another formula. However, both expressions are very complex. We are interested in finding special cases when the corresponding coefficients easily factorize. 
More precisely, we are looking for triples of partitions $(\lambda, \mu, \nu)$ such that $c^{\nu}_{\lambda \mu}(q, t)$ is a product of terms $\left(\frac{1-q^{a}t^{b+1}}{1-q^{a+1}t^{b}} \right)^{\pm 1}$ with $a, b \in \mathbb{Z}$. Pieri formulas show that $(\lambda, 1^{r}, \nu)$ are such triples whenever $\nu/\lambda$ is a vertical $r$-strip, $(\lambda, (r), \nu)$ are such triples whenever $\nu/\lambda$ is a horizontal $r$-strip. By commutativity, same holds for $(1^{r}, \mu, \nu)$ and 
$((r), \mu, \nu)$. 

\subsubsection{Stanley's conjecture}
Setting $q=t$ implies that triples with factorization of this kind must satisfy $c^{\nu}_{\lambda \mu} = 1$. Richard Stanley conjectured in \cite[Conjecture 8.5]{St89} that the converse is also true. More precisely, his conjecture was for Jack functions which can be obtained from Macdonald functions by setting $q = t^{\alpha}$ and $t \to 1$. But it transfers to the more general Macdonald case in a straightforward way.   As far as we know, the conjecture in general remains open. Stanley also suggested that the factorization formula must belong to a specified finite family. However, it remains unclear how in general to chose the correct element of this family.  \cite{N16} proved Stanley's conjecture for the special case $\ell(\lambda), \ell(\mu), \ell(\nu) \leq 3$. In this paper we identify another family of special cases for which we can prove factorization.

\subsection{Results}
\subsubsection{Kostka numbers} The Kostka number $K_{\mu \chi}$ for a partition $\mu$ and a composition $\chi$ is the number of semistandard Young tableaux of shape $\mu$ and weight $\chi$. Suppose that $ \lambda=(\lambda_1,\lambda_2,...,\lambda_n) $, where some of the $\lambda_{i}$'s might be zeros. 
$\ell(\mu), \ell(\nu) \leq n$, $ \nu=(\lambda_1+\chi_1, \lambda_2+\chi_2, \ldots, \lambda_n+\chi_n) $, where all $\chi_i \ge 0 $ and $ \sum_{i=1}^n \chi_i=|\mu|$. We start with the following corollary of the Littlewood-Richardson rule. 
\begin{proposition}
\label{prop:LRvsKostka}
$c^{\nu}_{\lambda \mu} \leq K_{\mu \chi}$ with equality, in particular, when $\nu/\lambda$ is a horizontal strip.
\end{proposition}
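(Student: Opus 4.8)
The plan is to prove both the inequality and the equality condition directly from the Littlewood-Richardson rule as stated in the excerpt. Recall that $c^{\nu}_{\lambda\mu}$ counts semistandard skew tableaux $T$ of shape $\nu/\lambda$ with weight $\mu$ whose reading word $w(T)$ is a lattice word, while $K_{\mu\chi}$ counts semistandard Young tableaux of the straight shape $\mu$ with weight $\chi$. Here $\chi=(\nu_1-\lambda_1,\ldots,\nu_n-\lambda_n)$ is precisely the vector recording how many boxes of $\nu/\lambda$ sit in each row. So both sides count tableaux with weight constraints, but on ``dual'' shapes: one fills $\nu/\lambda$ with content $\mu$, the other fills $\mu$ with content $\chi$.

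First I would set up a map $\Phi$ from the LR tableaux counted by $c^{\nu}_{\lambda\mu}$ to the tableaux counted by $K_{\mu\chi}$. Given an LR filling $T$ of $\nu/\lambda$, for each entry $i$ recorded in $T$ I would read off which row of $\nu/\lambda$ the box lies in; since $T$ has weight $\mu$, the symbol $i$ appears exactly $\mu_i$ times, and since $T$ is column-strict the row-indices attached to a fixed symbol $i$ are weakly increasing as one moves along, so they can be arranged to form the $i$-th row of a new array $U$ of shape $\mu$. Concretely, $U$ is the tableau whose $i$-th row lists, in weakly increasing order, the row-coordinates in $\nu$ of the boxes of $T$ carrying the label $i$. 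The content of $U$ is then $\chi$ by construction, since row $k$ of $\nu/\lambda$ contributes $\chi_k$ boxes total. The key steps are then (i) to check $U$ is semistandard, i.e.\ weakly increasing along rows (immediate from the ordering) and strictly increasing down columns, and (ii) to argue that $\Phi$ is injective, which together give $c^{\nu}_{\lambda\mu}\le K_{\mu\chi}$.

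I expect step (i), establishing strict column-increase in $U$, to be the main obstacle, and this is exactly where the lattice-word condition on $w(T)$ must be used. The semistandardness and the lattice property of $T$ together force the row-coordinates attached to consecutive symbols $i,i+1$ in $T$ to interlace correctly: the lattice condition guarantees that up to any point in the reading word there are at least as many $i$'s as $(i+1)$'s, and column-strictness of $T$ guarantees an $(i+1)$ in a given column of $\nu$ lies strictly below an $i$ in that column. Translating these into the statement that the $j$-th entry of row $i+1$ of $U$ strictly exceeds the $j$-th entry of row $i$ is the technical heart of the argument; I would make it precise by tracking, for each label, the multiset of occupied rows and showing the lattice inequality forces the required domination of row-coordinate sequences.

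For the equality clause, I would specialize to the case that $\nu/\lambda$ is a horizontal strip. When $\nu/\lambda$ is a horizontal strip, no two boxes of $\nu/\lambda$ lie in the same column, so in any semistandard filling $T$ each row of the strip is filled left to right with weakly increasing symbols and there is no column constraint coming from within the strip; I would show that in this situation the map $\Phi$ is in fact a bijection by constructing an explicit inverse. Given any semistandard $U$ of shape $\mu$ and content $\chi$, one reconstructs $T$ by placing, for each row $k$ of $\nu/\lambda$, the symbols $i$ for which $k$ appears in row $i$ of $U$; the horizontal-strip hypothesis removes the obstructions that in general prevent every such $U$ from arising, and one checks the resulting $w(T)$ is automatically a lattice word because the column-strictness of $U$ supplies exactly the needed counting inequalities. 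Surjectivity of $\Phi$ onto the $K_{\mu\chi}$ tableaux then upgrades the inequality to equality.
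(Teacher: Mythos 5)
Your proposal is correct and is essentially the paper's own argument: the tableau $U$ you construct (whose $i$-th row records the row-coordinates in $\nu$ of the boxes of $T$ labelled $i$) is exactly the semistandard tableau that the paper encodes as the Gelfand--Tsetlin array $\mu^{j}_{k}=\#\{\text{entries }k\text{ in the first }j\text{ rows of }T\}$, and in both versions the lattice-word condition is precisely the interlacing/column-strictness inequality $\mu^{j+1}_{k+1}\leq\mu^{j}_{k}$. The treatment of the horizontal-strip case is also the same: the column constraints on $\nu/\lambda$ vanish, so the injection becomes a bijection.
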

See section \ref{sec:UST} for the proof of proposition \ref{prop:LRvsKostka}. Note that $\lambda_i-\lambda_{i+1}\ge \ell(\mu') $ for any $ 1\le i\le n-1$ guarantees that $\nu/\lambda$ is a horizontal strip. 
Thus for $\lambda$ with large row gaps the Littlewood-Richardson coefficient turns into the corresponding Kostka number.
\subsubsection{Unique semistandard tableaux}
\cite{BZ92} explicitly  describes pairs $(\mu, \chi)$ with $K_{\mu \chi} =1$. We prove a related result

\begin{proposition}
\label{prop: UniqueSST}
Given a semistandard tableau $ T $ with $ n $ boxes, the following are equivalent:
\begin{itemize} 
\item[($C_{n}$):] $ T $ is unique of its shape and weight.
			
\item[($C'_{n}$):]  For any two columns in $ T $, the later one is either obtained from the former by changing at most one value, or is a subset of the former.
\end{itemize} 
\end{proposition}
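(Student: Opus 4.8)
The plan is to prove the two implications separately, each by contrapositive. Throughout write $\mu$ for the shape of $T$ and $\chi$ for its weight, so that $(C_n)$ is exactly the statement $K_{\mu\chi}=1$. It is convenient to encode $T$ by the chain $\emptyset=\mu^{(0)}\subseteq\mu^{(1)}\subseteq\cdots\subseteq\mu^{(m)}=\mu$, where $\mu^{(k)}$ is the shape formed by the boxes of $T$ with entry $\le k$; semistandardness is precisely the statement that each $\mu^{(k)}/\mu^{(k-1)}$ is a horizontal strip, with $|\mu^{(k)}/\mu^{(k-1)}|=\chi_k$. The $p$-th column of $T$, viewed as a set of values and abbreviated $T_p$, is $\{k:\mu^{(k)}_p>\mu^{(k-1)}_p\}$, and column lengths weakly decrease from left to right, so any strictly longer column lies to the left of any shorter one. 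I record the reading of $(C'_n)$ that I will use: since ``changing at most one value'' forces equal column length, a pair $p<q$ satisfies $(C'_n)$ iff either $|T_p|>|T_q|$ and $T_q\subseteq T_p$, or $|T_p|=|T_q|$ and $|T_q\setminus T_p|\le 1$.

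\emph{Direction $(C_n)\Rightarrow(C'_n)$.} I argue the contrapositive: a violating pair of columns produces a second tableau of the same shape and weight. Suppose $p<q$ violate $(C'_n)$; choose such a pair with $q-p$ minimal, and then pick $x\in T_q\setminus T_p$ minimal. The value $x$ sits in column $q$ at some row $i$, while column $p$ (which is at least as long, and whose entries interlace those of column $q$ by weak increase along rows) carries in a lower row an entry $y<x$ that the violation makes available to exchange. The move is to transpose these two entries, putting the larger value into the lower-left box and the smaller into the upper-right box. Using the minimality of $(p,q)$ and of $x$ together with the weak increase along rows and strict increase down columns, one checks that only the immediate neighbours of the two boxes must be inspected and that every inequality survives, so the result is again semistandard; the multiset of entries and the shape are unchanged but the filling is not, so $T$ is not unique. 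The point demanding care is to treat uniformly the two shapes of violation isolated above --- unequal lengths with $T_q\not\subseteq T_p$, and equal lengths with $|T_q\setminus T_p|\ge 2$ --- and to check that in each a legal single transposition genuinely exists.

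\emph{Direction $(C'_n)\Rightarrow(C_n)$.} Again by contrapositive: if $T$ is not unique I exhibit a violating pair of columns of $T$. Let $T'\ne T$ have the same shape and weight, and let $k$ be the least value whose set of boxes differs in $T$ and $T'$; then $\mu^{(k-1)}$ is common to both, and the value-$k$ horizontal strips $S$ (in $T$) and $S'$ (in $T'$) are distinct strips of the same size over the same inner shape. Because $|S|=|S'|$ there is a column $a$ in $S\setminus S'$ and a column $b$ in $S'\setminus S$: in $T$ column $a$ receives a $k$ just below $\mu^{(k-1)}$, whereas column $b$ carries there an entry strictly larger than $k$, even though $T'$ certifies that a $k$ is admissible in column $b$. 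I claim the columns $a$ and $b$ of $T$ violate $(C'_n)$, the rightmost of the two playing the role of ``the later column.'' This matches the smallest example: for $T$ with rows $(1,1,3)$ over $(2)$ and $T'$ with rows $(1,1,2)$ over $(3)$ one has $k=2$, with $a$ the first and $b$ the third column, and indeed $\{3\}\not\subseteq\{1,2\}$.

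The main obstacle is the final claim of the second direction: converting the discrepant columns $a$ and $b$ into an actual failure of ``subset or one change.'' This requires comparing the full sets $T_a$ and $T_b$ and using both that $T$ is semistandard and that $T'$ witnesses the admissibility of a $k$ in column $b$; since the content below level $k$ is shared, the discrepancy is concentrated among entries $\ge k$, and the task is to rule out its being absorbed by a single changed value when $b$ is the shorter column, and by a single change when the two columns have equal length. I expect this step, rather than the swap construction of the first direction, to carry the real weight of the argument, and I would organize it around the interlacing among columns $a$, $b$, and the columns between them, possibly after reducing --- via the same minimality used above --- to the case where $a$ and $b$ are as close as the violation permits.
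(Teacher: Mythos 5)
Both of the steps that you yourself single out as ``demanding care'' are the places where the argument actually breaks, and in each case the claim you would need is false as stated, not merely unverified. For $(C_n)\Rightarrow(C'_n)$, a single transposition does not always exist. Take the tableau displayed as Tableau~1 in the paper's proof of this proposition (first row $1,1,1,1,1,1,2,6$; then $2,2,2,2,2,3,3$; $3,3,3,3,4,4,4$; $4,4,4,5,5,5,5$; $5,6,6,6,6,6,6$; $6,7,7$; $7$). Its only adjacent violating pair is $(p,q)=(3,4)$: column $3$ is $\{1,2,3,4,6,7\}$ of length $6$ and column $4$ is $\{1,2,3,5,6\}$ of length $5$, not a subset. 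Here $x=5$ is the unique (hence minimal) element of $T_q\setminus T_p$ and sits in row $4$ of column $4$, but the entries of column $3$ below row $4$ are $6$ and $7$, so there is no $y<x$ in a lower row of column $p$ and your move is undefined. Worse, one can check box by box that no transposition of two entries of this tableau produces a different semistandard tableau at all: for every box, either no strictly larger value can legally replace its entry, or the displaced entry fits at none of the positions of that larger value. The second filling is only reached by a $5$-cycle through columns $2$, $3$ and $7$, which is exactly the counterclockwise shift the paper performs in its Case~6; this is why the paper's proof is organized around cyclic shifts of chains of boxes (and an induction on $n$ that first peels off the strip of maximal entries) rather than around swaps.

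For $(C'_n)\Rightarrow(C_n)$, the pair $(a,b)$ you extract from the first level of disagreement need not violate $(C'_n)$. Let $T$ have rows $(1,1,1)$, $(2,3,4)$, $(5,5)$ and $T'$ have rows $(1,1,1)$, $(2,4,5)$, $(3,5)$; both are semistandard of shape $(3,3,2)$ with the same weight. The first level at which they disagree is $k=3$: the single $3$ occupies box $(2,2)$ in $T$ and box $(3,1)$ in $T'$, so $a=2$ and $b=1$. But columns $1$ and $2$ of $T$ are $\{1,2,5\}$ and $\{1,3,5\}$, which have equal length and differ by a single changed value, so this pair satisfies $(C'_n)$. (The tableau $T$ does violate $(C'_n)$, via its third column $\{1,4\}$ against either of the first two, as the proposition requires; the point is that your recipe, applied to an arbitrary witness $T'$, does not locate such a pair.) Repairing this would mean either choosing $T'$ far more carefully or propagating the discrepancy through the intervening columns; the paper instead compares the weights of $T$ and $T'$ restricted to each maximal rectangular block of columns, shows these must agree, and then inducts, which avoids the issue entirely.
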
		
\begin{figure}[h]
  \includegraphics[width=0.5\linewidth]{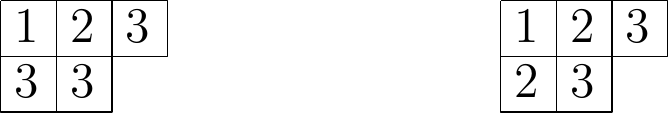}
  \caption{Left: Unique tableau of its shape and weight. Right: Non-unique tableau, a third column is not contained in the first column.}
  \label{uniquevsnot}
\end{figure}
See section \ref{sec:UST} for the proof of proposition \ref{prop: UniqueSST}.
\subsubsection{Macdonald Littlewood-Richardson coefficient as a rational function}
Let $X_{i}:=q^{\lambda_{i}}t^{1-i}$. Then the Pieri multiplicity 
$\psi'_{\nu/\lambda}$ can be expressed as 
\begin{align}
\label{VariableVerticalPieri}
\prod_{1 \leq j<k \leq n: \ \nu_{j} = \lambda_{j}, \ \nu_{k} = \lambda_{k} + 1}  \frac{X_{k} - X_{j} tq^{-1}}{X_{k} - X_{j}} \cdot \frac{X_{k} - X_{j} t^{-1}}{X_{k} - X_{j} q^{-1}}.
\end{align}
Note that the denominator of \eqref{VariableVerticalPieri} doesn't vanish for general $q, t$ and any composition $\lambda$. If $\lambda$ is a partition, then the numerator of \eqref{VariableVerticalPieri} for general $q, t$ vanishes precisely when $\nu$ fails to be a partition. 
Now we can express $P_{\mu}$ as a polynomial in $e_{r}$'s and repeatedly use \eqref{VariableVerticalPieri} to express $c^{\nu}_{\lambda \mu}(q, t)$ as a rational function in $X_{1}, X_{2}, \ldots, X_{n}$. This approach breaks the symmetry between $\lambda$ and $\mu$, but allows to study all $\lambda$'s at the same time by means of the same rational function. We originally encountered the folrmulas \eqref{FactorizationFormula} below by studying such rational functions in \emph{Mathematica}.

\subsubsection{Factorization formula} Let $T$ be a unique semistandard tableau with entries from $1, 2,$ $\ldots, n$. Call a triple $ (j,k,m) $ \textit{admissible} if $ 1\le j<k\le n $, and the $ m $-th column of $ T $ contains $ k $ but does not contain $ j $. Denote by $ a(j,k,m)$ the number of columns of $ T $ among the first $ (m-1) $ columns that also contain $ k $, but do not contain $ j $. Denote by $ b(j,k,m)$ the number of columns of $ T $ among the first $ (m-1) $ columns that contain $ j $, but do not contain $ k $. Denote by $ \psi_T(q,t) $ the $ \psi $-weight of the tableau $ T $ as in the formula $ P_\mu(x)=\sum_{T}\psi_T(q,t)x^T $. Our main result is the following. 
\begin{theorem}
\label{theorem: FactorizationFormula}

Suppose that $ K_{\mu \chi}=1 $. Let $T$ be the corresponding unique semistandard tableau. Then the Macdonald Littlewood-Richardson coefficient $ c_{\lambda\mu}^\nu(q,t) $ is given by
		
		\begin{align}
		\label{FactorizationFormula}
			c^{\nu}_{\lambda\mu}(q,t)=\psi_T(q,t)  \cdot \prod_{(j,k,m) \ -\text{admissible}} \left(\frac{X_{k} - q^{-a(j,k,m)+b(j, k,m)-1} tX_{j}}{X_{k}-q^{-a(j, k, m)+b(j, k, m)} X_{j}}\cdot
			\frac{X_{k} - q^{-a(j,k,m)}t^{-1} X_{j}}{X_{k}-q^{-a(j,k,m)-1}X_{j}} \right), 
		\end{align}
		where $ X_i:=q^{\lambda_i}t^{1-i} $.
		\end{theorem}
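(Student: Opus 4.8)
The plan is to fix $\mu$ together with the composition $\chi$ and to treat all base partitions $\lambda$ simultaneously through the universal rational function described just before the statement: expanding $P_\mu$ as a polynomial in the $e_r=P_{1^r}$ and applying the vertical Pieri rule in the form \eqref{VariableVerticalPieri} repeatedly expresses $c^\nu_{\lambda\mu}(q,t)$ (with $\nu=\lambda+\chi$) as the evaluation at $X_i=q^{\lambda_i}t^{1-i}$ of a single rational function $R=R_{\mu,\chi}(X_1,\dots,X_n)$. Since the right-hand side of \eqref{FactorizationFormula} is manifestly another such evaluation, the theorem is equivalent to the rational-function identity $R_{\mu,\chi}=\psi_T(q,t)\cdot\prod_{(j,k,m)}(\dots)$, which frees us from worrying whether a particular $\nu$ is a partition (that is detected automatically by the vanishing of Pieri numerators, as noted after \eqref{VariableVerticalPieri}). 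I would prove this identity by induction on the number of columns $\ell=\mu_1$ of $\mu$.

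For the base case $\mu=1^{r}$ the tableau $T$ is a single column, so $\psi_T=1$, every admissible triple has $m=1$ and hence $a(j,k,1)=b(j,k,1)=0$, and the product in \eqref{FactorizationFormula} collapses exactly to the right-hand side of \eqref{VariableVerticalPieri}. As $P_\mu=e_r$, one application of vertical Pieri gives $c^\nu_{\lambda\mu}(q,t)=\psi'_{\nu/\lambda}$, matching the claim.

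For the inductive step I would peel off the last column of $\mu$. Let $\hat\mu$ be $\mu$ with its last column (of height $h=\mu'_\ell$) deleted and $\hat T$ the corresponding truncation of $T$, of weight $\hat\chi$. Because the characterization $(C'_n)$ of Proposition \ref{prop: UniqueSST} is a condition on pairs of columns, it is inherited by $\hat T$; hence $\hat T$ is again the unique semistandard tableau of its shape $\hat\mu$ and weight $\hat\chi$, and the inductive hypothesis gives $R_{\hat\mu,\hat\chi}$ in factored form. Since $a(j,k,m)$ and $b(j,k,m)$ involve only the columns before $m$, the factors of \eqref{FactorizationFormula} indexed by triples with $m<\ell$ are literally the factors for $\hat T$ and are untouched by appending the last column; thus the inductive step amounts to showing that multiplying by $e_h$ and straightening updates $R_{\hat\mu,\hat\chi}$ by exactly the factor $\frac{\psi_T}{\psi_{\hat T}}\prod_{(j,k,\ell)}(\dots)$, the new product running over the admissible triples supported on the last column.

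The hard part will be this last update, and specifically the straightening. Writing $P_{\hat\mu}\,e_h=d_\mu P_\mu+\sum_{\kappa\triangleleft\mu}d_\kappa P_\kappa$, extraction of the coefficient of $P_\nu$ yields $d_\mu\,c^\nu_{\lambda\mu}=\sum_{\hat\nu}c^{\hat\nu}_{\lambda\hat\mu}\,\psi'_{\nu/\hat\nu}-\sum_{\kappa\triangleleft\mu}d_\kappa\,c^\nu_{\lambda\kappa}$; that is, the iterated Pieri computes a sum over all ways of adding a final vertical $h$-strip, from which the dominance-lower contributions $c^\nu_{\lambda\kappa}$ must still be removed. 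These lower coefficients lie outside the induction (their shapes $\kappa$ need not satisfy the Kostka-one hypothesis), so the collapse cannot proceed by evaluating them separately; instead one must show that the spurious strip-additions reorganize so as to cancel the straightening terms, leaving a single clean factor. Already for $\mu=(2)$ this is visible: a single prescribed path does not reproduce \eqref{FactorizationFormula}, and the $b$-dependent discrepancy in the second Pieri factor is cancelled precisely by the second path together with the $P_{(1,1)}$ straightening term. I expect the hypothesis $K_{\mu\chi}=1$ — through the rigid nested/near-equal column structure forced by $(C'_n)$ — to be exactly what drives this cancellation in general, making the $b$-dependent exponents telescope across columns and leaving the $q^{-a}$ exponents of \eqref{FactorizationFormula}, with the surviving product reorganizing into $\psi_T$ times the stated factors. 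Establishing this cancellation is the technical core. Throughout, the specialization $q=t$ offers a check: there every factor is $1$, so the identity reduces to $c^\nu_{\lambda\mu}=1$, consistent with the bound $c^\nu_{\lambda\mu}\le K_{\mu\chi}=1$ of Proposition \ref{prop:LRvsKostka}, the cases $c^\nu_{\lambda\mu}=0$ being recorded, for general $q,t$, by a vanishing Pieri numerator as noted after \eqref{VariableVerticalPieri}.
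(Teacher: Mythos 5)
Your setup coincides with the paper's starting point (expand $P_\mu$ in the $e_r$, iterate the vertical Pieri rule \eqref{VariableVerticalPieri}, and work with a universal rational function in the $X_i$), and your base case is correct. But the proposal stops exactly where the theorem begins: you write that ``establishing this cancellation is the technical core'' and then offer only the expectation that the hypothesis $K_{\mu\chi}=1$ will make the straightening terms and the spurious Pieri paths reorganize into the claimed factor. No mechanism for that reorganization is given, and as you yourself observe, the lower coefficients $c^\nu_{\lambda\kappa}$ appearing in $P_{\hat\mu}e_h=d_\mu P_\mu+\sum_{\kappa\triangleleft\mu}d_\kappa P_\kappa$ are not covered by your inductive hypothesis, so the induction as stated cannot close. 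This is a genuine gap, not a routine verification: it is the entire content of the proof.

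The paper avoids this difficulty by choosing a different unit of induction. Instead of peeling off one column, it peels off an entire rectangular block (a maximal group of equal-height columns of $\mu'$) and inducts on the number $d$ of blocks. Because the block heights strictly decrease, every straightening term $P_\eta$ in $P_{\mu^1}P_{\mu^0}=P_\mu+\sum_{\eta_1'>k_1}c^\eta_{\mu^0\mu^1}P_\eta$ satisfies $\eta_1'>k_1$, and a dominance-order counting argument (forcing $\sum_{i>m_1}\kappa_i'=\sum_{j\ge 2}k_jm_j$ for any $e_{\kappa'}$ that could reach $\nu$, against $\sum_{i>m_1}\eta_i'<\sum_{j\ge 2}k_jm_j$) shows these terms contribute nothing to the coefficient of $P_\nu$ --- so at the block level no cancellation is needed at all. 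The price is that one must prove the formula directly for a single rectangular block, which the paper does by classifying rectangular unique tableaux into three kinds (Proposition \ref{prop: threekinds}) and reducing the second and third kinds to the horizontal Pieri formula via the intrinsic-tableau and complement/reversal bijections. In your column-by-column scheme consecutive columns of equal height are precisely the situation where the straightening terms genuinely survive (your own $\mu=(2)$ example shows the $P_{(1,1)}$ term is needed), so the vanishing argument is unavailable and the cancellation would have to be fought head-on; you would essentially need to rediscover the block structure to make it work.
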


\subsubsection{Example} If $T$ is a one-column tableau, it is always unique of its shape and weight. Then $a(j, k, 1) = b(j, k, 1)=0$, so \eqref{FactorizationFormula} becomes the vertical Pieri formula \eqref{VariableVerticalPieri}.  If $T$ is a one-row tableau, it is also always unique of its shape and weight. This case corresponds to the horizontal Pieri-formula, see subsection \ref{sub:horizontal} for more details. Here is an example that is not a Pieri formula. Let $n=3$, $\mu = (3, 2)$, $\nu_{1}-\lambda_{1}=\nu_{2}-\lambda_{2}=1, \nu_{3}-\lambda_{3}=3$. The corresponding unique semistandard tableau is shown on the left of Figure \ref{uniquevsnot}. The formula \eqref{FactorizationFormula} in this case becomes
\begin{multline*}
\frac{(1+ q) (1 - t)}{1 -q t} \cdot 
\frac{\left(X_3-q^{-1}tX_2\right)  \left(X_3-t^{-1} X_2\right)}{
 \left(X_3- X_2\right) \left(X_3-q^{-1} X_2\right)} \cdot \frac{\left(X_2-tX_1\right)  \left(X_2-t^{-1} X_2\right)}{
 \left(X_2- qX_1\right) \left(X_2-q^{-1} X_1\right)} \\ \cdot \frac{\left(X_3-q^{-1}tX_1\right)  \left(X_3-t^{-1} X_1\right)}{
 \left(X_3- X_1\right) \left(X_3-q^{-1} X_1\right)} \cdot \frac{\left(X_3-q^{-2}tX_1\right)  \left(X_3-q^{-1}t^{-1} X_1\right)}{\left(X_3- q^{-1}X_1\right) \left(X_3-q^{-2} X_1\right)} \cdot \frac{\left(X_3-q^{-1}tX_2\right)  \left(X_3-t^{-1} X_2\right)}{
 \left(X_3- X_2\right) \left(X_3-q^{-1} X_2\right)} 
\end{multline*}
		
\subsubsection{Connection with Stanley's conjecture} Let $$ U(a,l)=1-q^{a+1}t^{l} ,\quad L(a,l)=1-q^{a}t^{l+1} $$ denote the upper hook length and the lower hook length, respectively. Let $ s $ denote a box in a partition, and let $ a(s) $(respectively, $ l(s) $) be the arm(respectively, leg) length at $ s $.  We show that	

\begin{proposition}\label{prop:integral}
Under the assumption of Theorem \ref{theorem: FactorizationFormula}, we have
	\begin{align}\label{eq:integral}
	\left( \prod_{s\in\lambda}L(a(s),l(s)) \right)\cdot\left( \prod_{s\in\mu}L(a(s),l(s)) \right)\cdot\left( \prod_{s\in\nu}U(a(s),l(s)) \right)\cdot c_{\lambda\mu}^{\nu}(q,t)  
	\end{align}
	is a polynomial in $ L, U $, moreover, the number of factors in the form of $ U $ equals the number of factors in the form of $ L $.
\end{proposition}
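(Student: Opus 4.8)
The plan is to substitute the explicit formula \eqref{FactorizationFormula} into the left-hand side of \eqref{eq:integral} and show that the resulting rational function in $q,t$ collapses to a product of $U$- and $L$-factors. Denote by $\Phi$ the expression in \eqref{eq:integral}. First I would identify the three hook prefactors with Macdonald's normalization constants: $\prod_{s\in\lambda}L(a(s),l(s))=c_\lambda$ and $\prod_{s\in\mu}L(a(s),l(s))=c_\mu$ are the constants in $J_\rho=c_\rho P_\rho$, while $\prod_{s\in\nu}U(a(s),l(s))=c'_\nu$ is the dual hook product (\cite[VI.8]{Mac99}). Conceptually this rewrites $\Phi$ as $c_\lambda c_\mu c'_\nu\, c^\nu_{\lambda\mu}=\langle J_\lambda J_\mu,\,J_\nu\rangle$, but for the proof I keep the factors explicit. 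The payoff is already visible for the counting claim: the prefactor $c_\lambda c_\mu c'_\nu$ supplies $|\lambda|+|\mu|$ factors of type $L$ and $|\nu|=|\lambda|+|\mu|$ factors of type $U$, hence is balanced.

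Next I would substitute $X_i=q^{\lambda_i}t^{1-i}$ into each admissible factor of \eqref{FactorizationFormula}. Using $X_j/X_k=q^{d}t^{e}$ with $d:=\lambda_j-\lambda_k\ge 0$ and $e:=k-j\ge 1$, every binomial becomes $X_k-cX_j=X_k(1-c\,q^{d}t^{e})$; the two powers of $X_k$ in the numerator cancel the two in the denominator, and the factor attached to $(j,k,m)$ reduces to
\[
\frac{(1-q^{d-a+b-1}t^{e+1})(1-q^{d-a}t^{e-1})}{(1-q^{d-a+b}t^{e})(1-q^{d-a-1}t^{e})},
\]
where $a=a(j,k,m)$ and $b=b(j,k,m)$. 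Each of these four binomials has the shape $1-q^{\alpha}t^{\beta}$, hence is a candidate $U$ or $L$. I would also expand $\psi_T(q,t)$ through the combinatorial Pieri description (\cite[VI.6]{Mac99} and \eqref{VariableVerticalPieri}): it is a ratio of hook ratios $b(s)=L(a(s),l(s))/U(a(s),l(s))$, contributing $L$-numerators paired one-to-one with $U$-denominators.

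The main step, and the main obstacle, is to prove that all denominator factors cancel, so that $\Phi$ is a genuine product rather than a ratio. The denominators to be removed are the $(1-q^{d-a+b}t^{e})$ and $(1-q^{d-a-1}t^{e})$ from the admissible triples together with the $U$-denominators of $\psi_T$; they must cancel against the $U$-factors of $c'_\nu$ and against numerator factors. Here Proposition \ref{prop: UniqueSST} is the essential tool: the column condition $(C'_n)$ — every later column is either a one-entry modification of, or a subset of, an earlier column — rigidly controls which boxes of $\nu$ carry which arm and leg lengths, and this is precisely what lets me convert the combinatorial counts $a(j,k,m),b(j,k,m)$ into arm/leg data of $\nu$. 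I expect the clean form of this to be a bijection between admissible triples (respectively the boxes recorded by $\psi_T$) and boxes of $\nu$, under which each denominator factor is matched with an identical factor coming from $c'_\nu$ or from a neighboring admissible triple; showing the matching is exhaustive is where the combinatorial work lies.

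Finally, for the equality $\#U=\#L$: the prefactor contributes $|\lambda|+|\mu|$ factors of each type, and $c^\nu_{\lambda\mu}=\psi_T\cdot\prod_{\mathrm{adm}}(\cdots)$ has equally many numerator and denominator factors (each admissible triple and each hook ratio of $\psi_T$ contributes equally to top and bottom), so the claim reduces to checking that the cancellations and the $U/L$ types of the surviving admissible and $\psi_T$ factors remain balanced — a single combinatorial identity which I would read off from the resolved presentation of the previous step, again using $(C'_n)$. As an independent consistency check I would use the conjugation–swap duality $\Phi(\lambda,\mu,\nu;q,t)=\Phi(\lambda',\mu',\nu';t,q)$, derived from $\omega_{q,t}P_\lambda(q,t)=Q_{\lambda'}(t,q)$ and the arm/leg identity $c_{\lambda'}(t,q)=c'_\lambda(q,t)$; since $q\leftrightarrow t$ interchanges $U(a,l)$ with $L(l,a)$, this duality must be compatible with the final count and provides a good sanity test on the bookkeeping.
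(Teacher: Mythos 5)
Your setup is sound and coincides with the paper's: the prefactor is Macdonald's integral-form normalization $c_\lambda c_\mu c'_\nu$, contributing $|\lambda|+|\mu|$ lower hooks and $|\nu|=|\lambda|+|\mu|$ upper hooks; each admissible factor of \eqref{FactorizationFormula} reduces (your algebra here is correct) to $\frac{L(d-a+b-1,\,e)}{U(d-a+b-1,\,e)}\cdot\frac{U(d-a-1,\,e-1)}{L(d-a-1,\,e-1)}$ with $d=\lambda_j-\lambda_k$, $e=k-j$; and the balance $\#U=\#L$ follows once each such fraction is realized as a ``flipper'' turning an $L$-hook of $\lambda$ or $\mu$ into a $U$, or a $U$-hook of $\nu$ into an $L$. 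But the entire content of the proposition is the step you defer with ``showing the matching is exhaustive is where the combinatorial work lies,'' and two specific ingredients needed to carry it out are absent from your plan. First, whether a fraction $U(a,l)/L(a,l)$ can flip a box at all depends on $\lambda$ actually containing a box with arm $a$ and leg $l$; this is not automatic and requires inequalities on the row gaps of $\lambda$ (e.g.\ $\lambda_{j-1}-(\lambda_j+c_j)\ge 0$ for second-kind blocks, $\lambda_{j-1}-\lambda_j\ge x_{j-1}$ for third-kind blocks). These are extracted in the paper's Lemmas \ref{lemma:characterization second kind} and \ref{lemma:characterization third kind} from the positivity of the classical coefficient, by locating where the unique Littlewood--Richardson filling of $\nu/\lambda$ places its entries; condition $(C'_n)$ alone does not give them, since they constrain $\lambda$, not $T$.

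Second, the clean bijection you hope for — each denominator factor matched with an identical factor from $c'_\nu$ or a neighboring admissible triple — does not exist as stated. Some factors are ``fictitious flippers'': their arm--leg combination corresponds to no box of $\lambda$, $\mu$ or $\nu$ (typically because rows $j-1$ and $j$ both receive add-ons, so the relevant leg length is unattainable). These cannot be absorbed into the hook products and must instead cancel against their exact reciprocals arising from the adjacent pair $(i,j-1)$ versus $(i,j)$; verifying that every fictitious $U/L$ with leg $j-i-1$ finds a reciprocal $L/U$ with leg $j-1-i$ (and vice versa) is a range-containment check on the arm parameters that occupies most of the paper's Section \ref{section:connection}, split over the three kinds of rectangular blocks and then an induction on the number of blocks. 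Your duality sanity check $\Phi(\lambda,\mu,\nu;q,t)=\Phi(\lambda',\mu',\nu';t,q)$ is a reasonable consistency test but cannot substitute for this case analysis, not least because the hypothesis $K_{\mu,\nu-\lambda}=1$ is not preserved under conjugation. As written, the proposal is a correct outline of the same strategy the paper uses, with the proof itself still to be supplied.
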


See section \ref{section:connection} for the proof of proposition \ref{prop:integral}.

Note that Stanley's conjecture is about the Jack polynomials and requires the weaker assumption that $ c_{\lambda\mu}^\nu=1 $. However, the Jack case can be viewed as a specialization of the Macdonald case by letting $ q=t^\alpha $ and $ t\to 1 $. If we apply the L'H\^opital's rule in the limit of $ c_{\lambda\mu}^{\nu}(t^\alpha,t) $ as $ t\to 1 $, and replace $ U(a,l) $(respectively, $ L(a,l) $) by $ \alpha(a+1)+l $(respectively, $ \alpha(a)+l+1 $), then we get the Stanley's conjecture.

The proof of proposition \ref{prop:integral} follows directly from theorem \ref{theorem: FactorizationFormula}. The idea is to rewrite factors in \ref{theorem: FactorizationFormula} into the form of lower and upper hook lengths and cancel the repeated factors.

\subsubsection{Example.} 
\begin{center}
	\begin{tabular}{ccc}	
		
		\noindent\begin{minipage}{0.3\linewidth}
			\centering	
			\ytableausetup{mathmode,boxsize=1.3em}
			 	\begin{ytableau}
					\ &\ & \ &1&1\\
					\ & \ &1&2\\
					\ &2&2&3\\
				    \ &3&3
			\end{ytableau}
		\end{minipage}
		
		$\quad \iff$

		\noindent\begin{minipage}{0.3\linewidth}
			\centering	
			\ytableausetup{mathmode,boxsize=1.3em}

				\begin{ytableau}
					1&1&2\\
					2&3&3\\
					3&4&4
			\end{ytableau}
		\end{minipage}
	\end{tabular}
	\renewcommand{\figurename}{Bijection}
	\captionof*{figure}{An example with $\lambda=(3,2,1,1), \mu=(3,3,3), \nu=(5,4,4,3)$.}
\end{center}

Consider the above partitions and tableaux. Theorem \ref{theorem: FactorizationFormula} tells us that 

\begin{align*} c_{\lambda\mu}^\nu(q,t)=\underbrace{\left(\frac{L(0,0)}{U(0,0)}\cdot\frac{U(1,0)}{L(1,0)}\cdot\frac{L(0,0)}{U(0,0)}\cdot\frac{U(2,0)}{L(2,0)}\right)}_{\psi_{T}(q,t)}&\cdot
\left(\textcolor{red}{ \frac{L(\lambda_{2}-\lambda_{3}-1,1)}{U(\lambda_{2}-\lambda_{3}-1,1)}}\cdot
\frac{U(\lambda_{2}-\lambda_{3}-1,0)}{L(\lambda_{2}-\lambda_{3}-1,0)} \right)\\
\cdot\left(\frac{L(\lambda_{2}-\lambda_{4},2)}{U(\lambda_{2}-\lambda_{4},2)}\cdot
\textcolor{red}{\frac{U(\lambda_{2}-\lambda_{4}-1,1)}{L(\lambda_{2}-\lambda_{4}-1,1)}} \right)&\cdot
\left(\textcolor{blue}{\frac{L(\lambda_{1}-\lambda_{2},1)}{U(\lambda_{1}-\lambda_{2},1)}}\cdot
\frac{U(\lambda_{1}-\lambda_{2}-1,0)}{L(\lambda_{1}-\lambda_{2}-1,0)} \right)\\
\cdot
\left(\textcolor{green}{\frac{L(\lambda_{1}-\lambda_{3}-1,2)}{U(\lambda_{1}-\lambda_{3}-1,2)}}\cdot
\textcolor{blue}{\frac{U(\lambda_{1}-\lambda_{3}-1,1)}{L(\lambda_{1}-\lambda_{3}-1,1)}} \right)&\cdot
\left(\frac{L(\lambda_1-\lambda_4,3)}{U(\lambda_1-\lambda_4,3)}\cdot\textcolor{green}{\frac{U(\lambda_1-\lambda_4-1,2)}{L(\lambda_1-\lambda_4-1,2)}}\right)\\
\end{align*}
where the fractions with the same color are reciprocals of each other. Now we can give a possible way of realizing $ c_{\lambda\mu}^\nu(q,t) $ as a product of upper and lower hook lengths as follows

\begin{center}
	\begin{tabular}{ccc}	
		
		\noindent\begin{minipage}{0.3\linewidth}
			\centering	
			\ytableausetup{mathmode,boxsize=1.3em}
		 	\begin{ytableau}
					L &L &\color{orange} U\\
					L & \color{orange}U\\
					L\\
					L
			\end{ytableau}
		\end{minipage}
		
		$\quad $

		\noindent\begin{minipage}{0.3\linewidth}
			\centering	
			\ytableausetup{mathmode,boxsize=1.3em}

				\begin{ytableau}
					L &L & L\\
					L &L & L\\
					\color{orange} U &\color{orange} U &L
			\end{ytableau}
		\end{minipage}
	
	$\quad $

	\noindent\begin{minipage}{0.3\linewidth}
		\centering	
		\ytableausetup{mathmode,boxsize=1.3em}
		
			\begin{ytableau}
				U & U & \color{orange}L & U & \color{orange}L\\
				U & U &\color{orange} L & U\\
				U & U & U & \color{orange}L\\
			    U & U & U\\
		\end{ytableau}
	\end{minipage}
	\end{tabular}
	\renewcommand{\figurename}{Bijection}
	\captionof*{figure}{This picture shows how to flip lower hooks in $ \lambda, \mu $ and upper hooks in $ \nu $. }
\end{center}

Note that the fraction $ \textcolor{red}{\frac{U(\lambda_{2}-\lambda_{4}-1,1)}{L(\lambda_{2}-\lambda_{4}-1,1)}} $ corresponds to a ``fictitious flipper"(to be discussed in section \ref{section:connection}) in the sense that there is no box in the second row of $ \lambda $ with leg length one, however, we can pair it with its reciprocal $ \textcolor{red}{\frac{L(\lambda_{2}-\lambda_{3}-1,1)}{U(\lambda_{2}-\lambda_{3}-1,1)}} $.

The fraction $ \textcolor{blue}{\frac{L(\lambda_{1}-\lambda_{2},1)}{U(\lambda_{1}-\lambda_{2},1)}} $ correspond to a ``fictitious flipper" since there is no box in the first row of $ \nu $ with leg length one, however, we pair it with its reciprocal $ \textcolor{blue}{\frac{U(\lambda_{1}-\lambda_{3}-1,1)}{L(\lambda_{1}-\lambda_{3}-1,1)}} $.

Likewise, the fraction $ \textcolor{green}{\frac{U(\lambda_1-\lambda_4-1,2)}{L(\lambda_1-\lambda_4-1,2)}} $ is a ``fictitious flipper" since there is no box in the first row of $ \lambda $ with leg length two, however, we pair it with its reciprocal $ \textcolor{green}{\frac{L(\lambda_{1}-\lambda_{3}-1,2)}{U(\lambda_{1}-\lambda_{3}-1,2)}} $.

All the section \ref{section:connection} is devoted to showing that whenever we have a ``fictitious flipper", it is guaranteed to find its reciprocal in the expression of $ c_{\lambda\mu}^{\nu}(q,t) $. The way that we examine this fact is by comparing for each fixed triple of integers $ (i,j-1,j) $ such that $ i<j-1 $, the number of ``fictitious flippers" in the form of $ \frac{U}{L} $ with leg length $ (j-i)-1 $(respectively, $ \frac{L}{U} $ with leg length $ (j-1)-i $) is no more than the number of their reciprocals with leg length $ (j-1)-i $(respectively, $ (j-i)-1 $).

\begin{center}
	\begin{tabular}{ccc}	
		
			\centering	
			\ytableausetup{mathmode,boxsize=1.3em}
			 	\begin{ytableau}
				\none[i]& \none&\none&\none& \none&\none&\none&\none & \ & \ & \ &\ &\ & \ & &\ \\
				\none& \none&\none&\none&\none& \none&\none&\none&\none[\dots]&\none[\dots]\\
				\none& \none&\none&	\none&\none& \none&\none&\none&\none[\dots]&\none[\dots]\\
				\none[j-1]& \none&\none&\none&\none& *(lime) &  *(lime) & *(lime) & *(lime) & *(lime) &  *(lime) &  &  \\
				\none[j]& \none&\none&\ &\ & \ & \ & \ & \ & \ & 
			\end{ytableau}
		
	\end{tabular}
\renewcommand{\figurename}{fictitious flippers}
\captionof*{figure}{This picture demenstrates that all the potential ``fictitious flippers" located in the $ i $-th row with ``fictitious leg length" $ (j-1)-i $ in $ \nu $, or $ (j-i)-1 $ in $ \lambda $, correspond to some of the colored boxes.}
\end{center}

\subsubsection{Proof outline of Proposition \ref{prop: UniqueSST}}

We do induction on the total number $ n $ of boxes in a given semistandard tableau $ T $.

To show that $ (C_n)\implies (C_n') $, for arbitrary two columns in $ T $, we identify seven different cases characterized by the relative positions of the lowest boxes in these two columns as well as the appearances of the largest integer $ l $ of $ T $ in these two columns. By the induction hypotheses, we can quickly go through the first five cases. 

When the sixth case happens, we use induction to argue that $ (C_n') $ must hold, namely, $ [\lambda_j'] $ is a subset of $ [\lambda_i'] $ in this particular case. It turns out that this further boils down to showing that when $ \lambda_i'=\lambda_j'+1 $, we must have $ [\lambda_j']=[\lambda_i']\setminus\{l\} $ for any $ i<j $ satisfying Case 6. Proceed by proving by contradiction. Assume on the contrary that $ \lambda_i=\lambda_j'+1 $ and $ [\lambda_j']\ne [\lambda_i']\setminus\{ l \} $ for some $ i<j $ in Case 6, then after doing a counterclockwise shift of a sequence of boxes(the notations are defined in the proof):
\begin{align*}
	( T(\lambda_j'-h,S), T(\lambda_j'-h+1,s), & T(\lambda_j'-h+2,s),..., T(\lambda_i',s),T(\lambda_i',s+1),...,\\
	& T(\lambda_i', m), T(\lambda_j',S),T(\lambda_j'-1,S),..., T(\lambda_j'-h+1, S) )
\end{align*}
we will get a new tableau. This contradicts $ (C_n) $.

It remains to show that Case 7 is impossible. After ruling out the simple case where $ T(\lambda_i',i)>T(\lambda_j'-1,j) $ we claim that doing a clockwise shift of the boxes:

$$ (T(\lambda_j'-h+1,i),T(\lambda_j'-(h-1)+1,i),...,T(\lambda_j',i), T(\lambda_j',j),T(\lambda_j'-1,j),...,T(\lambda_j'-h+1,j))  
$$
yields a new tableau, contradicting $ (C_n) $.

To show that $ (C_n')\implies (C_n) $, we prove by contradiction. Assume that we have two distinct tableaux with the same shape and weight, we first show that the weights of each corresponding vertical rectangular blocks must be the same. This implies that the positions of $ l $ are fixed. Then we observe that $ C_{n-\norm{L}}' $ holds which implies that the rest of the tableaux are also the same. This concludes the proof.

Suppose tableau $T$ is unique of its shape and weight. It is comprised of a sequence of rectangular blocks of decreasing heights. Each of these blocks must also be unique of its shape and weight by proposition \ref{prop: UniqueSST}. Consider such block of size $k \times m$. Then proposition \ref{prop: UniqueSST} implies that weights of its columns are sets of size $k$ and any two of these sets differ by at most one element. 
\begin{proposition}
\label{prop: threekinds}
Suppose $S_{1}, S_{2}, \ldots, S_{m}$ are sets of cardinality $k$ such that $|S_{i} \Delta S_{j}| \leq 1$ for all $1 \leq i, j \leq m$. 
Then either $S_{1} =S_{2} = \cdots = S_{m}$ (call it first kind), or $\displaystyle \biggr \lvert \bigcap_{i=1}^{m}S_{i} \biggr \rvert = k-1$ (call it second kind), or  $\displaystyle \biggr \lvert \bigcup_{i=1}^{m}S_{i} \biggr \rvert = k+1$ (call it third kind).  
\end{proposition}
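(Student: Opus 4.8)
The plan is to exploit a parity observation first. Because all the $S_i$ have the same cardinality $k$, the quantity $|S_i \Delta S_j| = 2k - 2|S_i \cap S_j|$ is always even; so the hypothesis is really the statement that any two of the sets either coincide or are related by a single swap, i.e. $|S_i \Delta S_j| \in \{0,2\}$, equivalently $|S_i \setminus S_j| \le 1$. If all the sets coincide we are in the first kind and there is nothing to do, so I assume $S_1 \ne S_2$, say. Setting $C := S_1 \cap S_2$, the swap relation forces $|C| = k-1$ and $S_1 = C \cup \{a\}$, $S_2 = C \cup \{b\}$ with $a \ne b$ and $a,b \notin C$. The whole proof then turns on one dichotomy: whether or not \emph{every} member of the family contains $C$.

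Suppose first that $S_\ell \supseteq C$ for all $\ell$. Since $|S_\ell| = k = |C|+1$, each $S_\ell = C \cup \{x_\ell\}$ for a single element $x_\ell \notin C$, and I claim $\bigcap_\ell S_\ell = C$, which is the second kind. Indeed the intersection certainly contains $C$; and if some $y \notin C$ lay in every $S_\ell$ then $x_\ell = y$ for all $\ell$, contradicting $x_1 = a \ne b = x_2$. Hence the common intersection is exactly $C$, of size $k-1$.

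The substantial case is when some $S_\ell$ omits an element $d \in C$. I would first pin down the shape of any such set. Comparing $S_\ell$ with $S_1$ and with $S_2$ and using that each pairwise symmetric difference is at most $2$ forces $S_1 \setminus S_\ell = \{d\} = S_2 \setminus S_\ell$; tracking the single remaining element of $S_\ell$ then shows it must equal $b$ (to keep $|S_2 \setminus S_\ell| \le 1$) while $a$ survives, so $S_\ell = (C \setminus \{d\}) \cup \{a,b\}$ and in particular $S_\ell \subseteq C \cup \{a,b\}$. The target is now to promote this to: $S_p \subseteq C \cup \{a,b\}$ for \emph{every} $p$, for then $\bigcup_p S_p = C \cup \{a,b\}$ has size $k+1$ and we are in the third kind.

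This last step is where I expect the real work to lie, and it is the one place the argument uses three reference sets at once. Arguing by contradiction, a set $S_p$ containing some rogue element $z \notin C \cup \{a,b\}$ would have $z$ as the unique element of both $S_p \setminus S_1$ and $S_p \setminus S_2$; matching the two resulting expressions for $S_p \setminus \{z\}$ forces the elements deleted from $S_1$ and from $S_2$ to be precisely $a$ and $b$, so that $S_p = C \cup \{z\}$. But then comparing this $S_p$ with the special set $S_\ell = (C \setminus \{d\}) \cup \{a,b\}$ exhibits the two distinct elements $d$ and $z$ in $S_p \setminus S_\ell$, violating $|S_p \Delta S_\ell| \le 2$. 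The only care needed is the bookkeeping of which of $a,b,d,z$ lands in which set; the parity remark at the start guarantees that every symmetric difference in sight is either $0$ or $2$, which keeps that bookkeeping tight.
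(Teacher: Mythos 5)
Your proof is correct and follows essentially the same route as the paper's: both arguments anchor on two distinct sets $S_{1}=C\cup\{a\}$, $S_{2}=C\cup\{b\}$ and classify the family by how a third set sits relative to them, with your dichotomy (``does every set contain $C=S_{1}\cap S_{2}$?'') being the intersection-side dual of the paper's (``is every set contained in $S_{1}\cup S_{2}$?''), and with the same three-set comparison doing the work in the nontrivial branch. Your opening parity remark also correctly identifies that the hypothesis must be read as $\lvert S_{i}\,\Delta\, S_{j}\rvert\le 2$ (equivalently $\lvert S_{i}\setminus S_{j}\rvert\le 1$), which is the reading the paper's own proof uses.
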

See section \ref{sec:factorization} for the proof of the proposition \ref{prop: threekinds}. Note that some rectangular blocks can be both of the second kind and the third kind. One row tableau is either of the first kind, or of the second kind. The idea is to prove theorem \ref{theorem: FactorizationFormula} individually for rectangular blocks of each kind and then put everything together with the help of induction. The main tool is vertical Pieri formulas \ref{prop: Pieri}. 
\subsubsection{Acknowledgements} The authors would like to thank Siddhartha Sahi and Vadim Gorin for fruitful discussions connected to the subject of the paper.

\section{Unique semistandard tableau}
\label{sec:UST}
\begin{proof}[Proof of Proposition \ref{prop:LRvsKostka}]
	Consider a Littlewood-Richardson tableau $T$ of shape $\nu/\lambda$ and weight $\mu$. 
	For $1 \leq k \leq j \leq n$ let $\mu_{k}^{j}$ denote the number of entries equal to $k$ in the first $j$ rows of $T$. Then $\mu^{n}_{k} = \mu_{k}$.  Clearly, $\mu_{k}^{j+1} \geq \mu_{k}
	^{j}$ for $j \leq n-1$. Lattice word condition is equivalent to $\mu_{k+1}^{j+1} \leq \mu_{k}
	^{j}$ for $j \leq n-1$. $\sum_{k=1}^{j} \mu^{j}_{k} - \sum_{k=1}^{j-1} \mu^{j-1}_{k}$ is the total number of entries in the $j$-th row, so is $\nu_{j} - \lambda_{j}$. 
	So $\left\{\mu^{j}_{k}\right\}_{1 \leq k \leq j \leq n}$ is an interlacing Gelfand-Tsetlin array with top row $\mu$ and weight $\nu-\lambda$. It corresponds to a semistandard tableau with shape $\mu$ and weight $\nu-\lambda$. So we have constructed an injective map from $\{\text{Littlewood-Richardson tableau of shape $\nu/\lambda$ and weight $\mu$}\}$ to $\{\text{Semistandard tableau of shape $\mu$ and weight $\nu - \lambda$}\}$. Thus $c^{\nu}_{\lambda \mu} \leq K_{\mu, \nu-\lambda}$. This map in general is not a bijection, since there are additional constraints coming from columns of $T$ that we have not taken into account. However, if $\nu/\lambda$ is a horizontal strip, there are no such constraints. So in this case the map is a bijection and $c^{\nu}_{\lambda \mu} = K_{\mu, \nu-\lambda}$. 
\end{proof}
\begin{center}
	\begin{tabular}{ccc}	
		
		\noindent\begin{minipage}{0.3\linewidth}
			\centering	
			\ytableausetup{mathmode,boxsize=1.3em}
			{\tiny 	\begin{ytableau}
					\none&\none&\none&\none&\none&\none&\none&\none&\none&\none& *(NextBlue) 1&*(NextBlue) 1&*(NextBlue) 1&*(NextBlue) 1 \\
					\none&\none&\none&\none&\none&\none&*(NextBlue!50)1&*(NextBlue!50)2&*(NextBlue!50)2\\
					\none&\none&\none&*(NextBlue!20)2&*(NextBlue!20)2\\
					2&3&\none
			\end{ytableau}}
		\end{minipage}
		
		$\quad \iff$
		
		\noindent\begin{minipage}{0.3\linewidth}
			\centering	
			$${\footnotesize \begin{matrix}
					5&&5&&1&&0\\
					&5&&4&&0\\
					&&5&&2\\
					&&&4
			\end{matrix}}$$
		\end{minipage}
		
		$ \iff $
		
		\noindent\begin{minipage}{0.3\linewidth}
			\centering	
			\ytableausetup{mathmode,boxsize=1.3em}
			{\tiny
				\begin{ytableau}
					*(NextBlue)1&*(NextBlue)1&*(NextBlue)1&*(NextBlue)1&*(NextBlue!50)2\\
					*(NextBlue!50)2&*(NextBlue!50)2&*(NextBlue!20)3&*(NextBlue!20)3&4\\
					4
			\end{ytableau}}
		\end{minipage}
	\end{tabular}
	\renewcommand{\figurename}{Bijection}
	\captionof*{figure}{An example of the bijection when $ \lambda $ has $ 4 $ rows, $ \mu=(5,5,1) $, $ l=(4,3,2,2) $.}
\end{center}

		\begin{proof}[Proof of Proposition \ref{prop: UniqueSST}]
			\noindent(Notations: Let $ \lambda' $ denote the conjugate partition of a partition $ \lambda=(\lambda_1\ge\lambda_2\ge\cdots\ge\lambda_k>0) $, where $ k $ is the length of $ \lambda $. Let $ [\lambda_i'] $ denote the set of boxes in the $ i $-th column of $ \lambda' $. Since the column is filled with different integers, we identify the integers with their corresponding boxes in the same column. For example, if $ l $ is an integer appeared in the $ i $-th column, we write $ l\in[\lambda_i'] $. Let $ T(r,c) $ denote the box in the $ r $-th row and the $ c $-th column.) 
			
			Clearly, it is true if $ T $ has only one box. Proceed by doing induction on the number of boxes of $ T $ and assume that it holds for all integers less than $ n $.
			
			$(C_n)\implies (C_n'):$ Let $ l $ be the largest integer in $ T $. The boxes filled with $ l $ form a horizontal strip in $ T $. Let $ L $ denote this horizontal strip with $ \norm{L} $ boxes. It follows that $ \widetilde{T}:=T\setminus L $ is unique of its shape and weight.
			
			By induction, $ \widetilde{T} $ satisfies ($C_{n-\norm{L}}'$), we verify that $ T $ satisfies ($C_n'$). For any two columns in $ T $, there are seven cases, illustrated below, to consider:
			
			\vspace*{10pt}
			
			\begin{tabular}{cc}
				\noindent\begin{minipage}{.3\linewidth}		\centering	
					\ytableausetup{mathmode,boxsize=2em}
					\begin{ytableau}
						\space &\none[\dots] & \space \\
						\space &\none[\dots] & \space \\
						\none[\vdots] & \none[\vdots]& \none[\vdots] \\
						l &\none[\dots] & l \\
					\end{ytableau}
					\renewcommand{\figurename}{Case}
					\renewcommand{\thefigure}{1}
					\captionof{figure}{}
				\end{minipage} 
				
				\noindent\begin{minipage}{.3\linewidth}
					\centering
					\begin{ytableau}
						\space &\none[\dots] & \space \\
						\space &\none[\dots] & \space \\
						\none[\vdots] & \none[\vdots]
						& \none[\vdots] \\
						\space &\none[\dots] & l \\
					\end{ytableau}
					\renewcommand{\figurename}{Case}
					\renewcommand{\thefigure}{2}
					\captionof{figure}{}
				\end{minipage} 
				
				\noindent\begin{minipage}{.3\linewidth}
					\centering
					\begin{ytableau}
						\space &\none[\dots] & \space \\
						\space &\none[\dots] & \space \\
						\none[\vdots] & \none[\vdots] & \none[\vdots] \\
						\space &\none[\dots] & \space \\
					\end{ytableau}
					\renewcommand{\figurename}{Case}
					\renewcommand{\thefigure}{3}
					\captionof{figure}{}
				\end{minipage} 
				
			\end{tabular}
			
			\begin{center}
				\begin{tabular}{ccc}
					\noindent\begin{minipage}{.25\linewidth}
						\centering	
						\ytableausetup{mathmode,boxsize=2em}
						\begin{ytableau}
							\space &\none[\dots] & \space \\
							\space &\none[\dots] & \space \\
							\none[\vdots] & \none[\vdots] & \none[\vdots] \\
							\space &\none[\dots] & l \\
							\none[\vdots]& \none[\cdots]\\
							l &\none[\cdots]
						\end{ytableau}
						\renewcommand{\figurename}{Case}
						\renewcommand{\thefigure}{4}
						\captionof{figure}{}
					\end{minipage} 	
					
					\noindent\begin{minipage}{.25\linewidth}
						\centering	
						\ytableausetup{mathmode,boxsize=2em}
						\begin{ytableau}
							\space &\none[\dots] & \space \\
							\space &\none[\dots] & \space \\
							\none[\vdots] & \none[\vdots]
							& \none[\vdots] \\
							\space &\none[\dots] & \space \\
							\none[\vdots]& \none[\cdots]\\
							\space &\none[\cdots]
						\end{ytableau}
						\renewcommand{\figurename}{Case}
						\renewcommand{\thefigure}{5}
						\captionof{figure}{}
					\end{minipage} 
					
					\noindent\begin{minipage}{.25\linewidth}
						\centering	
						\ytableausetup{mathmode,boxsize=2em}
						\begin{ytableau}
							\space &\none[\dots] & \space \\
							\space &\none[\dots] & \space \\
							\none[\vdots] & \none[\vdots]
							& \none[\vdots] \\
							\space &\none[\dots] & \space \\
							\none[\vdots]& \none[\cdots]\\
							l &\none[\cdots]
						\end{ytableau}
						\renewcommand{\figurename}{Case}
						\renewcommand{\thefigure}{6}
						\captionof{figure}{}
					\end{minipage} 
					
					\noindent\begin{minipage}{.25\linewidth}
						\centering	
						\ytableausetup{mathmode,boxsize=2em}
						\begin{ytableau}
							\space &\none[\dots] & \space \\
							\space &\none[\dots] & \space \\
							\none[\vdots] & \none[\vdots]
							& \none[\vdots] \\
							\space &\none[\dots] & l \\
							\none[\vdots]& \none[\cdots]\\
							\space &\none[\cdots]
						\end{ytableau}
						\renewcommand{\figurename}{Case}
						\renewcommand{\thefigure}{7}
						\captionof{figure}{}
					\end{minipage} 
					
				\end{tabular}
			\end{center}
			
			Let $ [\lambda_i'] $ and $ [\lambda_j'] $ denote the column on the left and the column on the right, respectively, in the pictures above.
			
			\noindent \textit{Case 1}: $ \lambda_i'=\lambda_j' $, $ l\in[\lambda_i'] $, $ l\in[\lambda_j'] $. By induction, $ [\lambda_j']\setminus\{l\} $ is obtained from $ [\lambda_i']\setminus\{l\} $ by changing at most one value. It follows that the same is true for $ [\lambda_i'] $ and $ [\lambda_j'] $.
			
			\noindent \textit{Case 2}: $ \lambda_i'=\lambda_j' $, $ l\notin[\lambda_i'] $, $ l\in[\lambda_j'] $. By induction, $ [\lambda_j']\setminus\{l\} $ is a subset of $ [\lambda_i'] $. It follows that $ [\lambda_j'] $ is obtained from $ [\lambda_i'] $ by changing its unique element not in $ [\lambda_j']\setminus\{l\} $ to $ l $.
			
			\noindent \textit{Case 3}: $ \lambda_i'=\lambda_j' $, $ l\notin[\lambda_i'] $, $ l\notin[\lambda_j'] $. By induction, $ [\lambda_j'] $ is obtained from $ [\lambda_i'] $ by changing at most one value.
			
			\noindent \textit{Case 4}: $ \lambda_i'>\lambda_j' $, $ l\in[\lambda_i'] $, $ l\in[\lambda_j'] $. By induction, $ [\lambda_j']\setminus\{l\} $ is a subset of $ [\lambda_i']\setminus\{l\} $. It follows that $ [\lambda_j'] $ is a subset of $ [\lambda_i'] $.
			
			\noindent \textit{Case 5}: $ \lambda_i'>\lambda_j' $, $ l\notin[\lambda_i'] $, $ l\notin[\lambda_j'] $. By induction, $ [\lambda_j'] $ is a subset of $ [\lambda_i']$.
			
			\noindent \textit{Case 6}: $ \lambda_i'>\lambda_j' $, $ l\in[\lambda_i'] $, $ l\notin[\lambda_j'] $. If $ \lambda_i'>\lambda_j'+1 $, then by induction, $ [\lambda_j'] $ is a subset of $ [\lambda_i']\setminus\{l\} $. It follows that $ [\lambda_j'] $ is a subset of $ [\lambda_i'] $.
			
			If $ \lambda_i'=\lambda_j'+1 $, then by induction, there are two possibilities: $ [\lambda_j']=[\lambda_i']\setminus\{l\} $ or $ [\lambda_j'] $ is obtained from $ [\lambda_i']\setminus\{l\} $ by changing one value. If $ [\lambda_j']=[\lambda_i']\setminus\{l\} $, then $ [\lambda_j'] $ is a subset of $ [\lambda_i'] $. We show that this is the only possibility.
			
			Otherwise, suppose that $ \lambda_i'=\lambda_j'+1 $ and $ [\lambda_j']\ne[\lambda_i']\setminus\{l\} $ for some $ i<j $. We first look at an example:
			
			\vspace*{10pt}
			
			\begin{center}
				\begin{tabular}{cc}
					
					\noindent\begin{minipage}{0.5\linewidth}
						\centering	
						\ytableausetup{mathmode,boxsize=2em}
						\begin{ytableau}
							\none[1] & \none[2] & \none[3] & \none[4] & \none[5] & \none[6] & \none[7] & \none[8] \\
							\color{red}1 &\color{red}1 & \color{red}1 &\color{red}1 &\color{red}1 &\color{red}1 &\color{yellow}2 &\color{violet}6\\
							\color{yellow}2 &\color{yellow}2 & \color{yellow}2 &\color{yellow}2 &\color{yellow}2 &\color{blue}3 & \color{blue}3 \\
							\color{blue}3 & \color{blue}3 &\color{blue}3 &\color{blue}3 &\color{green}4 &\color{green}4 &\color{green}4 \\
							\color{green}4 &\color{green}4 &\color{green}4 &\color{orange}5 &\color{orange}5 &\color{orange}5 &\color{orange}\circled 5 \\
							\color{orange}5 &\color{violet}\circled 6 &\color{violet}6 &\color{violet}6 &\color{violet}6 &\color{violet}6 &\color{violet}\circled 6\\
							\color{violet} 6 &\color{brown}\circled 7 &\color{brown}\circled 7\\
							\color{brown}7
						\end{ytableau}
						\setcounter{figure}{0}   
						\renewcommand{\figurename}{Tableau}
						\captionof{figure}{}
					\end{minipage} 
					
					$\rightarrow$
					
					\noindent\begin{minipage}{0.5\linewidth}
						\centering	
						\ytableausetup{mathmode,boxsize=2em}
						\begin{ytableau}
							\none[1] & \none[2] & \none[3] & \none[4] & \none[5] & \none[6] & \none[7] & \none[8] \\
							\color{red}1 &\color{red}1 & \color{red}1 &\color{red}1 &\color{red}1 &\color{red}1 &\color{yellow}2 &\color{violet}6\\
							\color{yellow}2 &\color{yellow}2 & \color{yellow}2 &\color{yellow}2 &\color{yellow}2 &\color{blue}3 & \color{blue}3 \\
							\color{blue}3 & \color{blue}3 &\color{blue}3 &\color{blue}3 &\color{green}4 &\color{green}4 &\color{green}4 \\
							\color{green}4 &\color{green}4 &\color{green}4 &\color{orange}5 &\color{orange}5 &\color{orange}5 &\color{violet}\circled 6 \\
							\color{orange}5 &\color{orange}\circled 5 &\color{violet}6 &\color{violet}6 &\color{violet}6 &\color{violet}6 &\color{brown}\circled 7\\
							\color{violet}6 &\color{violet}\circled 6 &\color{brown}\circled 7\\
							\color{brown}7
						\end{ytableau}
						\renewcommand{\figurename}{Tableau}
						\captionof{figure}{}
					\end{minipage} 
				\end{tabular}
			\end{center}
			
			It is clear that the above tableaux have the same shape and weight. Also, note that the Tableau 1 restricted to the shape $ (8,7,7,7,7,1,0) $ satisfies $ C_{8+7+7+7+7+1}' $.
			
			Let $ i=2 $, $ j=5 $, then $ \lambda_i'=6=\lambda_j'+1 $ and $ [\lambda_2']\setminus\{7\}\ne[\lambda_5'] $. Observe that moving the circled integers $(5,6,7,7,6)$ in Tableau 1 counterclockwise leads to Tableau 2. 
			
			To carry this out in general, let the $ s $-th column be the leftest column of length $ \lambda_i' $ such that $ T(\lambda_i',s)=l $. Let the $ S $-th column be the rightest column of length $ \lambda_j' $ such that $ T(\lambda_j',S)<l $. By our assumptions, both $ s $ and $ S $ exist. Moreover, $ s\le i $ and $ S\ge j $. Let the $ m $-th column be the rightest column of length $ \lambda_i' $. So, $ i\le m<j $. 
			
			If $ T(\lambda_j',s)<T(\lambda_j',S) $, then moving the integers 
			$$ (T(\lambda_j',S),\equalto{T(\lambda_i',s)}{l},\equalto{T(\lambda_i',s+1)}{l},...,\equalto{T(\lambda_i',m))}{l} $$
			in the tableau counterclockwise will generate a new tableau. One can easily check that we are simply interchanging $ T(\lambda_j',S) $ and $ T(\lambda_i',s)=l $ in this case. To see that this gives us a new tableau, it suffices to show that $ T(\lambda_i',s-1)\le T(\lambda_j',S) $ if $ s>1 $, as $ T(\lambda_j',S+1)=l $ if $ \lambda_{S+1}'=\lambda_{j}' $, ensures the validity of replacing $ T(\lambda_j',S) $ by $ l $. By induction, the definition of $ S $, and $  T(\lambda_j',s)<T(\lambda_j',S) $, we have $ [\lambda_S']\subsetneq[\lambda_{s-1}'] $, $ [\lambda_s']\setminus\{l\}\subsetneq[\lambda_{s-1}'] $ and $ \operatorname{Card}([\lambda_S']\cup([\lambda_s']\setminus\{l\}))\ge\lambda_i' $. However, $ \max\{[\lambda_S']\cup([\lambda_s']\setminus\{l\})\}=T(\lambda_j',S) $. It follows that $ T(\lambda_i',s-1)\le T(\lambda_j',S) $.
			
			In general, since $ [\lambda_j']\ne[\lambda_i']\setminus\{l\} $, there must exist a smallest $ h\in\{0,1,...,\lambda_j'-1\} $ such that $ T(\lambda_j'-h,s)<T(\lambda_j'-h,S) $. We have just shown that when $ h=0 $, doing the above counterclockwise shifting leads to a new tableau. For $ h>0 $, we will show that moving the integers:
			\begin{align*}
				( T(\lambda_j'-h,S), T(\lambda_j'-h+1,s), & T(\lambda_j'-h+2,s),..., T(\lambda_i',s),T(\lambda_i',s+1),...,\\
				& T(\lambda_i', m), T(\lambda_j',S),T(\lambda_j'-1,S),..., T(\lambda_j'-h+1, S) )
			\end{align*}
			in the tableau counterclockwise leads to a new tableau. 
			
			Note that $$ \{T(\lambda_i',s),T(\lambda_i',s+1),...,T(\lambda_i',m)\}=\{l\}, $$ it suffices to show that the new tableau is weakly increasing from the $ S $-th column to the $ (S+1) $-th column if there are more than $ S $ columns, and from the $ (s-1) $-th column to the $ s $-th column if $ s>1 $.

			By induction, $ [\lambda_{S+1}']\setminus\{l\}\subsetneq[\lambda_S'] $ if there are more than $ S $ columns. If $ \lambda_{S+1}'<\lambda_j'-h $, then we are done. Otherwise, we claim that $ T(\lambda_j'-h'+1,S)\le T(\lambda_j'-h',S+1) $ for all $ \max\{\lambda_j'-\lambda_{S+1}',1\}\le h'\le h $. Indeed, by induction, $ [\lambda_{S+1}']\setminus\{l\}\subsetneq[\lambda_S'] $, $ [\lambda_{S+1}']\setminus\{l\}\subsetneq[\lambda_{s}']\setminus\{l\} $, and by our assumption, $ [\lambda_{S}']\ne[\lambda_s']\setminus\{l\} $, and by the definition of $ h $, we have $ T(\lambda_j'-h,S)\notin[\lambda_s'] $ and $ T(\lambda_j'-h,S)>\max\{T(1,s),T(2,s),...,T(\lambda_j'-h,s)\} $. So, $ T(\lambda_j'-h,S)<T(\lambda_j'-h,S+1) $ and $ T(\lambda_j'-h,S+1)\in[\lambda_S']\cup\{l\} $. It follows that $ T(\lambda_j'-h+1,S)\le T(\lambda_j'-h,S+1) $. Since $ T(\lambda_j'-h+1,S+1)>T(\lambda_j'-h,S+1) $ and $ T(\lambda_j'-h+1,S+1)\in[\lambda_S']\cup\{l\} $, it follows that $ T(\lambda_j'-h+2,S)\le T(\lambda_j'-h+1,S+1) $. Similarly, $ T(\lambda_j'-h'+1,S)\le T(\lambda_j'-h',S+1) $ for all $ \max\{\lambda_j'-\lambda_{S+1}',1\}\le h'\le h $. And we have shown that the new tableau is indeed weakly increasing from the $ S $-th column to the $ (S+1) $-th column.
			
			Moreover, by definition we have $$ T(\lambda_j'-h,s)<T(\lambda_j'-h,S)<T(\lambda_j'-h+1,S)=T(\lambda_j'-h+1,s), $$ and the fact that $ [\lambda_S']\subsetneq[\lambda_{s-1}'] $, $ [\lambda_s']\setminus\{l\}\subsetneq[\lambda_{s-1}'] $, it follows that $ T(\lambda_j'-h+1,s-1)\le T(\lambda_j'-h,S) $. Similar to our argument above, one can see that $ T(\lambda_j'-h+h',s-1)\le T(\lambda_j'-h+h'-1,s) $ for all $ 2\le h'\le h+1 $. Hence, we have shown that the new tableau is weakly increasing from the $ (s-1) $-th column to the $ s $-th column.
			
			\noindent \textit{Case 7}: $ \lambda_i'>\lambda_j' $, $ l\notin[\lambda_i'] $, $ l\in[\lambda_j'] $. We show that this is impossible.
			
			Suppose not, by induction, we know that $ [\lambda_j']\setminus\{l\} \subset [\lambda_i'] $. Write $ [\lambda_j']=(c_1<c_2<...<c_m<l) $ and write $ [\lambda_i']\setminus [\lambda_j']=(x_1<x_2<...<x_{m'}) $, $m'>0 $. 
			
			We first study the case where $ j=i+1 $. If $ x_{m'}>c_m $, then interchanging $ x_{m'} $ and $ l $ gives a new tableau. Contradiction.
			
			Otherwise, $ x_{m'}<c_m $. If we take out $ x_{m'} $ from $ [\lambda_i'] $ with other integers in $ [\lambda_i'] $ undisturbed, move every integer below the previous $ x_{m'} $ one level up, move the $ l $ in $ [\lambda_j'] $ to the last box of $ [\lambda_i'] $, and finally insert $ x_{m'} $ into an appropriate position in $ [\lambda_j'] $. One can easily check that this new arrangement is compatible with the remaining tableau. And we have a new tableau. Contradiction.
			
			\begin{center}
				\begin{tabular}{cc}					
					\noindent\begin{minipage}{0.5\linewidth}
						\centering	
						\ytableausetup{mathmode,boxsize=2em}
						\begin{ytableau}
							\color{red}1 & \color{yellow}2\\
							\color{yellow}2 & \color{green}4\\
							\color{blue}3 & \color{violet}\circled 6\\
							\color{green}4 & \color{brown}\circled 7\\
							\color{orange}\circled 5\\
							\color{violet}\circled 6
						\end{ytableau}
						\renewcommand{\figurename}{Tableau}
						\captionof*{figure}{An example where $ j=2=i+1 $, $ \{c_1,...,c_m\}=\{2,4,6\} $, $ \{x_1,...,x_{m'}\}=\{1,3,5\} $, $ l=7 $. Note that the $ 6 $ in the first column is three levels below the $ 6 $ in the second column. In general, $ x_{m'} $ is at least two levels below the least integer greater than $ x_{m'} $ in the second column.}
					\end{minipage}
					$\rightarrow$
					
					\noindent\begin{minipage}{0.4\linewidth}
						\centering	
						\vspace*{-42pt}\ytableausetup{mathmode,boxsize=2em}
						\begin{ytableau}
							\color{red}1 & \color{yellow}2\\
							\color{yellow}2 & \color{green}4\\
							\color{blue}3 & \color{orange}\circled 5\\
							\color{green}4 & \color{violet}\circled 6\\
							\color{violet}\circled 6\\
							\color{brown}\circled 7
						\end{ytableau}
						\renewcommand{\figurename}{Tableau}
						\captionof*{figure}{This tableau is obtained from the left by moving the circled integers $ (5,6,7,6) $ clockwise. }
					\end{minipage}
				\end{tabular}
			\end{center}
			
			Now we study the case where $ j>i+1 $. Proceed by doing induction on the distance $ (j-i) $ of two columns. Suppose we have shown that Case 7 is impossible for all pairs of $ (i,j) $ such that $ 0<j-i<q $ in any tableau of $ n $ boxes, then we can assume without loss of generality that the cardinality of the set $ \{\lambda_i',\lambda_{i+1}',...,\lambda_j'\} $ is two, the $ j $-th column is the leftest column of length $ \lambda_{j}' $ containing $ l $, and every column of length $ \lambda_i' $ to the right of the $ i $-th column contains $ l $. Again, we look at an example:
			
			\vspace*{10pt}
			
			\begin{center}
				\begin{tabular}{cc}
					
					\noindent\begin{minipage}{0.5\linewidth}
						\centering	
						\ytableausetup{mathmode,boxsize=2em}
						\begin{ytableau}
							\none[1] & \none[2] & \none[3] & \none[4]\\
							\color{red}1 &\color{yellow}2 &\color{blue}3 & \color{blue}3  \\
							\color{yellow}2 &\color{blue}3 &\color{green}4 &\color{orange}\circled 5\\
							\color{blue}3 &\color{green}4  &\color{orange} 5 &\color{violet}\circled 6\\
							\color{green}\circled 4 &\color{orange} 5 \\
							\color{orange}\circled 5 &\color{violet} 6 
						\end{ytableau}
						\renewcommand{\figurename}{Tableau}
						\captionof{figure}{}
					\end{minipage} 
					
					$\rightarrow$
					
					\noindent\begin{minipage}{0.5\linewidth}
						\centering	
						\ytableausetup{mathmode,boxsize=2em}
						\begin{ytableau}
							\none[1] & \none[2] & \none[3] & \none[4]\\
							\color{red}1 &\color{yellow}2 &\color{blue}3 & \color{blue}3  \\
							\color{yellow}2 &\color{blue}3 &\color{green}4 &\color{green}\circled 4\\
							\color{blue}3 &\color{green}4  &\color{orange} 5 &\color{orange}\circled 5\\
							\color{orange}\circled 5 &\color{orange} 5 \\
							\color{violet}\circled 6 &\color{violet} 6 
						\end{ytableau}
						\renewcommand{\figurename}{Tableau}
						\captionof{figure}{}
					\end{minipage} 
				\end{tabular}
			\end{center}
			
			In the above tableaux, $ i=1, j=4 $ and $ l=6 $. Every column to the right of column $ 1 $ is a subset of $ [\lambda_1']\cup\{6\} $. Tableau 4 is obtained from Tableau 3 by moving the circled integers $ (4,5,6,5) $ clockwise.
			
			To carry this out in general, if $ T(\lambda_i',i)>T(\lambda_j'-1,j) $, then interchange $ T(\lambda_j',j)=l $ and $ T(\lambda_i',i) $. By induction, the result is still a tableau. Contradiction. 
			
			Otherwise, set $ T(0,j)=0 $ and let $ h $ be the smallest integer such that $ T(\lambda_j'-h,j)<T(\lambda_i'-h+1,i) $. Then we move the integers: $$ (T(\lambda_j'-h+1,i),T(\lambda_j'-(h-1)+1,i),...,T(\lambda_j',i), T(\lambda_j',j),T(\lambda_j'-1,j),...,T(\lambda_j'-h+1,j))  $$ in the tableau clockwise. By induction, $ [\lambda_q']\setminus\{l\}\subsetneq [\lambda_{i+1}'] $ for all $ q\ge j $. It follows that our construction gives another tableau. Contradiction.
			
			Hence, Case 7 is impossible and we have proved the necessity.
			
			$(C_n')\implies (C_n):$ Prove by contradiction. Suppose that there exists a tableau $ T $ satisfying $ (C_n') $ but there is another tableau $ T'\ne T $ with the same shape $ \lambda $ and weight. 
			
			Group the columns of $ \lambda $ in terms of their lengths, say $ \lambda_1'=\cdots=\lambda_{g_1}'>\lambda_{g_1+1}'=\cdots=\lambda_{g_1+g_2}'>\cdots>\lambda_{g_1+\dots+g_{d-1}}'>\lambda_{g_1+\dots+g_{d-1}+1}'=\cdots=\lambda_{g_1+\dots+g_{d}}'=\lambda_{\lambda_1}' $. Consider the rectangular tableau $ T|_{(d)} $ formed by the last $ g_d $ columns of $ T $. Let $ \mu_{(d)} $ be the weight of $ T|_{(d)} $. We claim that $ \mu_{(d)} $ is also the weight of $ T'|_{(d)} $.
			
			Suppose not, there must exist an integer $ z $ that appears in $ T|_{(d)} $ more often than it does in $ T'|_{(d)} $, as $ T|_{(d)} $ and $ T'|_{(d)} $ have the same shape. Since $ T $ satisfies $ (C_n') $, $ z $ also appears in the first $ \sum_{i=1}^{d-1}g_i $ columns. But there are at most $ \sum_{i=1}^{d-1}g_i $ $ z $'s in the first $ \sum_{i=1}^{d-1}g_i $ columns of $ T' $, since $ T' $ is also a tableau. Contradiction. We conclude that $ T|_{(d)} $ and $ T'|_{(d)} $ have the same weight $ \mu_{(d)} $.
			
			Similarly, we conclude that the $ i $-th rectangular tableaux $ T|_{(i)} $ and $ T'|_{(i)} $ have the same weight for all $ i=1,\cdots, d $.  And we conclude from this fact that all the positions of $ l $ are fixed, since $ l $ can only appear in the last few boxes of the last row of each rectangular tableau. If we consider the tableau without all the $ l $'s, then it is trivial to check that $ (C_{n-\norm{L}}') $ is satisfied by this smaller tableau with $ (n-\norm{L}) $ boxes. It follows that this smaller tableau is unique of its shape and weight. So, $ T=T' $, contradiction. Therefore, $ T $ is uniquely determined.

\end{proof}

\section{Proving factorization}
\label{sec:factorization}

\subsection{Three kinds of rectangular tableaux}
\label{sub:threekinds}

	A rectangular tableau $ T $ with $ m $ columns and $ k $ rows is unique of its shape and weight if and only if one of the following is true: 
\begin{itemize}
	\item[(1)] All columns of $ T $ are identical.(first kind)
	\item[(2)] The intersection of all columns of $ T $ has cardinality $ (k-1) $.(second kind)
	\item[(3)] $ T $ has $ (k+1) $ distinct integers in total.(third kind)
\end{itemize}
This follows directly from proposition \ref{prop: UniqueSST} and proposition \ref{prop: threekinds} whose proof is the following. Once theorem \ref{FactorizationFormula} is proved for each of the three kinds of the rectangular tableaux, the general case follows from doing induction on the number of rectangular blocks in $ \mu $.

\begin{proof}[Proof of Proposition \ref{prop: threekinds}] 
Deleting one of the repeated sets doesn't change kind. So we can without loss of generality assume that $(S_{1}, S_{2}, \ldots, S_{m})$ are all distinct. If $m=1$, it is the case of the first kind. If $m =2$, it is the case of both the second kind and the third kind. Suppose $m \geq 3$. $|S_{1} \cap S_{2}| =k-1$ and $|S_{1} \cup S_{2}| =k+1$. There are two cases: $S_{3} \not \subset S_{1} \cup S_{2}$ and $S_{3} \subset S_{1} \cup S_{2}$. Suppose $S_{3} \not \subset S_{1} \cup S_{2}$. Then $S_{1} \cap S_{2} \cap S_{3} =  S_{1} \backslash \{a\} = S_{2} \backslash \{b\} = S_{3} \backslash \{c\}$ for some distinct $a, b, c$, so $|S_{1} \cap S_{2} \cap S_{3}| = k-1$. If any other $S_i$ was to miss an element of $S_{1} \cap S_{2} \cap S_{3}$, then it would have to contain all $a, b, c$ and would have size $\geq (k-2)+3 > k$. Contradiction. So in this case  $\displaystyle \biggr \lvert \bigcap_{i=1}^{m} S_{i} \biggr \rvert = |S_{1} \cap S_{2} \cap S_{3}| = k-1$, hence it is the case of the second kind. Suppose now that $S_{3} \subset S_{1} \cup S_{2}$. If there was any other $S_{i} \not \subset S_{1} \cup S_{2}$, then we would arrive to contradiction as in the previous case by switching $S_{3}$ and $S_{i}$. So in this case any $S_{i} \subset S_{1} \cup S_{2}$, hence $\displaystyle \biggr \lvert \bigcup_{i=1}^{m} S_{i} \biggr \rvert = |S_{1} \cup S_{2}| = k+1$, hence it is the case of the third kind. 
\end{proof}

\subsection{Pieri formulas}
\label{sub:Pieri} 
Recall that $e_{\eta}$ for a partition $\eta$ denotes $\displaystyle \prod_{i=1}^{\infty} e_{\eta_{i}}$. 
Partitions admit dominance partial order: $\chi \unrhd \rho$ if $\chi_{1} + \chi_{2} + \cdots + \chi_{i} \geq \rho_{1} + \rho_{2} + \cdots + \rho_{i}$ for any $i \geq 1$. $\chi \rhd \rho$ means that $\chi \unrhd \rho$ and $\chi \neq \rho$. Repeated application of the vertical Pieri formulas \eqref{VerticalPieri} leads to 
\begin{align*}
e_{\mu'} = P_{\mu} + \sum_{\eta: \ \eta' \rhd \mu'} c_{\eta} P_{\eta}
\end{align*}
for some coefficients $c_{\eta}$. Therefore, we can write
\begin{align*}
	P_{\mu}=e_{\mu'}+\sum_{\eta: \ \eta' \rhd \mu'} (-c_{\eta}) P_{\eta}.
\end{align*} 
However, for any $ \eta\rhd\mu' $ we have
\begin{align*}
	P_{\eta}=e_{\eta'}+\sum_{\zeta: \ \zeta' \rhd \eta'} (-c_{\zeta}) P_{\zeta},
\end{align*} 
we can rewrite
\begin{align*}
	P_{\mu}=e_{\mu'}+\sum_{\eta: \ \eta' \rhd \mu'} (-c_{\eta})\left[ e_{\eta'}+\sum_{\zeta: \ \zeta' \rhd \eta'} (-c_{\zeta}) P_{\zeta}\right].
\end{align*} 
Continue substituting as above until the right side becomes a linear combination of elementary symmetric functions after finite steps. This yields

\begin{align}
\label{elementaryexpansion}
P_{\mu} = e_{\mu'} + \sum_{\eta: \ \eta' \rhd \mu'} d_{\eta'} e_{\eta'}
\end{align}
for some other coefficients $d_{\eta'}$. We say that $f \sim_{\lambda, \nu} g$ for symmetric functions $f, g$ if $P_{\lambda}(f-g)$ doesn't have $P_{\nu}$ in its expansion in the Macdonald basis. It follows from the vertical Pieri formulas \ref{VerticalPieri} that $f \sim_{\lambda, \nu} f + e_{\eta}$ whenever $\eta_{1}$ is larger than the number of non-zero rows of $\nu \backslash \lambda$. 

\subsection{Rectangular tableaux with identical columns}
\label{sub:identical}
Let $ \mu(=(M^K)) $ be the rectangular partition with $ M $ columns and $ K $ rows. Consider the case where the tableau $ T $ is on shape $ \mu $ with identical columns.

By formula \ref{elementaryexpansion}, we have
\begin{align*}
	P_{(M^K)}=e_K^M+\sum_{\eta:\eta_1>K}c_\eta e_\eta,
\end{align*}
where $ c_\eta $ are coefficients. From now on, we use ``$ (j,k,m)\text{ ad} $" to represent ``$ (j,k,m)\text{ admissible} $", use $ a_{j,k}^m(\text{respectively }\ b_{j,k}^m) $ for $ a(j,k,m)(\text{respectively }\ b(j,k,m)) $ for brevity. It follows that 
\begin{align*}
	c_{\lambda (M^K)}^\nu(q,t)&=\prod_{(j,k,m)\text{ ad}}\left(\dfrac{\frac{1-q^{\lambda_j-\lambda_k-m}t^{k-j+1}}{1-q^{\lambda_j-\lambda_k-m+1}t^{k-j}}}{\frac{1-q^{\lambda_j-\lambda_k-m}t^{k-j}}{1-q^{\lambda_j-\lambda_k-m+1}t^{k-j-1}}}\right)
	.
\end{align*}
On the other hand, theorem \ref{theorem: FactorizationFormula} claims that

\begin{align*}
	\tilde{c}_{\lambda (M^K)}^\nu(q,t)&=\psi_{T}(q,t)\prod_{(j,k,m)\text{ ad}}\left(\frac{X_{k} - q^{-a_{j,k}^m+b_{j,k}^m-1} t X_{j}}{X_{k} - q^{-a_{j,k}^m+b_{j,k}^m} \
		X_{j}}  \cdot \frac{X_{k} - q^{-a_{j,k}^m}t^{-1} X_{j}}{X_{k} - \
		q^{-a_{j,k}^m-1}X_{j}} \right)\\
	&=\prod_{(j, k, m) \text{ ad}}\left(\frac{1-q^{-a_{j,k}^m+b_{j,k}^m-1+\lambda_j-\lambda_k}t^{1+k-j}}{1-q^{-a_{j,k}^m+b_{j,k}^m+\lambda_j-\lambda_k}t^{k-j}}\cdot\frac{1-q^{-a_{j,k}^m+\lambda_j-\lambda_k}t^{-1+k-j}}{1-q^{-a_{j,k}^m-1+\lambda_j-\lambda_k}t^{k-j}} \right)\\
	&=\prod_{(j, k, m) \text{ ad}}\left(\frac{1-q^{-m+\lambda_j-\lambda_k}t^{1+k-j}}{1-q^{-m+1+\lambda_j-\lambda_k}t^{k-j}}\cdot\frac{1-q^{-m+1+\lambda_j-\lambda_k}t^{-1+k-j}}{1-q^{-m+\lambda_j-\lambda_k}t^{k-j}} \right).
\end{align*}

Thus, $ \tilde{c}_{\lambda (M^K)}^\nu(q,t)=c_{\lambda (M^K)}^\nu(q,t) $. It follows that theorem \ref{theorem: FactorizationFormula} is true for all rectangular tableaux with identical columns.

\subsection{Horizontal strips}
\label{sub:horizontal}
Let $ T $ be a horizontal row of $ n $ boxes with weight $ (0<d_1\le d_2\le \cdots \le d_n) $. Equivalently, we can write the weight as $ (1^{e_1},...,N^{e_N}) $, $ e_N>0 $.

By the horizontal Pieri formula \ref{HorizontalPieri}, we have 
	\begin{align*}
	P_\lambda\cdot P_{(r)}=\sum_{\lambda\prec_h \nu,\ |\nu|-|\lambda|=r} \frac{(q; q)_{r}}{(t; q)_{r}}\varphi_{\nu/\lambda}P_\nu, 
\end{align*}
where $ \nu/\lambda $ is a skew shape of $ r $ boxes without any two boxes in the same column, i.e., a horizontal strip,  and
$$ \varphi_{\nu/\lambda}:=\prod_{s\in C_{\nu/\lambda}}\frac{b_\nu(s)}{b_{\lambda}(s)}, \quad (a; q)_{s}:= \prod_{i=1}^{s} (1-aq^{i-1}).$$	

Meanwhile,
\begin{align}
	\label{6}
	P_\lambda P_{(n)}=\sum_{\nu}c_{\lambda (n)}^\nu(q,t) P_\nu.
\end{align}	

Since $ P_\nu $'s form a basis, we conclude that $ c_{\lambda(n)}^\nu(q,t) $ is not zero only when $ P_\nu $ appears in \eqref{6} and
\begin{align*}
	c_{\lambda(n)}^\nu(q,t)=\frac{\prod_{i=0}^{n-1}(1-q^{i+1})}{\prod_{i=0}^{n-1}(1-tq^{i})}\varphi_{\nu/\lambda}.
\end{align*}

By the definition of $ \varphi_{\nu/\lambda} $, we compute the Macdonald Littlewood-Richardson coefficient corresponding to the original partition $ \lambda $ and the horizontal tableau $ T $ as follows
\begin{align*}
	c_{\lambda(n)}^\nu(q,t)=&\frac{(q;q)_n}{(t;q)_n}\cdot\left(\prod_{i=1}^{N}\prod_{j=0}^{e_i-1}\frac{1-q^{j}t}{1-q^{j+1}}\right) \\
	&\cdot\prod_{\substack{i=1\\d_i>1}}^n\prod_{j=1}^{d_i-1}\frac{1-q^{\lambda_j+b(j,d_i,i)-(\lambda_{d_i}+a(j,d_i,i))-1}t^{d_i-j+1}}{1-q^{\lambda_j+b(j,d_i,i)-(\lambda_{d_i}+a(j,d_i,i))}t^{d_i-j}}\cdot \frac{1-q^{\lambda_j-(\lambda_{d_i}+a(j,d_i,i))}t^{d_i-j-1}}{1-q^{\lambda_j-(\lambda_{d_i}+a(j,d_i,i))-1}t^{d_i-j}}\\
	=&\frac{(q;q)_n}{(t;q)_n}\cdot\left(\prod_{i=1}^{N}\prod_{j=0}^{e_i-1}\frac{1-q^{j}t}{1-q^{j+1}}\right)\cdot\prod_{(j,k,m)\text{ ad}}\left(\frac{X_{k} - q^{-a_{j,k}^m+b_{j,k}^m-1} t X_{j}}{X_{k} - q^{-a_{j,k}^m+b_{j,k}^m} \
		X_{j}}  \cdot \frac{X_{k} - q^{-a_{j,k}^m}t^{-1} X_{j}}{X_{k} - \
		q^{-a_{j,k}^m-1}X_{j}} \right),
\end{align*}
where in the last line we substitute $ k $ for $ d_i $ and $ m $ for $ i $.

In theorem \ref{theorem: FactorizationFormula}, it is claimed that

\begin{align*}
	c_{\lambda(n)}^\nu(q,t)&=\psi_{T}(q,t)\prod_{(j,k,m)\text{ ad}}\left(\frac{X_{k} - q^{-a_{j,k}^m+b_{j,k}^m-1} t X_{j}}{X_{k} - q^{-a_{j,k}^m+b_{j,k}^m} \
		X_{j}}  \cdot \frac{X_{k} - q^{-a_{j,k}^m}t^{-1} X_{j}}{X_{k} - \
		q^{-a_{j,k}^m-1}X_{j}} \right).
\end{align*}
It suffices to show that
\begin{align*}
	\psi_{T}(q,t)=\frac{(q;q)_n}{(t;q)_n}\cdot\left(\prod_{i=1}^{N}\prod_{j=0}^{e_i-1}\frac{1-q^{j}t}{1-q^{j+1}}\right)=\frac{(q;q)_n}{(t;q)_n}\cdot\prod_{i=1}^{N}\frac{(t;q)_{e_i}}{(q;q)_{e_i}}.
\end{align*}

Notice that
\begin{align*}
	\psi_{T}(q,t)&=\prod_{i=1}^{N-1}\frac[10pt]{\frac{(t;q)_{\sum_{j=1}^{i}e_j}}{(q;q)_{\sum_{j=1}^{i}e_j}}}{\frac[5pt]{\frac{(t;q)_{\sum_{j=1}^{i+1}e_j}}{(t;q)_{e_{i+1}}}}{\frac{(q;q)_{\sum_{j=1}^{i+1}e_j}}{(q;q)_{e_{i+1}}}}}=\prod_{i=1}^{N-1}\frac{(t;q)_{\sum_{j=1}^{i}e_j}}{(q;q)_{\sum_{j=1}^{i}e_j}}\cdot\frac{(t;q)_{e_{i+1}}(q;q)_{\sum_{j=1}^{i+1}e_j}}{(t;q)_{\sum_{j=1}^{i+1}e_j}(q;q)_{e_{i+1}}}.
\end{align*}
One can easily check that
\begin{align*}
	\prod_{i=1}^{N-1}\frac{(t;q)_{\sum_{j=1}^{i}e_j}}{(q;q)_{\sum_{j=1}^{i}e_j}}\cdot\frac{(t;q)_{e_{i+1}}(q;q)_{\sum_{j=1}^{i+1}e_j}}{(t;q)_{\sum_{j=1}^{i+1}e_j}(q;q)_{e_{i+1}}}=\frac{(q;q)_n}{(t;q)_n}\cdot\prod_{i=1}^{N}\frac{(t;q)_{e_i}}{(q;q)_{e_i}}.
\end{align*}
It follows that theorem \ref{theorem: FactorizationFormula} holds for all tableaux with a single row.

\subsection{Rectangular blocks of the second kind}
\label{sub:identical}
Now we study rectangular tableaux $ T $ with $ m $ columns and $ k $ rows, and the intersection of weights of all columns has cardinality $ (k-1) $. 

For example, in the following tableau, $ \{ 1,3 \} $ is the intersection of weights of all columns.

\begin{center}
	\begin{minipage}{0.4\linewidth}
		\centering	
		\ytableausetup{mathmode,boxsize=2em}
		\begin{ytableau}
			1&1&1&1\\
			2&2&3&3\\
			3&3&4&5
		\end{ytableau}
		\renewcommand{\figurename}{Tableau}
		\captionof*{figure}{A rectangular tableau with $ m=4 $, $ k=3 $}
	\end{minipage}
\end{center}

\begin{definition}[Intrinsic] Suppose that $ T $ is an arbitrary rectangular tableau. If a box of $ T $ contains an integer that appears in every column of $ T $, then delete it and move every box below it one level up. The resulting rectangular tableau is called the intrinsic tableau of $ T $ and we denote it by $ \text{Int} (T) $.
	
\end{definition}

\vspace*{10pt}

\noindent\textbf{Example.}

$$
\begin{tabular}{ccc}
	\noindent\begin{minipage}{.3\linewidth}
		\centering
		\begin{ytableau}
			1&1&2\\
			2&2&3\\
			3&4&4
		\end{ytableau}
		\renewcommand{\figurename}{Tableau}
		\captionof*{figure}{$T$}
	\end{minipage} 
	
	$\Rightarrow$
	
	\noindent\begin{minipage}{.3\linewidth}
		\centering
		\begin{ytableau}
			1&1&3\\
			3&4&4\\
		\end{ytableau}
		\renewcommand{\figurename}{Tableau}
		\captionof*{figure}{$\text{Int}(T)$}
	\end{minipage} 
\end{tabular}
$$

\begin{claim}\label{cla:one}
	The homogeneous degree $ m $ component of $ P_{(m^k)}(1,...,1,x_{k},...,x_{k+m-1}) $ is equal to $ P_{(m)}(x_k,...,x_{k+m-1}) $.
\end{claim}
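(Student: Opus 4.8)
The plan is to expand $P_{(m^k)}$ through the combinatorial formula $P_{(m^k)}(x)=\sum_T \psi_T(q,t)\,x^T$, where $T$ runs over semistandard tableaux of the rectangular shape $(m^k)$, and then to isolate the monomials that survive with degree $m$ after the specialization. Writing $x^T=\prod_i x_i^{n_i(T)}$ with $n_i(T)$ the number of $i$'s in $T$, setting $x_1=\cdots=x_{k-1}=1$ erases the exponents of the first $k-1$ variables, so the degree of the specialized monomial in $x_k,\dots,x_{k+m-1}$ equals the number of boxes of $T$ with entry $\ge k$. Since each of the $m$ columns of $(m^k)$ consists of $k$ strictly increasing entries, at most $k-1$ of them can lie in $\{1,\dots,k-1\}$, so every column contributes at least one box with entry $\ge k$; hence the minimal degree is exactly $m$, and the claimed degree $m$ component is precisely the lowest-degree part.

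Next I would pin down which tableaux contribute to this lowest-degree part. Degree exactly $m$ forces every column to contribute precisely one entry $\ge k$, which means the top $k-1$ entries of each column are forced to be $1,2,\dots,k-1$ and the bottom entry is some value in $\{k,\dots,k+m-1\}$. Thus the surviving tableaux are exactly those whose first $k-1$ rows are the constant rows $1,2,\dots,k-1$ and whose last row is an arbitrary weakly increasing word in $\{k,\dots,k+m-1\}$, i.e.\ a single-row semistandard tableau $T'$ of shape $(m)$ filled from $\{k,\dots,k+m-1\}$. After the specialization the monomial $x^T$ becomes precisely the monomial $x^{T'}$ of that last row, so $T\mapsto T'$ is a weight-preserving bijection onto single-row tableaux of shape $(m)$.

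The crux is therefore to show $\psi_T(q,t)=\psi_{T'}(q,t)$ for each contributing $T$. Reading $\psi_T$ through its Gelfand--Tsetlin chain $\varnothing=\mu^{(0)}\subset\mu^{(1)}\subset\cdots$, the steps $\mu^{(i)}/\mu^{(i-1)}$ for $1\le i\le k-1$ each add a full row of length $m$ to a rectangle; such a horizontal strip meets every column, so $R_{\mu^{(i)}/\mu^{(i-1)}}-C_{\mu^{(i)}/\mu^{(i-1)}}=\varnothing$ and the corresponding $\psi$-factor is $1$. The remaining steps, for $i\ge k$, add horizontal strips inside the last row on top of the fixed rectangle $(m^{k-1})$, and I would match these one by one with the steps of the single-row chain building $(m)$. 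Using the explicit $b_\lambda(i,j)$ from Proposition \ref{prop: Pieri}, a surviving box $(k,c)$ sitting below the full rectangle has column length $k$, so its $b$-value is $\frac{1-q^{r-c}t}{1-q^{r-c+1}}$, where $r$ is the current length of the last row; the box $(1,c)$ of the single-row shape of the same length $r$ has column length $1$ and the identical $b$-value. Since in both chains the surviving factors range over the columns $c=1,\dots,r$ of the row just before the strip is added, each rectangular step contributes the same $\psi$-factor as the matching single-row step, giving $\psi_T=\psi_{T'}$.

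Putting this together, the degree $m$ component equals $\sum_{T'}\psi_{T'}(q,t)\,x^{T'}=P_{(m)}(x_k,\dots,x_{k+m-1})$ by the same combinatorial formula applied to the single row. The only genuine work, and the step I expect to be the main obstacle, is the bookkeeping in the third paragraph: verifying that the full-row transitions drop out and that the $b$-values of the last-row transitions coincide exactly with those of the single-row transitions, uniformly in $q,t$. Everything else reduces to the elementary degree count and the rigidity of the contributing tableaux.
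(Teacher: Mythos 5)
Your proposal is correct and follows essentially the same route as the paper: specialize, use the degree count to force $T_1=\cdots=T_{k-1}=m$, and identify $\psi_T$ with the $\psi$-weight of the bottom row viewed as a one-row tableau. The only difference is that the paper dismisses the equality $\psi_T(q,t)=\psi_{\mathrm{Int}(T)}(q,t)$ as clear from the definition, whereas you verify it explicitly via the horizontal-strip factorization and the matching $b$-values; that computation is sound.
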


\begin{proof}
	Suppose that
	\begin{align*}
		P_{(m^k)}(x_1,...,x_{k+m-1})=\sum_{T}\psi_T(q,t)x_1^{T_1}\cdots x_{k+m-1}^{T_{k+m-1}},
	\end{align*}
	where $ T $ ranges over all semistandard tableaux on shape $ (m^k) $, and $ (1^{T_1},...,(k+m-1)^{T_{k+m-1}}) $ is the weight of tableau $ T $. Then we have
	\begin{align}
		P_{(m^k)}(1,...,1,x_{k},...,x_{k+m-1})=\sum_{T}\psi_T(q,t)x_k^{T_k}\cdots x_{k+m-1}^{T_{k+m-1}}.\label{homo m}
	\end{align}
	Note that $ \sum_{i=k}^{k+m-1}T_i=m $ implies $ \sum_{i=1}^{k-1}T_i=m(k-1) $. Moreover, since $ T_i\le m $ for all $ i $, we know that $ T_1=T_2=\cdots=T_{k-1}=m $ in order to have degree $ m $ in the right side of \eqref{homo m}. Hence, the intersection of weights of all columns of $ T $ corresponding to terms of degree $ m $ in \eqref{homo m}, which is $ \{1,2,...,k-1\} $, has cardinality $ (k-1) $. It follows that the intrinsic tableau $ \text{Int}(T) $ of $ T $ must be a horizontal strip with weight $ (k^{T_k},...,(k+m-1)^{T_{k+m-1}}) $. By the definition of $ \psi(q,t) $, it is clear that $ \psi_{T}(q,t)=\psi_{\text{Int}(T)}(q,t) $. Therefore, we conclude that the homogeneous degree $ m $ component of $ P_{(m^k)}(1,...,1,x_{k},...,x_{k+m-1}) $ is equal to $ P_{(m)}(x_k,...,x_{k+m-1}) $.
\end{proof}

Suppose that 
\begin{align*}
	P_{(m^k)}(x_1,...,x_{k+m-1})=\sum_{\mu}c_\mu e_\mu(x_1,...,x_{k+m-1})
\end{align*}
is the elementary symmetric polynomial expansion of $ P_{(m^k)}(x_1,...,x_{k+m-1}) $. 

\begin{claim}\label{cla:-(k-1)}
	For each $ \mu\vdash (m^k) $ such that the least monomial degree in $ e_\mu(1,...,1,x_k,...,x_{k+m-1}) $ is $ m $, $ c_\mu $ is equal to the coefficient of $ e_{\hat{\mu}} $ in the elementary symmetric polynomial expansion of $ P_{(m)}(x_1,...,x_m) $, where $ \hat{\mu}=(\mu_1-(k-1),...,\mu_{m}-(k-1)) $.
\end{claim}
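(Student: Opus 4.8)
The plan is to substitute $x_1 = \cdots = x_{k-1} = 1$ into the elementary expansion $P_{(m^k)} = \sum_\mu c_\mu e_\mu$ coming from \eqref{elementaryexpansion}, extract the homogeneous component of degree $m$ in the remaining variables $x_k, \ldots, x_{k+m-1}$, and match it against Claim \ref{cla:one}. First I would record that every index $\mu$ occurring in \eqref{elementaryexpansion} satisfies $\mu \unrhd (k^m)$ and $|\mu| = mk$; dominance over $(k^m)$ forces $\ell(\mu) \le m$, so I may always regard $\mu$ as having exactly $m$ (possibly zero) parts.

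Next I would analyze a single factor under the specialization. For $r \ge k-1$ the lowest-degree term of $e_r(1^{k-1}, x_k, \ldots, x_{k+m-1})$ in the genuine variables is obtained by using all $k-1$ ones, giving precisely $e_{r-(k-1)}(x_k, \ldots, x_{k+m-1})$ with coefficient $1$; for $r \le k-1$ the lowest term is the constant $\binom{k-1}{r}$. Hence the least monomial degree of $e_\mu(1^{k-1}, \ldots)$ equals $\sum_{i=1}^m \max(0,\, \mu_i - (k-1))$. Since $\max(0,x) \ge x$ and $\sum_i (\mu_i - (k-1)) = mk - m(k-1) = m$, this least degree is always $\ge m$, with equality if and only if $\mu_i \ge k-1$ for every $i$. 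In the equality case the degree-$m$ part of $e_\mu(1^{k-1}, \ldots)$ is forced to be the product of the individual minimal terms, namely $e_{\hat\mu}(x_k, \ldots, x_{k+m-1})$ with $\hat\mu = (\mu_1-(k-1), \ldots, \mu_m-(k-1)) \vdash m$, since any non-minimal choice in even one factor raises the total degree above $m$.

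Taking the degree-$m$ component on both sides then yields, by Claim \ref{cla:one},
\begin{align*}
P_{(m)}(x_k, \ldots, x_{k+m-1}) = \sum_{\mu:\, \mu_i \ge k-1 \ \forall i} c_\mu\, e_{\hat\mu}(x_k, \ldots, x_{k+m-1}),
\end{align*}
all indices of strictly larger least degree dropping out. I would then verify that $\mu \mapsto \hat\mu$ is a bijection from the admissible $\mu$ (exactly $m$ parts, each $\ge k-1$, with $|\mu|=mk$ and $\mu \unrhd (k^m)$) onto all partitions $\hat\mu \vdash m$: the inverse adds $k-1$ to each of $m$ parts, and the condition $\hat\mu \unrhd (1^m)$, automatic for any partition of $m$, translates back into $\mu \unrhd (k^m)$. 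Finally, the elementary symmetric polynomials $e_1, \ldots, e_m$ in the $m$ variables $x_k, \ldots, x_{k+m-1}$ are algebraically independent, so the $e_{\hat\mu}$ are linearly independent and the elementary expansion of $P_{(m)}$ is unique in these variables; comparing coefficients identifies $c_\mu$ with the coefficient of $e_{\hat\mu}$ in $P_{(m)}$, which is exactly the assertion.

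The main obstacle is the degree bookkeeping of the specialization: one must argue cleanly that when the least degree equals $m$, the degree-$m$ component receives a contribution from each surviving $e_\mu$ exactly once (the forced product of minimal terms, with total coefficient $1$) and from no other index, so that the surviving $\mu$ correspond bijectively and weight $1$ to the $e_{\hat\mu}$. Once this isolation-of-minimal-terms step is pinned down, the bijection and the linear-independence conclusion are routine.
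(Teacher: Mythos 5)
Your proposal is correct and follows essentially the same route as the paper: specialize $x_1=\cdots=x_{k-1}=1$, show the least degree of $e_\mu(1^{k-1},x_k,\ldots,x_{k+m-1})$ is $\sum_i\max(0,\mu_i-(k-1))\ge m$ with equality forcing $\mu_i\ge k-1$ and minimal component $e_{\hat\mu}$, then invoke Claim \ref{cla:one}, the bijection $\mu\mapsto\hat\mu$, and linear independence of the $e_{\hat\mu}$ to match coefficients. Your version merely spells out the degree bookkeeping (and the injectivity/independence step) more explicitly than the paper does.
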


\begin{proof}
	By formula \ref{elementaryexpansion}, we may assume that $ \mu=(\mu_1\ge\mu_2\ge\cdots \ge \mu_m\ge 0) $. If the least monomial degree in $ e_\mu(1,...,1,x_k,...,x_{k+m-1}) $ is $ m $, then we claim that $ \mu_m\ge k-1 $, since otherwise the least monomial degree in $ e_\mu(1,...,1,x_k,...,x_{k+m-1}) $ would be no less than $ mk-[(m-1)(k-1)+(k-2)]=m+1 $. Thus, $ \hat{\mu}=(\mu_1-(k-1),...,\mu_{m}-(k-1)) $ is a valid expression. Notice that the least homogeneous degree component in $ e_\mu(1,...,1,x_k,...,x_{k+m-1}) $ is exactly $ e_{\hat{\mu}}(x_k,...,x_{k+m-1}) $, which is of degree $ m $. For a $ \mu\vdash mk $ such that it contains a term of degree $ m $ in the elementary symmetric polynomial expansion of $ P_{(m)}(1,...,1,x_k,...,x_{k+m-1}) $ implies that the least monomial degree in $ e_\mu(1,...,1,x_k,...,x_{k+m-1}) $ is $ m $. Thus, by claim \ref{cla:one} we know that 
	\begin{align*}
		P_{(m)}(x_k,...,x_{k+m-1})=\sum_\mu c_\mu e_{\hat\mu}(x_k,...,x_{k+m-1}),
	\end{align*}
	where the summation ranges over $ \mu\vdash (m^k) $ such that the least monomial degree in $$ e_\mu(1,...,1,x_k,...,x_{k+m-1}) $$ is $ m $. Since the map that sends $ \mu $ to $ \hat{\mu} $ is a bijection between the set of partitions of $ (m^k) $ with exactly $ m $ parts and each part being no less than $ (k-1) $ and the set of partitions of $ m $, the claim follows once we change the variables $ x_i\mapsto x_{i-(k-1)} $, $ k\le i\le k+m-1 $.
\end{proof}

\noindent \textbf{Vertical Pieri's formulas as Quadruples}

Notice that if we multiply $ P_\lambda $ by a sequence of elementary symmetric polynomials and apply the vertical Pieri's formula \ref{VerticalPieri} repeatedly, then the desired coefficient in the final expansion is completely determined by a set of quadruples such as $ (i,j,q_i,q_j)^\lambda $, $ i<j $, representing the hook whose arm lies in the $ i $-th row and whose leg ends in the $ j $-th row. $ q_i(\text{respectively }\  q_j) $ is the number of boxes already attached to the $i(\text{respectively }\  j)$-th row of the original tableau, before the vertical strip to be attached(to the $j$-th row). Hence, we may use quadruples to represent the corresponding $ \psi' $'s.

Note that two quadruples $ (i,j,q_i,q_j)^\lambda $ and $ (I,J,q_I,q_J)^\lambda $ represent the same element in $$ \mathbb{Q}(q,t, X_1, X_2,...) $$ if and only if $ 0=i-I=j-J=(q_i-q_j)-(q_I-q_J) $. This can be easily verified by the definition of $ \psi' $. 

\begin{claim}\label{card k-1 claim}
	Theorem \ref{theorem: FactorizationFormula} holds for all rectangular $ T $ with $ m $ columns and $ k $ rows, and the intersection of weights of all columns has cardinality $ (k-1) $.
\end{claim}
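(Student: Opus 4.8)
The plan is to reduce the computation to the horizontal-strip case of subsection~\ref{sub:horizontal}, using the elementary expansion~\eqref{elementaryexpansion}, Claims~\ref{cla:one} and~\ref{cla:-(k-1)}, and the quadruple calculus for the vertical Pieri rule. Write $P_{(m^k)}=\sum_{\rho} c_\rho\,e_\rho$, so that the coefficient of $P_\nu$ in $P_\lambda P_{(m^k)}$ equals $\sum_\rho c_\rho\, g^\nu_{\lambda,\rho}$, where $g^\nu_{\lambda,\rho}$ is the coefficient of $P_\nu$ in the product of $P_\lambda$ with the elementary functions $e_{\rho_i}$, obtained by one application of~\eqref{VariableVerticalPieri} per factor. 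The first step is to observe that almost every $\rho$ contributes nothing. Since $T$ is of the second kind there are exactly $k-1$ \emph{core} values lying in every column, and each core value $v$ satisfies $\nu_v-\lambda_v=m$; as one vertical strip adds at most one box to a given row, producing $m$ boxes in each of the $k-1$ core rows forces $\rho$ to have exactly $m$ parts and forces every strip to meet all $k-1$ core rows, whence every part of $\rho$ is at least $k-1$. These are precisely the $\rho$ singled out in Claim~\ref{cla:-(k-1)}, for which $c_\rho=\tilde c_{\hat\rho}$ is the coefficient of $e_{\hat\rho}$ in the elementary expansion of $P_{(m)}$, with $\hat\rho=\rho-(k-1)^m$.

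Next I would classify the Pieri factors arising along a chain $\lambda=\mu^{(0)}\subset\cdots\subset\mu^{(m)}=\nu$ that realizes such a $\rho$. Because every strip covers all core rows, a core row is always extended and hence never appears as the unextended index in~\eqref{VariableVerticalPieri}; each factor is therefore of \emph{intrinsic} type (both rows non-core) or \emph{cross} type (the extended row a core value $\kappa$, the unextended row a smaller non-core value $j$). The core boxes are forced, so the boxes each strip places in core rows are completely determined, while its remaining $\hat\rho$ boxes fill non-core rows and reassemble the intrinsic horizontal strip $\mathrm{Int}(T)$; recall from the proof of Claim~\ref{cla:one} that $\psi_T=\psi_{\mathrm{Int}(T)}$. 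Weighted by the coefficients $\tilde c_{\hat\rho}$, the intrinsic factors run over exactly the data that computes the horizontal-strip coefficient for $\mathrm{Int}(T)$, so by subsection~\ref{sub:horizontal} they assemble into the factors of~\eqref{FactorizationFormula} attached to admissible triples $(j,\kappa,c)$ in which $\kappa$ is a distinctive value.

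The heart of the matching is the cross factors, which I would handle through the quadruple calculus. Labeling the $m$ strips by the columns of $T$, a cross factor at strip $c$ with core value $\kappa$ and non-core value $j<\kappa$ is recorded by a quadruple whose extended (leg) row is $\kappa$ and whose arm row is $j$. Since every earlier strip already extended the core row $\kappa$, its box count is forced to be $c-1$, while the box count at row $j$ is the number of earlier columns containing $j$. Translating through $X_i=q^{\lambda_i}t^{1-i}$, this is exactly the factor of~\eqref{FactorizationFormula} indexed by the admissible triple $(j,\kappa,c)$: here $b(j,\kappa,c)=0$ because every column contains the core value $\kappa$, and $a(j,\kappa,c)=(c-1)-q_j$ counts the earlier columns missing $j$, so the four exponents $-a+b-1,\,-a+b,\,-a,\,-a-1$ reproduce the shifts carried by the current variables $q^{c-1}X_\kappa$ and $q^{q_j}X_j$. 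Assembling the forced core factors, the intrinsic factors, and the cross factors and multiplying by $\psi_T=\psi_{\mathrm{Int}(T)}$ then recovers the full product of~\eqref{FactorizationFormula}, giving Theorem~\ref{theorem: FactorizationFormula} for second-kind blocks.

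I expect the main obstacle to be the passage from a single chain to the full sum $\sum_\rho \tilde c_{\hat\rho}\,g^\nu_{\lambda,\rho}$: a general chain may distribute the non-core boxes differently from the columns of $T$, so the cross factors are genuine weights living inside the sum rather than constants that factor out. The delicate point is to show, using the quadruple equivalence $0=i-I=j-J=(q_i-q_j)-(q_I-q_J)$ together with the already-established horizontal-strip identity, that summing over all admissible chains and all relevant $\rho$ collapses these weighted contributions to the single product indexed by the core-value admissible triples. Verifying this collapse — equivalently, that the computation descends to the horizontal-strip computation on the non-core rows while carrying the forced core and the cross factors correctly — is the technical core; the reduction of the first two paragraphs and the exponent bookkeeping of the third are the supporting, essentially routine, steps.
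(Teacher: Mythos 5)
Your architecture is the same as the paper's: restrict via \eqref{elementaryexpansion} and Claim \ref{cla:-(k-1)} to the partitions $\rho$ with $m$ parts all at least $k-1$, peel off the $k-1$ core rows, and reduce to the horizontal-strip case for $\text{Int}(T)$ while tracking the leftover Pieri factors through the quadruple calculus. But as written the argument has a hole exactly where you flag one, and the way you frame that hole points to a missing idea rather than a routine verification. The cross factors are \emph{not} ``genuine weights living inside the sum'': they are literally the same for every chain, and this is the whole point of the quadruple normalization. For a fixed pair $(i,\kappa)$ with $\kappa$ core and $i$ non-core, the steps at which $\kappa$ is extended and $i$ is not number exactly $m-\text{Int}(T)_i$, and at the $r$-th such step the difference $q_\kappa-q_i$ of attached-box counts equals the number of \emph{earlier} such steps, i.e.\ $r-1$ --- independently of how the chain distributes the non-core boxes among the strips. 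Hence the multiset of equivalence classes of cross quadruples is $\{(i,\kappa,0,q):0\le q\le m-\text{Int}(T)_i-1\}$ for every chain (this is the paper's $J_1$; similarly $J_2$ for $i\in L_0$, with $m$ in place of $m-\text{Int}(T)_i$), so the cross product factors out of the sum verbatim and matches the admissible triples with core second entry. No ``collapse'' of a weighted sum is needed; what you call the technical core is a short consequence of the equivalence $(i,j,q_i,q_j)\sim(i,j,0,q_j-q_i)$ that you already quote.

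Second, your claim that the intrinsic factors ``run over exactly the data that computes the horizontal-strip coefficient for $\text{Int}(T)$'' is off by a correction you never address. In the computation of $P_\lambda\cdot P_{(m)}$ attached to $\text{Int}(T)$, the core rows of $\lambda$ are ordinary rows that are never extended, so they occur as \emph{arm} rows paired with extended non-core rows below them; these produce the extra quadruples $K$ and, on the formula side, the admissible triples $Q$ of the horizontal-strip identity whose first entry is a core value. Neither has a counterpart in the $T$-computation (core rows are always extended there) nor in \eqref{FactorizationFormula} for $T$ (every column contains every core value, so no admissible triple of $T$ has core first entry). So the horizontal-strip identity only yields the common part of the product after you match $K$ with $Q$ and divide both out; asserting that the intrinsic factors alone reproduce the non-core portion of \eqref{FactorizationFormula} skips this cancellation. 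With these two points supplied, your outline becomes the paper's proof.
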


\begin{proof}
	Suppose that $ r_1<\cdots< r_{k-1} $ are the common integers throughout all columns in $ T $, and $ \text{Int}(T) $ has weight $ (1^{\text{Int}(T)_1},...,l^{\text{Int}(T)_l}) $, where $ l $ is the largest integer in $ T $. Define the sets $ L_{\text{Int}}:=\{ i\in [l]:\text{Int}(T)_i>0 \} $; $ L_r:=\{r_1,...,r_{k-1}\} $; $ L_{0}:=[l]\setminus(L_{\text{Int}}\cup L_r) $.
	
	For tableau $ T $, by extra quadruples $ (i,j,q_i,q_j) $ we mean quadruples satisfying $ \{i,j\}\nsubseteq L_{\text{Int}} $ and $ (i,j)\notin L_0\times L_{\text{Int}} $.
	
	When we are multiplying $ P_{\lambda} $ by $ P_{(m^k)} $, we can write $ P_{(m^k)} $ in its elementary symmetric function expansion. Similarly, we can do this for $ P_{(m)} $. By corollary \ref{cla:-(k-1)}, we are able to identify the common factors as well as comparing the unique factors in these two related processes of multiplying a sequence of elementary symmetric functions, or the two processes of attaching vertical strips. And it boils down to keeping track of the extra quadruples induced by $ e_\kappa $ from $ P_{(m^k)} $ and the corresponding $ e_{\hat{\kappa}} $ from $ P_{(m)} $, respectively since the quadruples are the same if the quadruples are not extra. Clearly, $ \sum_{i=1}^{l}\text{Int}(T)_i=m $. 
	
	For tableau $ T $, the extra quadruples $ (i,j,q_i,q_j) $ can be classified into the following disjoint classes
	\begin{itemize}
		\item[(1)] $ i\in L_{\text{Int}}, j\in L_r $:
		
		$$
		J_1:=\bigcup_{\substack{i\in L_{\text{Int}}\\ j\in L_r\\i<j}}\bigcup_{q_j=0}^{m-\text{Int}(T)_i-1}\{ (i,j,0,q_j) \}.
		$$
		
		\item[(2)] $ i\in L_0, j\in L_r $:
		
		$$
		J_2:=\bigcup_{\substack{i\in L_0\\ j\in L_r\\ i<j}}\bigcup_{q_j=0}^{m-1}\{ (i,j,0,q_j) \}.
		$$
	\end{itemize}
	
	For tableau $ \text{Int}(T) $, the extra quadruples $ (i,j,q_i,q_j) $ are:
	
	\begin{itemize}
		\item[(a)] $ i\in L_{r}, j\in L_{\text{Int}} $:
		$$
		K:=\bigcup_{\substack{i\in L_r\\
				j\in L_{\text{Int}}\\ i<j}}\bigcup_{q_j=0}^{\text{Int}(T)_j-1}\{ (i,j,0,q_j) \}.
		$$
	\end{itemize}
	
	Now we consider the extra admissible triples for tableau $ T $
	
	\begin{itemize}
		\item[(1')] $ i\in L_{\text{Int}}, j\in L_r $:
		$$
		L_1:=\bigcup_{\substack{i\in L_{\text{Int}}\\ j\in L_r\\ i<j}}\bigcup_{h\in [m]\setminus C(i)}\{ (i,j,h) \},
		$$
		where $ C(i) $ is the set of columns in $ T $ containing $ i $.
		
		\item[(2')] $ i\in L_0, j\in L_r $:
		$$
		L_2:=\bigcup_{\substack{i\in L_0\\
				j\in L_r\\ i<j}}\bigcup_{h=1}^{m}\{(i,j,h)\}.
		$$
		
	\end{itemize}
	
	Similarly, the extra admissible triples for tableau $ \text{Int}(T) $ are
	
	\begin{itemize}
		\item[(a')] $ i\in L_r, j\in L_{\text{Int}} $:
		$$
		Q_:=\bigcup_{\substack{i\in L_r\\
				j\in L_{\text{Int}}\\ i<j}}\bigcup_{h\in C(j)}\{ (i,j,h) \},
		$$
		where $ C(j) $ is the set of columns in $ \text{Int}(T) $ containing $ j $.
	\end{itemize}
	
	It is easy to see that $ J_1,J_2 $ and $ L_1,L_2 $ represent the same factors on the right side of \ref{theorem: FactorizationFormula}, respectively, and so do $ K $ and $ Q $. Since we have proved that theorem \ref{theorem: FactorizationFormula} holds for all horizontal strips, and $ \psi_T(q,t)=\psi_{\text{Int}(T)}(q,t) $, the claim follows.
\end{proof}

\subsection{Rectangular blocks of the third kind}
\label{sub:identical}
Consider the case where the rectangular tableau has $ m $ columns and $ k $ rows and there are $ (k+1) $ distinct integers in the tableau.

To get the Macdonald Littlewood-Richardson coefficient in this case, we adopt the idea of ``truncated" Macdonald polynomial expression: $ \hat P_{(m^k)}:=\sum_{\mu} c_{\mu'}e_{\mu'} $, where $ \mu' $ is the conjugate partition of $ \mu $, $ c_{\mu'} $ are coefficients and the summation ranges over all partitions of $ (m^k) $ with $ \mu_1'\le k+1 $, since all $ e_{\mu'} $'s with $ \mu_1'>k+1 $ will not contribute to the Macdonald Littlewood-Richardson coefficient in this case by vertical Pieri's formula \ref{VerticalPieri}. Formally, we do the following computation:

\vspace*{10pt}

If we change the variables $ x_i\mapsto y_{i}^{-1} $ for $ 1\le i\le k+1 $ and set $ x_i=0 $ for all $ i>k+1 $, then by formula \ref{elementaryexpansion} we have
\begin{align*}
	P_{(m^k)}(y_1^{-1},...,y_{k+1}^{-1})e_{k+1}^m(y_1,...,y_{k+1})&=\sum_\mu c_{\mu'}\prod_{j=1}^{\ell(\mu')}e_{k+1-\mu'_{j}}(y_1,...,y_{k+1})\\
	&=\sum_{\mu}c_{\mu'}e_{\tilde{\mu'}}(y_1,...,y_{k+1}),
\end{align*}
where $ l(\mu') $ is the number of parts in $ \mu' $, and $ \tilde{\mu'} $ denotes the set-theoretic complement of $ \mu' $ in $ (m^{k+1}) $.

So, we get 
\begin{align*}
	P_{(m^k)}(y_1,...,y_{k+1})=e_{k+1}^m(y_1,...,y_{k+1})\sum_{\mu}c_{\mu'}e_{\tilde{\mu'}}(y_1^{-1},...,y_{k+1}^{-1}).
\end{align*}

On the other hand, if we consider the monomial expansion of $ P_{(m^k)} $ and do the same change of variables as above, we get
\begin{align}
	P_{(m^k)}(y_1^{-1},...,y_{k+1}^{-1})e_{k+1}^m(y_1,...,y_{k+1})&=\sum_{T}\psi_T(q,t)y_{i_m}\cdots y_{i_1},\label{4}
\end{align}
where the summation ranges over all semi-standard tableaux on shape $ (m^k) $ and $ y_{i_j} $ ($ y_{i_m}\ge \cdots\ge y_{i_1} $) represents the missing element of column $ (m-j+1) $. 

\vspace*{10pt}

\begin{claim}\label{cla:psi}
	The right side of equation (\ref{4}) is equal to $ P_{(m)}(y_1,...,y_{k+1}) $. 
\end{claim}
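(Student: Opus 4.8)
The plan is to recognize that the left-hand side of \eqref{4} is literally $(y_1\cdots y_{k+1})^m\,P_{(m^k)}(y_1^{-1},\dots,y_{k+1}^{-1})$, since $e_{k+1}(y)=y_1\cdots y_{k+1}$. Thus the claim is exactly the statement that $(m)$ and $(m^k)$ are complementary partitions inside the $(k+1)\times m$ rectangle, i.e. the Macdonald complementation identity
$$P_{(m)}(y_1,\dots,y_{k+1})=(y_1\cdots y_{k+1})^{m}\,P_{(m^k)}(y_1^{-1},\dots,y_{k+1}^{-1}).$$
I would prove it in the combinatorial form already set up on the right of \eqref{4}, namely by a weight-preserving bijection between tableaux, and then reduce the coefficient-matching to the complementation symmetry of the Macdonald branching coefficients.

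First I would pin down the bijection. Because $T$ has shape $(m^k)$ but only the $k+1$ letters $1,\dots,k+1$ are available, every column of $T$ is strictly increasing of length $k$ and therefore omits exactly one letter; let $p_c$ be the letter omitted by column $c$. Comparing two adjacent columns row by row shows that the semistandard (weakly increasing along rows) condition holds precisely when $p_c\ge p_{c+1}$, so $T$ is determined by the weakly decreasing sequence $p_1\ge\cdots\ge p_m$, and conversely every such sequence in $\{1,\dots,k+1\}^m$ arises. Reversing it gives a weakly increasing sequence, i.e. a single-row semistandard tableau $S$ of shape $(m)$, and this is a bijection. The monomial attached to $T$ on the right of \eqref{4} is $\prod_c y_{p_c}$, which is exactly $y^{S}$; hence the right-hand side of \eqref{4} and $P_{(m)}(y)=\sum_S\psi_S(q,t)\,y^{S}$ are supported on the same monomials, and it remains only to prove $\psi_T(q,t)=\psi_S(q,t)$ for corresponding $T,S$.

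The coefficient identity $\psi_T=\psi_S$ is the crux. Writing $f(i):=\#\{c:p_c>i\}$, the Gelfand-Tsetlin chain of $T$ is $\mu^{(i)}=(m^{\,i-1},f(i))$, while that of $S$ is the single-row chain $\nu^{(i)}=(m-f(i))$; the key observation is that at every level $\nu^{(i)}$ is the complement of $\mu^{(i)}$ in the $i\times m$ box. I would then deduce $\psi_T=\psi_S$ from the complementation symmetry of Macdonald's horizontal-strip coefficients $\psi_{\lambda/\mu}$ (equivalently, from the polynomial complementation identity above, which itself follows from the fact that $f\mapsto(y_1\cdots y_n)^{M}f(y^{-1})$ sends $m_\lambda\mapsto m_{\lambda^c}$, reverses dominance, and preserves the $(q,t)$-inner product, so carries the monic orthogonal basis $\{P_\lambda\}_{\lambda\subseteq(M^n)}$ to $\{P_{\lambda^{c}}\}$). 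Alternatively one can verify $\psi_T=\psi_S$ by direct computation, multiplying out the explicit product formula for $\psi$ along the chain $\mu^{(\bullet)}$ and comparing with the single-row value $\psi_S=\frac{(q;q)_m}{(t;q)_m}\prod_v\frac{(t;q)_{e_v}}{(q;q)_{e_v}}$ already obtained in subsection \ref{sub:horizontal}, where $e_v$ is the number of columns omitting $v$. The main obstacle is precisely this step: the per-level factors $\psi_{\mu^{(i)}/\mu^{(i-1)}}$ and $\psi_{\nu^{(i)}/\nu^{(i-1)}}$ are not equal term by term, so the equality of the full products is a genuinely global fact that rests on the complementation symmetry rather than on any step-by-step cancellation.
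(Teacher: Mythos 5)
Your main route is correct and genuinely different from the paper's. The paper proves the claim by brute force: after relabelling so that the letter $t$ occurs only in rows $t-1$ or $t$ of $T$, it writes both $\psi_T$ and $\psi_{(y_{i_1},\ldots,y_{i_m})}$ as explicit products of atoms $\frac{1-q^xt}{1-q^{x+1}}$ indexed by arm lengths (equations (4.5)--(4.6)) and checks that the $j$-th factors agree after cancelling the overlap of the two ranges. You instead observe that the left side of (4.4) is $(y_1\cdots y_{k+1})^m P_{(m^k)}(y^{-1})$ and invoke the complementation identity $P_{\hat\lambda}(y)=(y_1\cdots y_n)^M P_\lambda(y^{-1})$ for $\lambda\subseteq(M^n)$, with $\widehat{(m^k)}=(m)$ inside $(m^{k+1})$. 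This is a cleaner and more conceptual argument, and it buys the coefficient identity $\psi_T=\psi_S$ for free; the paper's computation, on the other hand, is self-contained and produces the explicit product formulas that are reused later (e.g.\ in the proof of Proposition \ref{prop:integral}). Your combinatorial bijection $T\leftrightarrow(p_1\ge\cdots\ge p_m)$ and the verification that semistandardness is equivalent to $p_c\ge p_{c+1}$ are correct and match the paper's implicit setup.

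Two inaccuracies to fix. First, in your sketch of the complementation identity: the map $f\mapsto(y_1\cdots y_n)^Mf(y^{-1})$ \emph{preserves} dominance order on partitions contained in $(M^n)$, it does not reverse it (for $|\mu|=|\lambda|$ one has $\sum_{i\le j}\hat\mu_i-\sum_{i\le j}\hat\lambda_i=\sum_{i\le n-j}\mu_i-\sum_{i\le n-j}\lambda_i$). As written, your argument would give an expansion of the image of $P_{(m^k)}$ that is unitriangular in the \emph{wrong} direction, and triangularity plus orthogonality would not immediately pin it down; with the correct (order-preserving) statement the image is $m_{(m)}$ plus dominance-lower terms, and the standard uniqueness characterization of the $P_\lambda$'s applies directly. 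Second, your closing remark that the per-level factors $\psi_{\mu^{(i)}/\mu^{(i-1)}}$ and $\psi_{\nu^{(i)}/\nu^{(i-1)}}$ are not equal term by term, so that the direct computation would have to be ``global,'' is false: the paper's proof consists precisely of showing that the $j$-th factors of (4.5) and (4.6) coincide as rational functions (their ratio is $\bigl(0\sim(z_{j+1}-1)\bigr)\bigl(z_{j+1}\sim(Z+z_{j+1}-1)\bigr)\big/\bigl(0\sim(Z-1)\bigr)\bigl(Z\sim(Z+z_{j+1}-1)\bigr)=1$ with $Z=\sum_{l\le j}z_l$, since both numerator and denominator concatenate to $0\sim(Z+z_{j+1}-1)$). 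So the step-by-step cancellation you dismiss is exactly what the paper does. Neither slip invalidates your main argument, but the first one should be corrected before the complementation identity can be considered proved rather than merely quoted.
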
 

\begin{proof}
	It suffices to show that $ \psi_T(q,t) $ is equal to the coefficient of $ y_{i_m}\cdots y_{i_1} $ in $ P_{(m)} $.
	
	Recall that(\cite{Mac99})
	\begin{align*}
		P_{\lambda/\mu}=\sum_{T}\psi_T(q,t)x^T,\qquad \psi_T(q,t)=\prod_{i=1}^r\psi_{\lambda^{(i)}/\lambda^{(i-1)}}(q,t),	
	\end{align*}
	where $ \psi_{\lambda^{(i)}/\lambda^{(i-1)}} $ is the $ i $-th horizontal strip in $ T $. For a horizontal strip $ \lambda/\mu $, we have
	\begin{align*}
		\psi_{\lambda/\mu}=\prod_{s\in R_{\lambda/\mu}-C_{\lambda/\mu}}\frac{b_\mu(s)}{b_\lambda(s)},
	\end{align*}
	where 
	\begin{align*}
		b_\lambda(s)=\begin{cases}
			\dfrac{1-q^{a_\lambda(s)}t^{l_\lambda(s)+1}}{1-q^{a_\lambda(s)+1}t^{l_\lambda(s)}},& s\in\lambda;\\
			1,&\text{otherwise.}
		\end{cases}	
	\end{align*}
	
	\vspace*{20pt}
	
	\noindent
	\begin{center}
		\begin{minipage}{0.4\linewidth}
			\centering	
			\ytableausetup{mathmode,boxsize=2em}
			\begin{ytableau}
				1&1&1&2\\
				2&2&3&3\\
				3&3&4&4\\
				4&5&5&5\\
				6&6&6&6
			\end{ytableau}
			\renewcommand{\figurename}{Tableau}
			\captionof*{figure}{A rectangular tableau with $ m=4 $, $ k=5 $}
		\end{minipage}
	\end{center}
	
	By the definition of $ \psi_T(q,t) $, we can rename all distinct integers $ u_1<\cdots <u_h $ as $ 1<\cdots<h $ in $ T $ without changing the value of $ \psi_T(q,t) $. Particularly, in the case that we are considering, we may assume without loss of generality that an integer $ t $ can only appear either in the $ t $-th row or in the $ (t-1) $-th row. As a result, we only need to keep track of the number of boxes to the right of a particular box, since at each step of attaching a horizontal strip, there are no boxes below any box in the horizontal strip which implies that $ b_\lambda(s) $ is completely determined by the arm length.
	
	In the following computation, suppose that there are $ z_k $ $ y_{i_j} $'s such that $ y_{i_j}=k $. And we use the arm lengths to represent the corresponding $ b_\lambda(s) $.	Let $ ``a\sim b" $ denote
	$$ \prod_{a\le x\le b}\frac{1-q^{x}t}{1-q^{x+1}}.$$ 
	
	Compute that	
	\begin{align}
		\psi_T(q,t)=\prod_{j=1}^k\frac{0\sim (z_{j+1}-1)}{\sum_{l=1}^{j}z_l\sim (\sum_{l=1}^{j+1}z_l-1)},\label{8}
	\end{align}
	and
	\begin{align}
		\psi_{(y_{i_1},...,y_{i_m})}(q,t)=\prod_{j=1}^{k}\frac{0\sim(\sum_{l=1}^{j}z_l-1)}{z_{j+1}\sim(\sum_{l=1}^{j+1}z_l-1)}.\label{9}
	\end{align}
	
	We compare the corresponding terms in (\ref{8}) and (\ref{9}). If $ z_{j+1}\le \sum_{l=1}^jz_l-1 $, then canceling out the repeated terms in (\ref{9}), we get the corresponding term in (\ref{8}). Likewise, if $ z_{j+1}-1\ge\sum_{l=1}^{j}z_l $, then canceling out the repeated terms in (\ref{8}), we get the corresponding term in (\ref{9}). If neither happens, then $ \sum_{l=1}^{j}z_l=z_{j+1} $ and the corresponding terms in (\ref{8}) and (\ref{9}) are obviously equal.	
\end{proof}

\begin{corollary}\label{col:coeff}
	The coefficient of any $ e_\kappa $ in the expansion of $ P_{(m^k)}(y_1,...,y_{k+1}) $ is equal to the coefficient of the elementary polynomial of the set-theoretic complement shape of $ \kappa $ in the rectangle $ (m^{k+1}) $ in the expansion of $ P_{(m)}(y_1,...,y_{k+1}) $.
\end{corollary}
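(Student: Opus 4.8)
The plan is to obtain the corollary by equating the two descriptions of the product $P_{(m^k)}(y_1^{-1},\ldots,y_{k+1}^{-1})\,e_{k+1}^m(y_1,\ldots,y_{k+1})$ already assembled above. Starting from the truncated elementary expansion $P_{(m^k)}(y_1,\ldots,y_{k+1})=\sum_\mu c_{\mu'}e_{\mu'}(y_1,\ldots,y_{k+1})$ and substituting $x_i\mapsto y_i^{-1}$ produces $\sum_\mu c_{\mu'}e_{\tilde{\mu'}}(y_1,\ldots,y_{k+1})$, while Claim \ref{cla:psi} identifies the very same product, written as in \eqref{4}, with $P_{(m)}(y_1,\ldots,y_{k+1})$. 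Comparing the two yields $P_{(m)}(y_1,\ldots,y_{k+1})=\sum_\mu c_{\mu'}e_{\tilde{\mu'}}(y_1,\ldots,y_{k+1})$, and the corollary is precisely this identity read off coefficient by coefficient after setting $\kappa=\mu'$.

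The substitution step is the one computation I would record carefully. I would use the classical identity $e_r(y_1^{-1},\ldots,y_{k+1}^{-1})=e_{k+1-r}(y_1,\ldots,y_{k+1})/e_{k+1}(y_1,\ldots,y_{k+1})$ in $k+1$ variables. Each index $\mu'$ occurring in \eqref{elementaryexpansion} has largest part at most $k+1$ by the truncation $\mu_1'\le k+1$, and since the underlying partition is dominated by $(m^k)$ its largest part is at most $m$, so $\mu'$ has at most $m$ parts; padding to exactly $m$ parts with $e_0=1$ gives $e_{\mu'}(y^{-1})=e_{k+1}(y)^{-m}\prod_{j=1}^{m}e_{k+1-\mu_j'}(y)$. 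Multiplying through by $e_{k+1}^m(y)$ clears the denominator and leaves $\prod_{j=1}^m e_{k+1-\mu_j'}(y)=e_{\tilde{\mu'}}(y)$, where $\tilde{\mu'}$ is the column-wise complement of $\mu'$ in the $m\times(k+1)$ rectangle $(m^{k+1})$; this is where the passage from a shape to its set-theoretic complement enters.

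To turn the resulting polynomial identity into a statement about coefficients I would invoke uniqueness of the elementary expansion in finitely many variables: over $\mathbb{C}(q,t)$ the symmetric polynomials in $y_1,\ldots,y_{k+1}$ form the free polynomial algebra on $e_1,\ldots,e_{k+1}$, so the products $e_\kappa$ indexed by partitions with $\kappa_1\le k+1$ are linearly independent. Every index on both sides obeys this bound, and $\kappa\mapsto\tilde\kappa$ is an involutive bijection on partitions contained in $(m^{k+1})$; hence the coefficient $c_\kappa$ of $e_\kappa$ in $P_{(m^k)}$ equals the coefficient of $e_{\tilde\kappa}$ in $P_{(m)}$, which is the assertion.

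The genuinely delicate point, and the only one beyond bookkeeping, is this last transition to coefficients: one must check that restricting to $k+1$ variables introduces no linear relations among the $e_\kappa$ that occur. This is exactly why the truncation $\mu_1'\le k+1$ was imposed from the start, and it is what legitimizes equating the two expansions rather than treating the identity merely as one of polynomials. With that in hand, the complement bijection together with the substitution identity make the remainder routine.
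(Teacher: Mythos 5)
Your argument is correct and follows essentially the same route as the paper's: both combine the identity $P_{(m)}(y)=P_{(m^k)}(y^{-1})e_{k+1}^m(y)$ from Claim \ref{cla:psi} with the truncated elementary expansion of $P_{(m^k)}$, identify $e_{\mu'}(y^{-1})e_{k+1}^m(y)$ with $e_{\tilde{\mu'}}(y)$, and then match coefficients against the elementary expansion of $P_{(m)}$ using linear independence of the $e_\kappa$ with $\kappa_1\le k+1$ in $k+1$ variables. Your write-up merely makes explicit the classical identity $e_r(y^{-1})=e_{k+1-r}(y)/e_{k+1}(y)$ and the bound on the number of parts of $\mu'$, which the paper leaves implicit.
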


\begin{proof}
	In claim \ref{cla:psi}, we have proved that
	\[ P_{(m)}(y_1,...,y_{k+1})=P_{(m^k)}(y_1^{-1},...,y_{k+1}^{-1})e_{k+1}^m(y_1,...,y_{k+1}). \]
	With the same notations as claim \ref{cla:psi}, we have
	\begin{align}
		P_{(m)}(y_1,...,y_{k+1})&=\left(\sum_\mu c_{\mu'} e_{\mu'}(y_1^{-1},...,y_{k+1}^{-1})\right)\cdot e_{k+1}^m(y_1,...,y_{k+1})=\sum_\mu c_{\mu'}e_{\tilde{\mu'}}(y_1,...,y_{k+1}),\label{P(m)one}
	\end{align}
	where the summation is over all possible partitions that appear in the truncated elementary symmetric polynomial expansion of $ P_{(m^k)} $, and $ \tilde{\mu'} $ is the set-theoretic complement shape of $ \mu' $ in the rectangle $ (m^{k+1}) $. 
	
	On the other hand, we have the elementary symmetric polynomial expansion of $ P_{(m)}(y_1,...,y_{k+1}) $:
	\begin{align}
		P_{(m)}(y_1,...,y_{k+1})=\sum_{\mu}d_{\tilde{\mu'}} e_{\tilde{\mu'}}(y_1,..,y_{k+1}),\label{P(m)two}
	\end{align}
	where $ d_{\tilde{\mu'}} $ are coefficients, the summation ranges over all partitions $ \mu \vdash m\times k $ with $ \mu_1'\le k+1 $ due to the bijection between all such partitions and partitions of $ m $ given by taking the set-theoretic complement of each other in $ (m^{k+1}) $. Compare the coefficients of $ e_{\tilde{\mu'}} $ on right sides of \eqref{P(m)one} and \eqref{P(m)two}, we conclude that
	\begin{align*}
		d_{\tilde{\mu'}}=c_{\mu'},
	\end{align*}
	due to the fact that the set of elementary symmetric polynomials forms a basis of the space of symmetric polynomials. Thus, the corollary follows.		
\end{proof}

All rectangular tableaux $ T $ considered below are unique of its weight and shape with $ m $ columns and $ k $ rows.

\begin{definition}[Complement]
	If a rectangular tableau $ T $ has $ m $ columns and $ k $ rows with $ (k+1) $ distinct integers, then each column misses exactly one of the $ (k+1) $ integers. Suppose that $ l $ is the largest integer in $ T $, and the column $ j $ misses integer $ y_j $, $ 1\le j\le m $, then the horizontal tableau with weight $ (l+1-y_1,l+1-y_{2},...,l+1-y_m) $ is called the complement of $ T $. We denote it by $ T^C $.
\end{definition}

\begin{definition}[Reversal]
	Let $ \lambda=(\lambda_1,...,\lambda_n) $ be a partition. Suppose that a rectangular tableau $ T $ has $ m $ columns and $ k $ rows whose largest integer is $ l $. For any $ N>\lambda_1 $, define the reversal of $ \lambda $ relative to $ T $ and $ N $ to be $ \lambda^N(T)=(N-\lambda_l,...,N-\lambda_1) $.
\end{definition}

\begin{claim}\label{cla:c_reduce}
	If the $ (k+1) $ distinct integers in $ T $ are exactly: $ 1,2,...,k+1 $, then the Macdonald Littlewood-Richardson coefficients $ c(\lambda,T,q,t) $ and $ c(\lambda^N(T),T^C,q,t) $ are equal for any given $ N>\lambda_1 $.
\end{claim}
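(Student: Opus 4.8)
The plan is to use the complement/reversal correspondence to reduce this third-kind case to the horizontal-strip case settled in Subsection \ref{sub:horizontal}. Write $l=k+1$ for the largest integer of $T$, let $\sigma(i):=k+2-i$ be the order-reversing involution of the rows $\{1,\dots,k+1\}$ carrying $T$, and set $\bar\lambda:=\lambda^N(T)$, so that $\bar\lambda_i=N-\lambda_{\sigma(i)}$. I will expand $P_{(m^k)}=\sum_\kappa c_\kappa e_\kappa$, the truncated expansion coming from \eqref{elementaryexpansion} in which only $\kappa$ with all parts $\le k+1$ contribute to $\nu$, and likewise $P_{(m)}=\sum_{\tilde\kappa}d_{\tilde\kappa}e_{\tilde\kappa}$. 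With $\kappa$ padded to length $m$, Corollary \ref{col:coeff} gives $c_\kappa=d_{\tilde\kappa}$, where $\tilde\kappa$ is the complement of $\kappa$ in the rectangle $(m^{k+1})$; concretely $\tilde\kappa_i=(k+1)-\kappa_{m+1-i}$, so each factor $e_{\kappa_j}$ is matched with a factor $e_{(k+1)-\kappa_j}$. Since $P_\lambda\cdot P_{(m^k)}=\sum_\kappa c_\kappa\,(P_\lambda e_\kappa)$ and similarly for the reversed data, it suffices to prove, for each such $\kappa$, that the coefficient of $P_\nu$ in $P_\lambda e_\kappa$ equals the coefficient of $P_{\nu'}$ in $P_{\bar\lambda}e_{\tilde\kappa}$, where $\nu=\lambda+\chi(T)$ and $\nu'=\bar\lambda+\chi(T^C)$.

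The key algebraic input is an inversion identity for the variables $X_i=q^{\lambda_i}t^{1-i}$. Writing $\bar X_i=q^{\bar\lambda_i}t^{1-i}$ for the reversed partition, a one-line computation gives $\bar X_{\sigma(i)}=c_0/X_i$ with $c_0=q^N t^{-k}$, uniformly in $i$. I will expand the coefficient of $P_\nu$ in $P_\lambda e_\kappa$ as a sum over $m$-step chains $\lambda=\rho^{(0)}\subset\cdots\subset\rho^{(m)}=\nu$, where the $j$-th step adds a vertical $\kappa_j$-strip in a row set $R_j$ with $|R_j|=\kappa_j$ (an empty strip when $\kappa_j=0$), weighted by $\prod_{j}\psi'_{\rho^{(j)}/\rho^{(j-1)}}$ via \eqref{VariableVerticalPieri}. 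The matching reversed chain takes, at step $j$, the row set $R_j':=\sigma(\{1,\dots,k+1\}\setminus R_j)$ of size $(k+1)-\kappa_j$. The involution $\sigma$ carries each pair $(p,q)$ with $p<q$, $p\notin R_j$, $q\in R_j$ (precisely the pairs indexing \eqref{VariableVerticalPieri} for the original step) bijectively to the pair $(\sigma(q),\sigma(p))$ indexing the reversed factor; substituting $\bar X_{\sigma(p)}=C/X_p$ and $\bar X_{\sigma(q)}=C/X_q$ into \eqref{VariableVerticalPieri} and clearing the common constant $C$ shows each reversed factor equals its original counterpart. Hence the two $\psi'$-products agree step by step and the paired chains carry equal weight.

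Two bookkeeping points then finish the argument. First, the inversion relation persists along the chains up to an $i$-independent constant: original row $i$ grows at step $j$ iff $i\in R_j$, while reversed row $\sigma(i)$ grows iff $i\notin R_j$, so after $j$ steps $\bar X_{\sigma(i)}=c_0q^{\,j}/X_i$, and this drifting constant cancels in every Pieri factor exactly as above. Because the factors coincide, a chain carries nonzero weight on the original side precisely when its partner does on the reversed side---the numerator of \eqref{VariableVerticalPieri} vanishes exactly when an intermediate shape fails to be a partition---so the bijection needs no separate validity check. Second, the endpoints match: over the $m$ steps reversed row $\sigma(i)$ grows $m-\chi_i$ times (since original row $i$ grows $\chi_i$ times, every chain ending at $\nu$), and $m-\chi_i$ is exactly the number of columns of $T$ missing $i$, which by the definition of the complement equals $\chi(T^C)_{\sigma(i)}$; thus the reversed chains terminate at $\nu'=\bar\lambda+\chi(T^C)$. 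Summing over chains and over $\kappa$, and invoking $c_\kappa=d_{\tilde\kappa}$, yields $c(\lambda,T,q,t)=c(\lambda^N(T),T^C,q,t)$.

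I expect the principal difficulty to be keeping this indexing coherent throughout: tracking the reversal $\sigma$, the set-complement of the growing rows, and the drifting constant $c_0q^{\,j}$ simultaneously at every Pieri step, and verifying that the pair-index set of \eqref{VariableVerticalPieri} is genuinely preserved by $\sigma$. The cancellation that equates a reversed factor with the original one is a short computation, but it must hold in lockstep across all $m$ steps of every chain, and the identity $\nu'=\bar\lambda+\chi(T^C)$ has to be read off the complement construction rather than assumed.
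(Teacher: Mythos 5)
Your proposal follows essentially the same route as the paper's proof: both reduce via Corollary \ref{col:coeff} to matching, for each $e_{\kappa}$ versus $e_{\tilde\kappa}$, the vertical-Pieri factors of complementary-and-reversed strip sequences, and both verify the factor-by-factor equality through the same index reversal $i\mapsto k+2-i$ (the paper phrases this with its ``quadruples'' $(i,j,q_i,q_j)$ and an exponent computation, while you phrase it with the inversion identity $\bar X_{\sigma(i)}=c_0q^{\,j}/X_i$, but the underlying calculation is identical). The argument is correct.
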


\begin{proof}
	By corollary \ref{col:coeff}, it suffices to find for each partition $ \kappa $ of $ (m^k) $, a bijection between each summand in the coefficient of $ P_{\nu(T)} $, from the expansion of $ P_\lambda\cdot P_{(m^k)} $, originating from successively applying the vertical Pieri's formula \ref{VerticalPieri} to the sequence of elementary symmetric polynomials: $ e_{\kappa_1},e_{\kappa_2},...,e_{\kappa_m} $, and each summand in the coefficient of $ P_{\nu(T^C)} $, from the expansion of $ P_{\lambda^N(T)}\cdot P_{(m)} $, originating from successively applying the vertical Pieri's formula \ref{VerticalPieri} to the sequence of elementary symmetric polynomials: $ e_{k+1-\kappa_1},e_{k+1-\kappa_2},...,e_{k+1-\kappa_m} $.
	
	Suppose that at step $ s\in [m] $, the vertical strip with length $ \kappa_s $ is missing rows of $ \lambda $ with indices $ v_1<\cdots<v_{l-\kappa_s} $. Let $ V_s:=\{v_1,...,v_{l-\kappa_s}\} $. Let $ l(=k+1) $ be the largest integer in $ T $. The set of quadruples induced by this vertical strip is
	$$ \mathcal{S}_1:=\{ (i,j,q_i,q_j)^\lambda:i\in V_s, j\in [l]\setminus V_s,i<j \} .$$
	
	On the other hand, at step $ s\in [m] $, suppose that $ e_{k+1-\kappa_s} $ is not missing rows of $ \lambda^N(T) $ with indices $ l+1-v_{l-\kappa_s}<\cdots<l+1-v_1 $. Let $ W_s:=\{ l+1-v_{l-\kappa_s},...,l+1-v_1 \} $. Then the set of quadruples induced by this vertical strip is
	$$ \mathcal{S}_2:=\{ (I,J,p_I,p_J)^{\lambda^N(T)}: I\in [l]\setminus W_s, J\in W_s, I<J  \} .$$
	
	Since $ (I,J,p_I,p_J)^{\lambda^N(T)} $ represents the factor
	
	\begin{align*}
		\frac{1-q^{\lambda^N(T)_I+p_I-(\lambda^N(T)_J+p_J+1)}t^{J-I+1}}{1-q^{\lambda^N(T)_I+p_I-(\lambda^N(T)_J+p_J)}t^{J-I}}&=\frac{1-q^{N-\lambda_{l-I+1}+p_I-(N-\lambda_{l-J+1}+p_J+1)}t^{J-I+1}}{1-q^{N-\lambda_{l-I+1}+p_I-(N-\lambda_{l-J+1}+p_J)}t^{J-I}}\\
		&=\frac{1-q^{\lambda_{l-J+1}-\lambda_{l-I+1}+p_I-p_J-1}t^{J-I+1}}{1-q^{\lambda_{l-J+1}-\lambda_{l-I+1}+p_I-p_J}t^{J-I}}\\
		(\text{change the variables } J=l+1-i, I=l+1-j)\qquad&=\frac{1-q^{\lambda_{i}-\lambda_{j}+p_{l+1-j}-p_{l+1-i}-1}t^{j-i+1}}{1-q^{\lambda_{i}-\lambda_{j}+p_{l+1-j}-p_{l+1-i}}t^{j-i}}\\
		(p_{l+1-i}=s-q_i,p_{l+1-j}=s-q_j)\qquad&=\frac{1-q^{\lambda_{i}-\lambda_{j}+q_i-q_j-1}t^{j-i+1}}{1-q^{\lambda_{i}-\lambda_{j}+q_i-q_j}t^{j-i}},
	\end{align*}
	which is the same factor represented by the quadruple $ (i,j,q_i,q_j)^\lambda $. Hence there is a bijection between $ \mathcal{S}_1 $ and $ \mathcal{S}_2 $. Since we have assumed that the $ (k+1) $ distinct integers in $ T $ are exactly $ 1,2,...,k+1 $, $ \mathcal{S}_2 $ can be viewed as the set of quadruples induced by a summand in the elementary symmetric polynomial expansion of $ P_{(m)} $ in the specified way.
	
	\vspace*{10pt}
	
	\noindent\textit{Remark: The reason that this bijection only works when $ l=k+1 $ is that $ \sum_{i=1}^m(k+1-\kappa_s)=m $ when $ l=k+1 $. If $ l>k+1 $, then $ \sum_{i=1}^m(l-\kappa_s)>m $ and we are not able to claim that ``$ \mathcal{S}_2 $ can be viewed as the set of quadruples induced by a summand in the elementary symmetric polynomial expansion of $ P_{(m)} $", since there would be more than $ m $ add-ons attached to $ \lambda^N(T) $, but each $ e_{\eta'} $ in the elementary symmetric polynomial expansion of $ P_{(m)} $ satisfies $ \eta'\vdash m $.}
\end{proof}

\begin{corollary}
	Theorem \ref{theorem: FactorizationFormula} holds for all rectangular tableaux $ T $ with $ m $ columns and $ k $ rows with $ (k+1) $ distinct integers.
\end{corollary}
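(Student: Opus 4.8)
The plan is to reduce the third-kind case to the horizontal-strip case, which is already settled in subsection \ref{sub:horizontal}, by means of the complement and reversal operations prepared in Claim \ref{cla:c_reduce}. First I would treat the normalized situation in which the $(k+1)$ distinct integers of $T$ are precisely $1, 2, \ldots, k+1$. Here Claim \ref{cla:c_reduce} gives $c(\lambda, T, q, t) = c(\lambda^N(T), T^C, q, t)$ for every $N > \lambda_1$, with $T^C$ a single-row (horizontal) tableau. Since Theorem \ref{theorem: FactorizationFormula} is known for horizontal strips, the right-hand coefficient factors as $\psi_{T^C}(q,t)$ times the product of admissible factors for $T^C$ evaluated at the reversed variables $X_p^{\mathrm{rev}} = q^{\lambda^N(T)_p} t^{1-p}$. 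The goal is then to show that this product, re-expressed in the original variables $X_i = q^{\lambda_i} t^{1-i}$, is exactly the product over admissible triples of $T$ appearing in \eqref{FactorizationFormula}.

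The core verification is that the two factorizations agree term by term, which is the content of the bijection $\mathcal{S}_1 \leftrightarrow \mathcal{S}_2$ built inside the proof of Claim \ref{cla:c_reduce}: under the substitutions $J = l+1-i$, $I = l+1-j$ and $p_{l+1-i} = s - q_i$, the quadruple $(I, J, p_I, p_J)^{\lambda^N(T)}$ represents the same rational function as $(i, j, q_i, q_j)^\lambda$. I would upgrade this quadruple-level identity to the present setting by matching each admissible triple $(j, k, m)$ of $T$ with an admissible triple of $T^C$, checking that the counts $a(j, k, m)$ and $b(j, k, m)$ transform into the corresponding column counts for $T^C$ in the way that converts the two displayed hook factors of \eqref{FactorizationFormula} for $T$ into those for $T^C$. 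The scalar prefactors agree because $\psi_T(q,t) = \psi_{T^C}(q,t)$, which follows from Claim \ref{cla:psi} (the statement there that the right-hand side of (\ref{4}) equals $P_{(m)}$, together with Corollary \ref{col:coeff}). Combining these identifications yields Theorem \ref{theorem: FactorizationFormula} for $T$ in the normalized case.

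Finally I would remove the normalization, allowing the $(k+1)$ distinct integers to be an arbitrary subset $a_1 < \cdots < a_{k+1}$ of $\{1, \ldots, n\}$, and this is where I expect the main obstacle, precisely the point flagged in the Remark following Claim \ref{cla:c_reduce}: when the largest integer $l = a_{k+1}$ exceeds $k+1$ there are gap rows carrying no boxes, so $\sum_s (k+1 - \kappa_s) \neq m$ and the clean complement-reversal bijection onto the $P_{(m)}$-expansion no longer applies directly. The plan is to split the admissible triples of $T$ into those whose smaller index $j$ lies in the support $\{a_1, \ldots, a_{k+1}\}$ and those whose $j$ is a gap integer. For a gap integer one has $b(j, k, m) = 0$ and $a(j, k, m)$ equal to the number of earlier columns containing $k$, so the associated factors are of vertical-Pieri type; I would isolate them and match them against the vertical-Pieri factors generated when the boxes in rows $a_1, \ldots, a_{k+1}$ are threaded past the empty gap rows, leaving the remaining ``core'' factors, indexed by triples with both $j, k$ in the support, to be settled by the normalized case after the gaps are collapsed. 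The delicate work is to track how the counts $a, b$ and the index differences $k - j$ shift as gaps are collapsed, so that no factor is lost or double-counted; establishing this bookkeeping cleanly is the real difficulty of the step.
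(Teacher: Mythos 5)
Your treatment of the normalized case (entries exactly $1,\dots,k+1$) is the paper's: Claim \ref{cla:c_reduce} transfers the coefficient to $c(\lambda^N(T),T^C,q,t)$, the horizontal-strip case of subsection \ref{sub:horizontal} evaluates that, $\psi_T(q,t)=\psi_{T^C}(q,t)$ comes from Claim \ref{cla:psi}, and the quadruple identity inside Claim \ref{cla:c_reduce} does the term-by-term matching. Where you diverge is the de-normalization, and your version is the harder of the two. The paper does not collapse the gap integers: it fills them, forming an enlarged rectangle $T^{\ast}$ by adjoining one full constant row for each integer in $L_-$ (those never appearing in $T$). This $T^{\ast}$ satisfies the hypothesis of Claim \ref{cla:c_reduce}, so the theorem holds for it, and since no row of $\lambda$ or $\nu$ is re-indexed, every factor indexed by a pair of support rows is literally the same for $T$ and $T^{\ast}$ --- no leg exponent $k-j$ moves. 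The whole discrepancy is then confined to four explicit finite sets: the admissible triples gained ($H_1$, first entry in $L_-$) and lost ($H_2$, second entry in $L_-$, coming from the adjoined constant rows), and the Pieri quadruples gained ($G_1$) and lost ($G_2$); one checks $H_1\leftrightarrow G_1$ and $H_2\leftrightarrow G_2$ factor by factor, using exactly the values $b=0$ and the explicit $a(i,j,t)$ on gap triples that you already identified. Your collapsing route instead re-indexes $\lambda$, which shifts the leg exponent in every surviving core factor; that is precisely the bookkeeping you flag as the real difficulty, and it is avoidable rather than essential. Replacing ``collapse the gaps'' by ``fill the gaps with constant rows'' turns your plan into the paper's proof.
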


\begin{proof}
	Suppose that the $ (m-i+1) $-th column of the tableau $ T $ misses integer $ y_i $, $ i\in [m] $, then claim \ref{cla:psi} shows that $ \psi_T(q,t)=\psi_{(y_{i_1},...,y_{i_m})}(q,t) $. Since $ P_{(m)} $ is a symmetric polynomial, it follows that $ \psi_{(y_{i_1},...,y_{i_m})}(q,t)=\psi_{(l+1-y_{i_m},...,l+1-y_{i_1})}(q,t) $. If we add rows of identical integers less than $ l $ that have never appeared in $ T $, such that this new tableau satisfies the premise of claim \ref{cla:c_reduce}, then $ i $ is in column $ c $ of $ T $ if and only if $ (l+1-i) $ is not in column $ c $ of $ T^C $ for any $ i\in [l] $. Denote this new tableau by $ T^\ast $. Moreover, each factor on the right of \ref{theorem: FactorizationFormula} corresponding to the admissible triple $ (i,j,t)^\lambda $ for $ T^\ast $ is the same as the factor corresponding to the admissible triple $ (l+1-j,l+1-i,t)^{\lambda^N(T)} $ for $ T^C $. It follows that we can keep track of those admissible triples or quadruples originated from attaching vertical stripes to $ \lambda $ exclusively without involving $ \lambda^N(T) $. We are interested in the additional and missing admissible triples of $ T $ compared to $ T^\ast $ as well as quadruples. Though we may state ``compared to $ T^C $" below, the reader should keep in mind that this is the same thing as ``compared to $ T^\ast $".
	
	Partition the set $ [l]=L_0\sqcup L_- $ where $ L_- $ is the set of integers that never appear in $ T $. Note that if $ L_-=\emptyset $, then we are done by claim \ref{cla:c_reduce}. Compared to $ T^C $, $ T $ has the additional admissible triples
	$$
	H_1:=\bigcup_{\substack{i\in L_-\\ j\in L_0\\ i<j}}\bigcup_{t\in[m]\setminus c(j)}\{(i,j,t)\},
	$$
	where $ c(j) $ is the set of columns of $ T $ where $ j $ does not appear. Note that if $ (i,j,t)\in H_1 $, then $$ a(i,j,t)=\begin{cases}
		t-1,&t<\min c(j)\\
		t-1-\text{Card}(c(j)),&t>\max c(j)
	\end{cases},\qquad b(i,j,t)=0. $$
	
	Meanwhile, compared to $ T^C $, $ T $ is missing the following admissible triples
	$$
	H_2:=\bigcup_{j\in L_-}\{(y_{i_{m+1-t}},j,t):t\in [m], y_{i_{m+1-t}}<j\},
	$$
	where $ y_{i_{m+1-t}} $ is the unique missing integer among the $ (k+1) $ distinct integers in column $ t $ of $ T $. Note that if $ (i,j,t)\in H_2 $, then
	
	$$
	a(i,j,t)=t-\min(c(i)),\quad b(i,j,t)=0.
	$$
	
	Note that, however, if we examine the quadruples induced in the process of attaching vertical strips, then compared to $ T^C $, $ T $ has the additional quadruples
	$$ G_1:=\bigcup_{\substack{i\in L_-\\j\in L_0\\i<j}}\bigcup_{q_j=0}^{m-1-\text{Card}(c(j))}\{(i,j,0,q_j)\} .$$
	
	Meanwhile, compared to $ T^C $, $ T $ is missing the following quadruples
	$$
	G_2:=\bigcup_{\substack{i\in L_0\\j\in L_-\\i<j}}\bigcup_{q_j=0}^{\text{Card}(c(i))-1}\{ (i,j,0,q_j) \}.
	$$
	It is easy to see that $ H_1 (\text{respectively }\ H_2)$ and $ G_1(\text{respectively }\ G_2) $ represent the same factors on the right side of \ref{theorem: FactorizationFormula}. We are through.
\end{proof}

\subsection{The general case}
\label{sub:general}
Suppose that the shape of the conjugate partition $ \mu' $ of partition $ \mu(=\mu_1\ge\mu_2\ge\cdots) $ is 
$$ (\underbrace{k_1,...,k_1}_{m_1},\underbrace{k_2,...,k_2}_{m_2}, \cdots\cdots,\underbrace{k_d,...,k_d}_{m_d}) .
$$ 
Let $ \mu^1 $ be the rectangular block with $ m_1 $ columns and $ k_1 $ rows, and let $ \mu^0=\mu\setminus\mu^1 $ be the set-theoretic complement of $ \mu^1 $ in $ \mu $.

\begin{claim}
	Theorem \ref{theorem: FactorizationFormula} is true in general.
\end{claim}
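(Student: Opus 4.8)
The plan is to induct on the number $d$ of distinct column heights occurring in $\mu'$, equivalently on the number of rectangular blocks of $T$. The base case $d=1$ is a single rectangular block, which by Proposition \ref{prop: threekinds} is of the first, second, or third kind, and for each of these Theorem \ref{theorem: FactorizationFormula} has already been established (Claim \ref{card k-1 claim}, Claim \ref{cla:c_reduce} and its corollary, and the horizontal-strip and identical-column computations). For the inductive step I would use the splitting $\mu=\mu^1\sqcup\mu^0$ from the statement, with $\mu^1$ the leftmost (tallest) block formed by the first $m_1$ columns of height $k_1$, and $\mu^0$ the union of the remaining $d-1$ blocks. Writing $T=T^1\sqcup T^0$ for the induced left/right splitting of the unique tableau, both $T^1$ and $T^0$ are again unique of their shapes and weights by Proposition \ref{prop: UniqueSST}, so the block case applies to $T^1$ and the induction hypothesis applies to $T^0$.

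The engine of the argument would be the same as in the block cases: expand $P_\mu$ in the elementary basis via \eqref{elementaryexpansion}, compute $c^\nu_{\lambda\mu}$ by attaching columns one at a time with the vertical Pieri rule \eqref{VerticalPieri}, and record each attachment as a quadruple $(i,j,q_i,q_j)^\lambda$ as in the ``Vertical Pieri's formulas as Quadruples'' discussion. The leading term factors as $e_{\mu'}=e_{(\mu^1)'}\,e_{(\mu^0)'}$, which suggests processing the height-$k_1$ columns first and then the shorter columns, producing a forced intermediate shape $\rho$, namely the shape reached after the boxes of $T^1$ have been attached to $\lambda$ (well defined precisely because $T$ is unique). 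The target is then to match admissible triples of $T$ against those of $T^1$ and $T^0$: triples $(j,k,m)$ with $m\le m_1$ should reproduce the first-block factorization with base $\lambda$, while triples with $m>m_1$ should reproduce the $\mu^0$-factorization with base $\rho$.

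The main obstacle is that this matching is not literal, and reconciling the discrepancy across the block boundary is the technical heart of the proof. Two sources of surplus appear. First, $\psi_T\neq\psi_{T^1}\psi_{T^0}$ in general, because the horizontal strips of $T$ (indexed by value) straddle both column-blocks. Second, for a triple $(j,k,m)$ with $m>m_1$ the counts $a(j,k,m)$ and $b(j,k,m)$ absorb contributions from the $m_1$ first-block columns; comparing the resulting factor evaluated at $X_i=q^{\lambda_i}t^{1-i}$ with the corresponding $T^0$-factor evaluated at the shifted $X_i=q^{\rho_i}t^{1-i}$, one finds the two $q$-exponents reconcile in the first fraction but differ in the second by exactly the number of first-block columns containing $j$ but not $k$. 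I would resolve both discrepancies together, exactly as in the second- and third-kind proofs, by isolating the ``extra'' quadruples and extra admissible triples created at the boundary and showing that they represent the same factors in $\mathbb{Q}(q,t,X_1,X_2,\dots)$, so that the surplus from $\psi_T$ is precisely cancelled by the surplus factors and the base shift $\lambda\mapsto\rho$.

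Once this boundary bookkeeping is verified, the full product over admissible triples of $T$ together with $\psi_T$ collapses into the product of the first-block expression (by the block case) and the $\mu^0$-expression (by the induction hypothesis applied with base $\rho$), which equals $c^\nu_{\lambda\mu}$, closing the induction. The step I expect to be genuinely delicate is the boundary reconciliation of the preceding paragraph: it is the only place where two blocks of different heights interact, and it must be checked uniformly no matter whether the tall block $T^1$ is of the first, second, or third kind. I would therefore organize that step around the quadruple representation rather than the triples, since quadruples make the ``extra vs.\ common'' dichotomy cleanest, and then translate back to admissible triples at the end to recover the statement of Theorem \ref{theorem: FactorizationFormula}.
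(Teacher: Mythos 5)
Your overall architecture is the same as the paper's: induct on the number $d$ of rectangular blocks, peel off the tallest block $\mu^1$, use the forced intermediate shape $\rho$ (the paper's $\kappa$), and match the admissible triples of $T$ with $m\le m_1$ against the block case for $T(\mu^1)$ and those with $m>m_1$ against the induction hypothesis for $T(\mu^0)$ with the shifted base. However, there is a genuine omission: you never justify that $c^{\nu}_{\lambda\mu}$ may be computed from $P_\lambda\cdot P_{\mu^1}\cdot P_{\mu^0}$ (equivalently, from the leading term of the elementary expansion) in the first place. Since $P_{\mu^1}P_{\mu^0}=P_\mu+\sum_{\eta'_1>k_1}c^{\eta}_{\mu^0\mu^1}P_\eta$, your "collapse'' into (block case)$\times$(induction hypothesis at base $\rho$) computes the coefficient of $P_\nu$ in $P_\lambda P_{\mu^1}P_{\mu^0}$, not in $P_\lambda P_\mu$, and one must still prove that no $P_\nu$ arises from $P_\lambda P_\eta$ for the correction terms $\eta$. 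The paper does this with a dominance-order argument: any $e_{\kappa'}$ occurring in $P_\eta$ has $\kappa'\unrhd\eta'\rhd\mu'$ with $\kappa'_1>k_1$, yet contributing to $P_\nu$ forces $\sum_{i=m_1+\cdots+m_{j-1}+1}^{m_1+\cdots+m_j}\kappa'_i=k_jm_j$ for each $2\le j\le d$, which contradicts $\sum_{i>m_1}\eta'_i<\sum_{j\ge 2}k_jm_j$. This step is not cosmetic; without it the inductive identity is unproved.

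Your two anticipated "surpluses'' at the block boundary are also misdiagnosed: both are zero, and the reconciliation you plan to organize around extra quadruples is looking for a cancellation that does not occur because there is nothing to cancel. First, for $T$ unique of its shape and weight, $\psi_T$ \emph{is} multiplicative over the rectangular blocks, $\psi_{T(\mu)}=\psi_{T(\mu^1)}\psi_{T(\mu^0)}$; the paper uses this directly, and it follows from Proposition \ref{prop: UniqueSST} together with the definition of $\psi$. Second, the discrepancy you compute in the second fraction — equal to the number $B_1$ of first-block columns containing $j$ but not $k$ — vanishes identically: if $(j,k,m)$ is admissible with $m$ beyond the first block, the $m$-th column contains $k$ and is strictly shorter than every first-block column, so by condition $(C'_n)$ it is a subset of each of them; hence every first-block column contains $k$ and $B_1=0$ (and likewise $n_j-n_k=-A_1$, which is exactly the shift $a^{T(\mu)}=A_1+a^{T(\mu^0)}$, $b^{T(\mu)}=b^{T(\mu^0)}$ needed for \emph{both} fractions to match). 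The correct mechanism is thus the subset structure of unique tableaux forcing each purported surplus to be individually zero, not a pairing of boundary surpluses against one another; as written, your reconciliation step would not close.
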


\begin{proof}
	We have already proved the case where the number of rectangular blocks in $ \mu $ is $ d=1 $ and we proceed by induction on $ d $.
	
	By formula \ref{elementaryexpansion}, Pieri's formulas \ref{prop: Pieri} and the idea of truncated expressions, we know that
	\begin{align*}
		P_{\mu^1}\cdot P_{\mu^0}=P_{\mu}+\sum_{\eta'_1>k_1}c_{\mu^0 \mu^1}^\eta(q,t) P_\eta
	\end{align*}
	which can be viewed as multiplying $ P_{\mu^{0}} $ by $ P_{\mu^{1}} $, where $ \mu^{1} $ has the longest column $ k_1 $. By applying formula \ref{elementaryexpansion} for the special case where the partition is $ \mu^{1} $, a rectangle, we have $ \eta_{1}'>k_1 $ in the summation above. We also note that $ \eta'\rhd \kappa' $ by formula \ref{elementaryexpansion}.
	
	So,
	\begin{align*}
		P_\mu=P_{\mu^1}\cdot P_{\mu^0}-\sum_{\eta'_1>k_1}c_{\mu^0 \mu^1}^\eta(q,t) P_\eta.
	\end{align*}
	
	It follows from proposition \ref{prop: UniqueSST} and the definition of $ \psi_T(q,t) $ that for a tableau $ T(\mu) $, unique of its shape and weight, $ \psi_T(q,t) $ is multiplicative in terms of the rectangular blocks of $ T(\mu) $: $$ \psi_{T(\mu)}(q,t)=\psi_{T(\mu^0)}\cdot\psi_{T(\mu^1)}(q,t) .$$

	We first compute the coefficient of $ P_\nu $ in $ P_\lambda\cdot P_{\mu^1}\cdot P_{\mu^0}=(P_\lambda\cdot P_{\mu^1})\cdot P_{\mu^0} $. We claim that the intermediate Macdonald polynomial $ P_\kappa $ from the product $ P_\lambda\cdot P_{\mu^1} $ must correspond to the tableau $ T(\mu^1) $ in order to get $ P_{\nu} $ in the expansion of $ P_\kappa\cdot P_{\mu^0} $. Otherwise, since there will be $ (m_1\times k_1) $ add-ons in total compared to $ \lambda $ after multiplying $ P_{\mu^1} $, by proposition \ref{prop: UniqueSST}, there must exist either an integer $ I $ contained in $ T(\mu^0) $ such that the $ I $-th row of $ \kappa $ has more than $ m_1 $'s add-ons compared to $ \lambda $, or an integer $ J $ contained in $ T(\mu^1)\setminus T(\mu^0) $ such that the $ J $-th row of $ \kappa $ has more than $ \text{Card}(J) $ add-ons compared to $ \lambda $, where $ \text{Card}(J) $ is the number of times that $ J $ has appeared in $ T(\mu^1) $, which is also equal to the number of times that $ J $ appears in $ T(\mu) $. We know that the latter case will not yield $ P_\nu $ in the end, but the former case contradicts formula \ref{elementaryexpansion}.
	
	Let the weight of tableau $ T(\mu^1) $ be $ (1^{n_1},2^{n_2},...,) $, where for any $ i\in\mathbb{Z}_{>0} $, $ n_i $ is the number of times that $ i $ has appeared in $ T(\mu^1) $.
	
	Therefore, by induction and the fact that $ \psi_T(q,t) $ is multiplicative in terms of rectangular blocks whenever $ T $ is unique of its shape and weight, we compute the coefficient $ \tilde{c}^{\nu}_{\lambda\mu} $ of $ P_\nu $ in the expansion of $ P_\lambda \cdot P_{\mu} $ induced by $ P_\lambda\cdot P_{\mu^1}\cdot P_{\mu^0} $:
	\begin{align*}
		\tilde{c}^{\nu}_{\lambda\mu}(q,t)=&\psi_{T(\mu^1)}(q,t)\prod_{\substack{(j,k,m)^{T(\mu^1)}\text{ ad}\\ m\le m_1}}\left(\frac{X_k-q^{-a_{j,k}^m+b_{j,k}^m-1}tX_j}{X_k-q^{-a_{j,k}^m+b_{j,k}^m}X_j}\cdot\frac{X_k-q^{a_{j,k}^m}t^{-1}X_j}{X_k-q^{a_{j,k}^m-1}X_j}\right)\cdot\\
		&\psi_{T(\mu^0)}(q,t)\prod_{\substack{(j,k,m)^{T(\mu^0)}\text{ ad}\\ m\le m_2+\cdots+m_d}}\left(\frac{X_k-q^{n_j-n_k-a_{j,k}^m+b_{j,k}^m-1}tX_j}{X_k-q^{n_j-n_k-a_{j,k}^m+b_{j,k}^m}X_j}\cdot\frac{X_k-q^{n_j-n_k+a_{j,k}^m}t^{-1}X_j}{X_k-q^{n_j-n_k+a_{j,k}^m-1}X_j}\right)\\
		=&\psi_{T(\mu)}(q,t)\prod_{(j,k,m)^{T(\mu)}\text{ ad}}\left(\frac{X_k-q^{-a_{j,k}^m+b_{j,k}^m-1}tX_j}{X_k-q^{-a_{j,k}^m+b_{j,k}^m}X_j}\cdot\frac{X_k-q^{a_{j,k}^m}t^{-1}X_j}{X_k-q^{a_{j,k}^m-1}X_j}\right).
	\end{align*}
	
	To conclude the proof, we argue that $ \tilde{c}^{\nu}_{\lambda\mu}(q,t)=c^{\nu}_{\lambda\mu}(q,t) $.
	
	Recall that 
	\begin{align*}
		P_\mu=P_{\mu^1}\cdot P_{\mu^0}-\sum_{\eta'_1>k_1}c_{\mu^0 \mu^1}^\eta(q,t) P_\eta.
	\end{align*}
	It suffices to show that there is no $ P_\nu $ in the Macdonald polynomial expansion of $ P_\lambda\cdot P_\eta $ for any $ \eta $ such that $ c_{\mu^0 \mu^1}^\eta(q,t)\ne 0 $ in the above identity.
	
	Indeed, since $ \eta_1'>k_1 $, and every $ \eta $ must contain $ \mu^1 $, we deduce that 
	\begin{equation} \sum_{i>m_1}\eta_i'<\sum_{j=2}^d k_jm_j .\label{contra}
	\end{equation}
	Notice that by formula \ref{elementaryexpansion}, we have
	$$
	P_{\eta}=\sum_{\kappa'\unrhd\eta'}c_{\kappa'}e_{\kappa'},
	$$
	where $ c_{\kappa'} $ are coefficients. In particular, $ \kappa'_1>k_1 $ for all $ \kappa' $, since $ \eta_{1}'>k_1 $. In order to get $ P_\nu $ in $ P_\lambda\cdot e_{\kappa'} $, we argue that
	$$ \sum_{i=m_1+\cdots+m_{d-1}+1}^{m_1+\cdots +m_{d}}\kappa_i'=k_d m_d. 
	$$
	By proposition \ref{prop: UniqueSST}, we know that every integer appearing in the rightmost block of $ \mu $ must also appear in every column with greater length in $ \mu $. Moreover, by the vertical Pieri's formula \ref{VerticalPieri}, $ \kappa_{i} $'s where $ m_1+\cdots+m_{d-1}+1\le i\le m_1+\cdots+m_d $ must be able to provide enough boxes so that the resulting shape is $ \nu $, this translates as
	$$ \sum_{i=m_1+\cdots+m_{d-1}+1}^{m_1+\cdots +m_{d}}\kappa_i'\ge k_d m_d. 
	$$
	On the other hand, $ \kappa'\unrhd \eta'\unrhd\mu' $, it follows that 
	$$ \sum_{i=m_1+\cdots+m_{d-1}+1}^{m_1+\cdots +m_{d}}\kappa_i'\le k_d m_d. 
	$$
	Thus, we know that the number of boxes that $ e_{\kappa'_i} $'s, $ m_1+\cdots+m_{d-1}+1\le i\le m_1+\cdots m_d $ add to $ \lambda $ in each row is exactly given by the weight of the rightmost block $ (m_d^{k_d}) $ of $ \mu $.
	
	Likewise, one can argue successively that for any $ 2\le j\le d $, we have
	$$
	\sum_{i=m_1+\cdots+m_{j-1}+1}^{m_1+\cdots +m_{j}}\kappa_i'= k_j m_j.
	$$
	It follows that
	$$
	\sum_{i>m_1}\kappa_i'= \sum_{j=2}^dk_j m_j.
	$$
	Since $ \kappa'\unrhd\eta' $ and they are all partitions of $ |\mu| $, we know that
	$$
	\sum_{j=2}^dk_j m_j=\sum_{i>m_1}\kappa'_i\le\sum_{i>m_1}\eta'_i.
	$$
	However, this contradicts \eqref{contra}. Therefore, we conclude that $ \tilde{c}_{\lambda\mu}^\nu(q,t)=c_{\lambda\mu}^\nu(q,t) $.
	
\end{proof}

\section{Connection with Stanley's conjecture}\label{section:connection}

Recall that $$ U(a,l)=1-q^{a+1}t^{l} ,\quad L(a,l)=1-q^{a}t^{l+1} $$ are the upper hook length and the lower hook length, respectively. Then equation (\ref{theorem: FactorizationFormula}) can be rewritten in the following form
\begin{align}\label{eq:deformed}
	c_{\lambda\mu}^{\nu}(q,t)=\prod_{a,l,a',l'}\left(\frac{U(a,l)}{L(a,l)}\cdot\frac{L(a',l')}{U(a',l')}\right)
\end{align}
where the product is over wherever applicable. Note that one of the interesting features of \eqref{eq:deformed} is that we have the same number of fractions in the form $ U/L $ with those in the form $ L/U $.

Let $ T $ be a rectangular unique tableau of the second kind on shape $ \mu $ with $ m $ columns and $ k $ rows. For any integer $ z $, let $ c_z $ be the number of times that $ z $ has appeared in $ T $. Suppose that the corresponding Littlewood-Richardson coefficient $ c(\lambda,T)=1 $. Note that $ z\in\text{Int}(T) $ if and only if $ 0<c_z<m $. Also, $ c_{z}=m $ is equivalent to $ z $ appearing in every column of $ T $. We have the following lemma.

\begin{lemma}\label{lemma:characterization second kind}
 Under the above assumptions, assume that both $ j-1 $ and $ j $ have appeared as the second entry of some admissible triple of $ T $ and $ i,j\in\text{Int}(T) $ such that  $ i<j-1 $. Then 
 $$ \begin{cases}
 (\lambda_{j-1}+m)-(\lambda_{j}+c_{j})\ge c_{i}, &\text{when } c_{j-1}=m \\
 \lambda_{j-1}-(\lambda_{j}+c_{j})\ge 0, &\text{when } 0<c_{j-1}<m
 \end{cases}. 
$$
\end{lemma}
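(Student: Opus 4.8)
The plan is to deduce everything from the explicit correspondence behind Proposition \ref{prop:LRvsKostka} together with the combinatorics of second-kind rectangles. Since $K_{\mu\chi}=1$ and $c(\lambda,T)=1\neq 0$, the injection constructed in the proof of Proposition \ref{prop:LRvsKostka} produces a \emph{unique} Littlewood--Richardson tableau $U$ of shape $\nu/\lambda$ and content $\mu=(m^{k})$, and the Gelfand--Tsetlin array $\{\mu_a^b\}$ attached to $U$ is exactly the one encoding $T$ (the unique semistandard tableau of shape $\mu$, weight $\chi$). I would first record the resulting \emph{transpose identity}: for every value $a$ and every row index $b$, the number of entries equal to $a$ in row $b$ of $U$ equals the number of entries equal to $b$ in row $a$ of $T$, both being $\mu_a^b-\mu_a^{b-1}$. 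I also set $\nu_z=\lambda_z+c_z$, which is forced by $\chi_z=c_z$.

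Next I would exploit the structure of a second-kind rectangle. Each column of $T$ is $R\cup\{e_c\}$, where $R$ is the set of $k-1$ common values and $e_1\le\cdots\le e_m$ are the per-column ``extras''. Two observations follow: (i) an intrinsic value $z$ occupies exactly one row of $T$, namely $\rho(z):=1+\lvert\{r\in R: r<z\}\rvert$; (ii) a common value $z=r_s$ occupies row $s$ in the columns with $e_c>z$ and row $s+1$ in those with $e_c<z$. Combined with the transpose identity, (i) says that whenever $z\in\text{Int}(T)$ the whole of row $z$ of $U$ is \emph{monochromatic}, consisting of $c_z$ copies of the single value $\rho(z)$; this is the key mechanism. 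The hypothesis that $j-1$ and $j$ occur as second entries of admissible triples is what guarantees that the corresponding rows of $U$ are nonempty (equivalently $c_{j-1},c_j>0$) and that these factors really appear in \eqref{FactorizationFormula}.

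\textit{Case} $0<c_{j-1}<m$ (so $j-1\in\text{Int}(T)$). Since neither $j-1$ nor $j$ lies in $R$, we have $\{r\in R:r<j\}=\{r\in R:r<j-1\}$, hence $\rho(j-1)=\rho(j)$. Thus rows $j-1$ and $j$ of $U$ are both monochromatic with the \emph{same} value. Strict increase down columns of $U$ forbids that value twice in one column, so these two nonempty rows occupy disjoint sets of columns, namely $(\lambda_{j-1},\nu_{j-1}]$ and $(\lambda_j,\nu_j]$. As $\nu$ is a partition, the only non-degenerate way for disjoint intervals with $\lambda_{j-1}\ge\lambda_j$, $\nu_{j-1}\ge\nu_j$ to sit is $\nu_j\le\lambda_{j-1}$ (the alternative ordering collapses to $c_{j-1}=0$). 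This gives $\lambda_{j-1}-(\lambda_j+c_j)\ge 0$, as claimed.

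\textit{Case} $c_{j-1}=m$ (so $j-1=r_{p+1}\in R$, where $p:=\lvert\{r\in R:r<j-1\}\rvert$). Put $\beta:=\lvert\{c: e_c<j-1\}\rvert$. By (ii), value $j-1$ sits in row $p+1$ of $T$ (for $e_c>j-1$) and in row $p+2$ exactly $\beta$ times (for $e_c<j-1$); by (i), the intrinsic value $j$ sits only in row $p+2$. Transposing, row $j$ of $U$ is entirely value $p+2$, and the rightmost $\beta$ entries of row $j-1$ of $U$ are value $p+2$. Applying column-strictness to the value $p+2$ across rows $j-1$ and $j$ forces their $(p+2)$-columns to be disjoint, so $\nu_j\le\nu_{j-1}-\beta$, i.e. $\nu_{j-1}-\nu_j\ge\beta$. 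Finally, every column with $e_c=i$ has $e_c=i<j-1$, so $c_i\le\beta$, and therefore $(\lambda_{j-1}+m)-(\lambda_j+c_j)=\nu_{j-1}-\nu_j\ge\beta\ge c_i$, completing the proof.

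The main obstacle I anticipate is not the two case analyses, which are short, but pinning down the transpose identity and the monochromatic-row conclusion cleanly from Proposition \ref{prop:LRvsKostka}, and carefully excluding the degenerate interval orderings in the column-disjointness steps; once these are in place the inequalities drop out of the second-kind structure.
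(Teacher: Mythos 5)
Your proposal is correct and follows essentially the same route as the paper: both arguments pass to the unique Littlewood--Richardson filling of $\nu/\lambda$ (the transpose of $T$ via the Gelfand--Tsetlin correspondence of Proposition \ref{prop:LRvsKostka}), observe that rows $j-1$ and $j$ share a common entry ($\rho(j)$ throughout row $j$, and either all of row $j-1$ or its last $\beta=\sum_{t<j-1,\,t\in\mathrm{Int}(T)}c_t$ boxes), and use column-strictness to force the column supports of that entry to be disjoint, which yields the two inequalities. Your write-up is a more careful version of the paper's terse argument — in particular your $\beta$ makes precise the paper's $\sum_{t<j-1}c_t$ — but there is no substantive difference in method.
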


The following example illustrates this lemma.

\begin{center}
	\begin{tabular}{ccc}	
		
		\noindent\begin{minipage}{0.3\linewidth}
			\centering	
			\ytableausetup{mathmode,boxsize=1.3em}
				\begin{ytableau}
					\none&\none&\none&\none&\none&\none&\none&\none&\none&1&1\\
					\none&\none&\none&\none&\none&\none&\none&\none&1\\
					\none&\none&\none&\none&1&1&2&2&2\\
					\none&\none&\none&\none&2\\
					2&3&3&3&3\\
					3
			\end{ytableau}
		\end{minipage}
		
		$\quad \iff$

		\noindent\begin{minipage}{0.3\linewidth}
			\centering	
			\ytableausetup{mathmode,boxsize=1.3em}
			
				\begin{ytableau}
				1&1&2&3&3\\
				3&3&3&4&5\\
				5&5&5&5&6
			\end{ytableau}
		\end{minipage}
	\end{tabular}
	\renewcommand{\figurename}{Bijection}
	\captionof*{figure}{An example with $ c_1=2, c_2=1, c_3=5, c_4=1, c_5=5, c_6=1 $ and $ \text{Int}(T)={\tiny
			\begin{ytableau}
			1&1&2&4&6
		\end{ytableau}} $}
\end{center}

Assume that $ c(\lambda,T(\mu))=1>0 $ and $ T $ is unique of its shape and weight, then the Littlewood-Richardson tableau is uniquely determined by $ T(\mu) $. In particular, the weight of each row is exactly the same as if we assume rows of $ \lambda $ are far apart. Namely, from left to right in each row and from top to bottom, we first fill the boxes with integer $1$ until $ 1 $ is used up, then we fill the boxes with integer $ 2 $ until $ 2 $ is used up,......

\begin{proof}
	If $ j-1\in\text{Int}(T) $, then both the $ (j-1) $-th row and the $ j $-th row in $ \nu/\lambda $ are filled with the same integer. In order that $ c(\lambda,T)=1>0 $, there should be no overlap. We deduce that $ \lambda_{j-1}-(\lambda_j+c_j)\ge 0 $.
	
	If $ j-1\notin\text{Int}(T) $, then the last $ \sum_{t<j-1}c_{t} $ boxes in the $ (j-1) $-th row of $ \nu/\lambda $ and the $ j $-th row of $ \nu/\lambda $ are filled with the same integer. In order that $ c(\lambda,T)=1>0 $, there should be no overlap in the last $ \sum_{t<j-1}c_t $ boxes of the $ (j-1) $-th row with the $ j $-th row. We deduce that $ (\lambda_{j-1}+c_{j-1})-(\lambda_{j}+c_{j})\ge\sum_{t<j-1}c_t\ge c_{i} $.
\end{proof}

Let $ T $ be a rectangular unique tableau of the third kind on shape $ \mu $ with $ m $ columns and $ k $ rows. For any integer $ z $, let $ x_z $ be the number of columns of $ T $ that miss integer $ z $ among the total $ k+1 $ distinct integers in $ T $. Suppose that the corresponding Littlewood-Richardson coefficient $ c(\lambda,T)=1 $. Similarly, we have the following lemma.

\begin{lemma}\label{lemma:characterization third kind}
	Under the above assumptions, if both $ j-1 $ and $ j $ are in $ T $, then 
	$$ \lambda_{j-1}-\lambda_{j}\ge x_{j-1} .$$
\end{lemma}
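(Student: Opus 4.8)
The plan is to reduce the statement to the one-row (horizontal strip) case, exactly as Lemma~\ref{lemma:characterization second kind} was reduced to the intrinsic/horizontal case, but now using the complement and reversal constructions developed for the third kind. First I would record the equivalent form of the inequality. Since each of the $m$ columns of $T$ either contains $j-1$ or misses it, the number of $(j-1)$'s in $T$ equals $m-x_{j-1}$, and this number is also $\nu_{j-1}-\lambda_{j-1}$. Hence $x_{j-1}=m-(\nu_{j-1}-\lambda_{j-1})$, so $\lambda_{j-1}-\lambda_j\ge x_{j-1}$ is equivalent to $\nu_{j-1}\ge \lambda_j+m$. This is the inequality I will actually establish.

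Next I would pass to the complement. After reducing to the case where the $k+1$ distinct integers of $T$ are exactly $1,2,\dots,k+1$ (the $T^\ast$ padding used in the corollary following Claim~\ref{cla:c_reduce}, which leaves the pair of consecutive rows $j-1,j$ undisturbed), Claim~\ref{cla:c_reduce} gives $c(\lambda,T,q,t)=c(\lambda^N(T),T^C,q,t)$ for any $N>\lambda_1$, where $T^C$ is the complement; specializing $q=t$ (equivalently, reading this at the level of the underlying Schur coefficients) yields the equality $c(\lambda,T)=c(\lambda^N(T),T^C)$ of integer Littlewood--Richardson coefficients. Since $T^C$ is a single row of $m$ boxes, i.e.\ has shape $(m)$, the horizontal Pieri formula (Proposition~\ref{prop: Pieri}) shows that $c(\lambda^N(T),T^C)$ equals $1$ when the skew shape $\nu(T^C)/\lambda^N(T)$ is a horizontal strip and $0$ otherwise. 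As $c(\lambda,T)=1>0$ by hypothesis, the skew shape $\nu(T^C)/\lambda^N(T)$ must be a horizontal strip.

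It then remains to read off what the horizontal-strip condition says. Writing $l=k+1$, the reversal satisfies $\lambda^N(T)_a=N-\lambda_{l+1-a}$, and the complement adds $x_{l+1-a}$ boxes to its $a$-th row, because the integer $a$ occurs in $T^C$ with multiplicity equal to the number of columns of $T$ missing $l+1-a$; thus $\nu(T^C)_a=N-\lambda_{l+1-a}+x_{l+1-a}$. The horizontal-strip inequality $\nu(T^C)_{a+1}\le \lambda^N(T)_a$ then reads $N-\lambda_{l-a}+x_{l-a}\le N-\lambda_{l+1-a}$, i.e.\ $\lambda_{l-a}-\lambda_{l+1-a}\ge x_{l-a}$. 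Setting $j-1=l-a$ turns this into $\lambda_{j-1}-\lambda_j\ge x_{j-1}$, which is the assertion.

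The main obstacle is the bookkeeping: checking that the $a$-th row multiplicity of the complement is exactly $x_{l+1-a}$, that the index reversal lines the consecutive pair $(j-1,j)$ up with a consecutive pair of rows of $\nu(T^C)/\lambda^N(T)$, and that the reduction from arbitrary integer labels to $1,\dots,k+1$ is harmless for this local pair. As a consistency check that explains why the bound is this strong, I would also note the following. In the far-apart greedy Littlewood--Richardson filling, if rows $j-1$ and $j$ share a content value, then column-strictness of that value forces the bound directly; in the complementary situation all missing integers lie weakly below $j-1$, so $x_j=0$ and the inequality degenerates to the partition condition $\nu_{j-1}\ge\nu_j$. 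The reversal argument handles both situations uniformly, which is why I prefer it to a direct case analysis.
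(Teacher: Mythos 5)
Your reduction to the case where the $k+1$ distinct entries of $T$ are exactly $1,\dots,k+1$ is where the argument breaks. To extract a horizontal strip from Claim \ref{cla:c_reduce} applied to the padded tableau $T^{\ast}$ you need $c(\lambda,T^{\ast},q,t)\neq 0$, and you assert the padding is ``harmless for this local pair''; but padding inserts a full row of $m$ copies of a missing integer $z$, which forces $\nu^{\ast}_{z}=\lambda_{z}+m$, and the hypothesis $c(\lambda,T)=1$ does not control $\lambda_{z-1}+\chi_{z-1}$ against $\lambda_{z}+m$. Concretely, take $m=k=3$ and let $T$ have columns $\{1,3,4\},\{1,3,5\},\{3,4,5\}$ (rows $113/334/455$), so the distinct entries are $1,3,4,5$ and the integer $2$ must be padded; take $\lambda=(10,10,7,2,1)$. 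Then $\nu=(12,10,10,4,3)$, the unique Littlewood--Richardson filling is valid, so $c(\lambda,T)=1$ and the lemma applies with the nontrivial pair $(4,5)$, $x_{4}=1$. But $\chi^{\ast}_{2}=3$ gives $\nu^{\ast}=(12,13,10,4,3)$, which is not a partition, so $c(\lambda,T^{\ast},q,t)=0$, Claim \ref{cla:c_reduce} degenerates to $0=0$, and no horizontal-strip condition (hence no inequality) can be read off, even though the conclusion is true. The obstruction is not incidental: the partition condition for $\nu^{\ast}$ at a padded row $z$ is an inequality of exactly the shape you are trying to prove, but for a pair $(z-1,z)$ with $z\notin T$, where it can and does fail.

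In the special case where the entries of $T$ are already $1,\dots,k+1$ your argument is correct and the bookkeeping checks out: $x_{j-1}=m-(\nu_{j-1}-\lambda_{j-1})$, the complement $T^{C}$ contains $a$ with multiplicity $x_{l+1-a}$, and the horizontal-strip inequalities $\nu(T^{C})_{a+1}\le\lambda^{N}(T)_{a}$ unwind to $\lambda_{j-1}-\lambda_{j}\ge x_{j-1}$ for every consecutive pair. The paper, however, does not route through Claim \ref{cla:c_reduce} at all: it argues directly on the unique Littlewood--Richardson filling of $\nu/\lambda$, observing that the last $m-\sum_{t=1}^{j-1}x_{t}$ boxes of row $j-1$ carry the value $j-1$ and the last $m-\sum_{t=1}^{j}x_{t}$ boxes of row $j$ carry the value $j$, and reading the inequality off the requirement that these blocks not overlap in a column. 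That argument needs no hypothesis on the set of entries and is the one you should give; to salvage your route you would have to establish the horizontal-strip property of $\nu(T^{C})/\lambda^{N}(T)$ for non-consecutive entry sets by other means, which is essentially the lemma itself.
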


The following example illustrates this lemma.

\begin{center}
	\begin{tabular}{ccc}	
		
		\noindent\begin{minipage}{0.3\linewidth}
			\centering	
			\ytableausetup{mathmode,boxsize=1.3em}
		 	\begin{ytableau}
					\none&\none&\none&\none&\none&1&1&1&1\\
					\none&\none&1&1&1&2&2\\
					2&2&2&2&2
			\end{ytableau}
		\end{minipage}
		
		$\quad \iff$

		\noindent\begin{minipage}{0.3\linewidth}
			\centering	
			\ytableausetup{mathmode,boxsize=1.3em}
		
				\begin{ytableau}
					1&1&1&1&2&2&2\\
					2&2&3&3&3&3&3
			\end{ytableau}
		\end{minipage}
	\end{tabular}
	\renewcommand{\figurename}{Bijection}
	\captionof*{figure}{An example with $ x_1=3,x_2=2,x_3=2 $ and $ \lambda_1-\lambda_2=3\ge x_1, \lambda_2-\lambda_3=2\ge x_2 $.}
\end{center}

\begin{proof}
	The number of integers that are less than $ j-1 $ in the $ (j-1) $-th row of $ \nu/\lambda $ is equal to 
	$$ \sum_{t=1}^{j-2}m-\sum_{t=1}^{j-2}(m-x_t)=\sum_{t=1}^{j-2}x_t.
	$$
	Notice that $ \sum_{t=1}^{j-2}x_t<m-x_{j-1} $, we conclude that the last $ m-\sum_{t=1}^{j-1}x_t $ boxes in the $ (j-1) $-th row are filled by integer $ j-1 $.
	
	Replacing $ j-1 $ by $ j $, we know that the number of integers that are less than $ j $ in the $ j $-th row of $ \nu/\lambda $ is equal to 
	$ \sum_{t=1}^{j-1}x_t $, and the last $  m-\sum_{t=1}^{j}x_t $ boxes are filled by $ j $. Since $ c(\lambda,T)=1>0 $, there should be no overlap in the last $ m-\sum_{t=1}^{j-1}x_t $ boxes of the $ (j-1) $-th row with the $ j $-th row. We deduce that
	\begin{align*} \sum_{t=1}^{j-1}x_t-(\lambda_{j-1}-\lambda_{j})\le \sum_{t=1}^{j-2}x_t	.	
	\end{align*}
	Hence, $
		\lambda_{j-1}-\lambda_{j}\ge x_{j-1}.$
\end{proof}
	
\noindent \textit{Remark: We have actually proved that each row of $ \nu/\lambda $ are filled by at most two consecutive integers under the assumptions of either Lemma \ref{lemma:characterization second kind} or Lemma \ref{lemma:characterization third kind}.}

\begin{proof}[Proof of Proposition \ref{prop:integral}]
	Since $ |\nu|=|\lambda|+|\mu| $, it suffices to show that each fraction in the form of $ U/L $(respectively, $ L/U $) in \eqref{eq:deformed} flips a hook in $ \lambda $ and $ \mu $(respectively, $ \nu $) from $ L $ to $ U $(respectively, from $ U $ to $ L $).
	
	First, we study the case where $ \mu $ is rectangular with $ m $ columns and $ k $ rows.
	
    Let $ T $ be a tableau on $ \mu $ which is unique of its shape and weight. Note that by the definition of $ \psi_T(q,t) $, Claim \ref{cla:psi} and Proposition \ref{prop: threekinds}, we can find a tableau $ H $ with continuous entries in $ 1<2<\cdots< u $, $u\le m$, with only one row and $ m $ columns, such that $ \psi_T(q,t)=\psi_{H}(q,t) $. 
    
    Let $ n_i $ be the number of integers less than or equal to $ i $ in $ H $. By the definition of $ \psi_H(q,t) $, we write
    \begin{align}\label{eq: First_Factor}
    	\begin{split}
    		\psi_T(q,t)=\psi_{H}(q,t)&=\prod_{i=2}^{u}\left(\prod_{j=0}^{n_{i-1}-1}\frac{L(j,0)}{U(j,0)}\cdot\prod_{j=n_{i}-n_{i-1}}^{n_{i}-1}\frac{U(j,0)}{L(j,0)}\right)\\
    		&=\left(\prod_{j=0}^{n_{1}-1}\frac{L(j,0)}{U(j,0)}\right)\cdot\left(\prod_{i=2}^{u-1}\prod_{j=0}^{n_{i}-n_{i-1}-1}\frac{L(j,0)}{U(j,0)}\right)\cdot\left( \prod_{j=n_u-n_{u-1}}^{n_u-1}\frac{U(j,0)}{L(j,0)} \right).
    	\end{split}
    \end{align}
    Note that $ n_1+\sum_{i=2}^{u-1}(n_i-n_{i-1})=n_{u-1}\le m. $ This suggests flip the boxes in the last row of $ \mu $ whose arm lengths equal $ n_u-n_{u-1},n_u-n_{u-1}+1,\cdots,n_u-1 $ and flip some boxes in $ \nu $ whose leg length is zero with corresponding arm lengths increasing from zero.
 
 	We also have
    
	\begin{align}\label{eq:second factor}
		\begin{split}
		&\prod_{(i,j,w) \ -\text{admissible}} \left(\frac{X_{j} - q^{-a(i,j,w)+b(i,j,w)-1} tX_{i}}{X_{j}-q^{-a(i,j, w)+b(i,j, w)} X_{i}}\cdot
		\frac{X_{j} - q^{-a(i,j,w)}t^{-1} X_{i}}{X_{j}-q^{-a(i,j,w)-1}X_{i}} \right)\\
		=& \prod_{(i,j,w) \ -\text{admissible}} \left(\frac{L(\lambda_{i}-\lambda_{j}-a_{i,j}^{w}+b_{i,j}^{w}-1,j-i)}{U(\lambda_{i}-\lambda_{j}-a_{i,j}^{w}+b_{i,j}^{w}-1,j-i)}\cdot
		\frac{U(\lambda_{i}-\lambda_{j}-a_{i,j}^{w}-1,j-i-1)}{L(\lambda_{i}-\lambda_{j}-a_{i,j}^{w}-1,j-i-1)} \right).
	\end{split}
	\end{align}

	If a pair $ (i,j) $ appears in an admissible triple, then by proposition \ref{prop: threekinds}, the associated $ b_{i,j}^{w} $ is the same for all admissible triples containing $ (i,j) $. Denote the number of admissible triples containing $ (i,j) $ by $ N_{i,j} $. Note that $ a_{i,j}^{w} $ continuously takes values $ 0,1,..., N_{i,j}-1. $ The first factor in the parenthesis of the expression \ref{eq:second factor} suggests flip the $ N_{i,j} $ consecutive boxes: $ (i,\nu_{j}-N_{i,j}+1),(i,\nu_{j}-N_{i,j}+2),\cdots,(i,\nu_{j}) $ in the $ i $-th row of $ \nu $. At the same time, the second factor suggests flip the $ N_{i,j} $ consecutive boxes: $ (i,\lambda_{j}+1), (i,\lambda_{j}+2),\cdots,(i,\lambda_{j}+N_{i,j}), $ in the $ i $-th row of $ \lambda $. In particular, we note that the leg lengths $ (j-i) $ appeared in the first factor is never equal to zero. This hints that the flips in $ \nu $ caused by factors from $ \psi_T(q,t) $ does not conflict with those caused by the first factor in \ref{eq:second factor}. 
	
	Indeed, by Lemma \ref{lemma:characterization second kind} and Lemma \ref{lemma:characterization third kind}, it is easy to see that we are guaranteed to flip the $ c_r $, when $ T $ is of the second kind(respectively, $ x_{r+1} $, when $ T $ is of the third kind.), boxes in the $ r $-th row(or boxes in rows directly under the $ r $-th row if there is any overlap) of $ \nu $ where $ c_r,x_r $ are defined below. We elaborate as follows:
	
	If $ T $ is of the second kind, and if every row are far apart, then obviously we are done. Otherwise, if $ 0<c_r, c_{r+1}<m $, then by the second inequality of Lemma \ref{lemma:characterization second kind}, we have $ c_{r} $ boxes with zero leg length. It remains to check for $ c_{r+1}=m $ while $ 0<c_{r}<m $. Since $ r+1 $ does not contribute factors in $ \psi_{T}(q,t) $, we can freely flip boxes in rows under the $ r $-th row with $ m $ add-ons until we reach the first row with less than $ m $ add-ons under the $ r $-th row. Denote this row by row $ f $. The first inequality of Lemma \ref{lemma:characterization second kind} tells us that $ (\lambda_{f-1}+m)-(\lambda_f+c_j)\ge c_r $ if we let $ i $ be $ r $. So, we can flip the boxes located in the lower right corners of those rows with $ m $ add-ons under the $ r $-th row.
	
	If $ T $ is of the third kind, and again, if every row are far apart, we are done. Otherwise, it follows from Lemma \ref{lemma:characterization third kind} that for $ r,r+1\in T $,
	$$
	(\lambda_{r}+m-x_r)-(\lambda_{r+1}+m-x_{r+1})\ge x_{r+1}
	$$
	which enables us to flip $ x_{r+1} $ boxes with zero leg length in the $ r $-th row of $ \nu $.
	
	If $ r+1\notin T $, and row $ f $ is the first row below the $ r $-th with at least one add-on and less than $ m $ add-ons(since if an integer appears throughout columns, then it will not contribute factors in $ \psi_T(q,t) $), then since $ x_f<m-x_{r} $, we are able to flip $ x_f $ boxes of zero leg length in the $ r $-th row of $ \nu $.
	
	Thus, we can keep our discussion of $ \psi_T(q,t) $ to a minimum from now on.
	
	Now we are ready to work out all the details based on the above ideas. It suffices to show that only those fractions in \ref{eq: First_Factor} and \ref{eq:second factor} that correspond to real arm and leg lengths combinations in $ \lambda,\mu,\nu $ will survive. We study the three kinds of rectangular tableaux.
	
	\noindent\textbullet $ T $ is of the first kind. Suppose that $ T $ is on shape $ \mu $ with $ m $ columns. Let $ i $ be an integer that has never appeared in $ T $. Then elements in $ \{(i,j,w):j\in T, j>i, w\in [m]\} $ are all admissible triples whose first entry is $ i $.
	
	Because of $ \psi_T(q,t)=1 $, we do not flip any box in $ \mu $.
	
	For boxes in $ \lambda $, we can naturally regard factors in the form of $ U/L $ in \ref{eq:second factor} as flippers of boxes from lower to upper. Unless the desired arm-leg lengths combination does not exist in $ \lambda $, then we call a fraction in \ref{eq:second factor} fictitious if there is no box in $ \lambda $ with the corresponding arm and leg lengths combination. A necessary condition for a fraction with leg length $ j-i-1 $ being fictitious is that there are add-ons in both the $ (j-1) $-th and the $ j $-th row. We can write down all the fictitious flippers of $ \lambda $ as
	\begin{align*}
		\left\{\frac{U(a,j-i-1)}{L(a,j-i-1)}:\lambda_i-\lambda_j-m\le a\le \lambda_i-\lambda_{j-1}-1  \right\}.
	\end{align*}
	Note that $ j-1 $ must also belong to $ T $ if there exists a fictitious flipper of $ \lambda $ whose corresponding admissible triple has $ (i,j) $ as the first two entries.

	However, the following factors whose leg lengths are $ j-1-i $ have appeared in \ref{eq:second factor}
	\begin{align*}
		\left\{\frac{L(a,j-1-i)}{U(a,j-1-i)}:\lambda_i-\lambda_{j-1}-m\le a\le \lambda_i-\lambda_{j-1}-1  \right\}.
	\end{align*}
	Since 
	\begin{align*}
		\lambda_i-\lambda_{j-1}-m\le\lambda_{i}-\lambda_{j}-m\qquad\text{ and }\qquad \lambda_i-\lambda_{j-1}-1=\lambda_i-\lambda_{j-1}-1, 
	\end{align*}
	we know that there is no fictitious flipper in the form of $ U/L $ in the reduced form of \ref{eq:second factor}.
	
	For boxes in $ \nu $, to argue that there is no fictitious flipper in the reduced form of $ L/U $ , we notice that the possible fictitious flippers are
	\begin{align*}
		\left\{\frac{L(a,j-1-i)}{U(a,j-1-i)}:\lambda_i-\lambda_{j}-m\le a\le \lambda_i-\lambda_{j-1}-1  \right\}.
	\end{align*}
	However, the following factors appear in \ref{eq:second factor}
	\begin{align*}
		\left\{\frac{U(a,j-i-1)}{L(a,j-i-1)}:\lambda_i-\lambda_j-m\le a\le \lambda_i-\lambda_j-1  \right\}.
	\end{align*}
	Since
	\begin{align*}
		\lambda_i-\lambda_j-m=\lambda_i-\lambda_j-m\qquad\text{ and }\qquad \lambda_{i}-\lambda_{j-1}-1\le\lambda_{i}-\lambda_{j}-1,
	\end{align*}
	we know that there is no fictitious flipper in the form of $ L/U $ in the reduced form of \ref{eq:second factor}. Thus, we proved the claim for $ \mu $ being the first kind rectangular tableau.
	
	\noindent\textbullet $ T $ is of the second kind. Suppose that $ T $ is on shape $ \mu $ with $ m $ columns. Let $ c_z $ be the number of times that $ z $ has appeared in $ T $.
	
	By the expression \ref{eq: First_Factor} and the discussion above, we flip the corresponding boxes in $ \mu $ and $ \nu $ without any further justification needed.
	
	For a pair $ (i,j) $ that has appeared as the first two entries in an admissible triple of $ T $. We discuss eight cases according to $ c_j=m $ or $ 0<c_j<m $; $ c_{j-1}=m $ or $ 0<c_{j-1}<m $; $ c_i=0 $ or $ 0<c_i<m $.
	
	\noindent(I) If $ c_i=0 $, all the fictitious $ U/L $ flippers are
	\begin{align*}
		\left\{\frac{U(a,j-i-1)}{L(a,j-i-1)}:\lambda_i-(\lambda_j+c_j)\le a\le \lambda_i-\lambda_{j-1}-1  \right\}.
	\end{align*}
	However, the following factors appear in \ref{eq:second factor}
	\begin{align*}
		\left\{\frac{L(a,j-1-i)}{U(a,j-1-i)}:\lambda_i-\lambda_{j-1}-c_{j-1}\le a\le \lambda_i-\lambda_{j-1}-1  \right\}.
	\end{align*}
	Since $ \lambda_j+c_j\le\lambda_{j-1}+c_{j-1} $, we have
	\begin{align*}
		\lambda_i-\lambda_{j-1}-c_{j-1}\le\lambda_i-(\lambda_j+c_{j})\qquad\text{ and }\qquad\lambda_i-\lambda_{j-1}-1= \lambda_i-\lambda_{j-1}-1,
	\end{align*}
	and we know that there is no fictitious flipper in the form of $ U/L $ in the reduced form of \ref{eq:second factor}.
	
	Meanwhile, all the fictitious $ L/U $ flippers are
	\begin{align*}	\left\{\frac{L(a,j-1-i)}{U(a,j-1-i)}:\lambda_i-(\lambda_{j}+c_j)\le a\le \lambda_i-\lambda_{j-1}-1  \right\}.
	\end{align*}
	However, the following factors appear in \ref{eq:second factor}
	\begin{align*}
		\left\{\frac{U(a,j-i-1)}{L(a,j-i-1)}:\lambda_i-\lambda_{j}-c_j\le a\le \lambda_i-\lambda_{j}-1  \right\}.
	\end{align*}
    Since
    \begin{align*}
    	\lambda_i-\lambda_j-c_j=\lambda_i-\lambda_j-c_j\qquad\text{ and }\qquad \lambda_i-\lambda_{j-1}-1\le\lambda_i-\lambda_j-1.
    \end{align*}
    We know that there is no fictitious flipper in the reduced form of \ref{eq:second factor}.
    
    \noindent(II) If $ 0<c_i<m, 0<c_{j-1}<m, 0<c_j<m $, all the fictitious $ U/L $ flippers are
    \begin{align*}
    	\left\{\frac{U(a,j-i-1)}{L(a,j-i-1)}:\lambda_i-\lambda_{j}-c_{j}\le a\le \lambda_i-\lambda_{j-1}-1 \right\}.
    \end{align*}
	However, the following factors appear in \ref{eq:second factor}
	\begin{align*}	
		\left\{\frac{L(a,j-1-i)}{U(a,j-1-i)}:\lambda_i-\lambda_{j-1}-c_{j-1}+c_i\le a\le \lambda_i-\lambda_{j-1}-1  \right\}.
	\end{align*}
	By Lemma \ref{lemma:characterization second kind}, we know that
	\begin{align*}
		\lambda_i-\lambda_{j-1}-c_{j-1}+c_i\le\lambda_i-\lambda_{j}-c_{j}\qquad\text{ and }\qquad\lambda_i-\lambda_{j-1}-1\le \lambda_i-\lambda_{j-1}+c_{j-1}-1 .
	\end{align*}	
	We know that there is no fictitious flipper in the form of $ U/L $.
	
	Meanwhile, the fictitious $ L/U $ flippers are
	\begin{align*}	
		\left\{\frac{L(a,j-1-i)}{U(a,j-1-i)}:\lambda_i-\lambda_{j}-c_{j}+c_i\le a\le \lambda_i+c_i-\lambda_{j-1}-1  \right\}.
	\end{align*}	
	However, the following factors appear in \ref{eq:second factor}
	\begin{align*}
		\left\{\frac{U(a,j-i-1)}{L(a,j-i-1)}:\lambda_i-\lambda_j-c_j\le a\le\lambda_i-\lambda_j-1 \right\}.
	\end{align*}
	By Lemma \ref{lemma:characterization second kind}, we know that
	\begin{align*}
		\lambda_i-\lambda_j-c_j\le\lambda_i-\lambda_j-c_j+c_i\qquad\text{ and }\qquad\lambda_i+c_i-\lambda_{j-1}-1 \le\lambda_i-\lambda_j-1.
	\end{align*}
	So, there is no fictitious flipper in the reduced form of \ref{eq:second factor}.
	
	\noindent(III) If $ 0<c_i<m $, $ 0<c_{j-1}<m, c_j=m $, all the fictitious $ U/L $ flippers are
	\begin{align*}
		\left\{\frac{U(a,j-i-1)}{L(a,j-i-1)}:\lambda_{i}-\lambda_{j}-(m-c_i)\le a\le\lambda_{i}-\lambda_{j-1}-1 \right\}.
	\end{align*}
	However, the following factors appear in \ref{eq:second factor}
	\begin{align*}	
		\left\{\frac{L(a,j-1-i)}{U(a,j-1-i)}:\lambda_i-\lambda_{j-1}-c_{j-1}+c_i\le a\le\lambda_i-\lambda_{j-1}+c_i-1  \right\}.
	\end{align*}
	Since $ \lambda_{j-1}+c_{j-1}\ge\lambda_j+m $, we have
	\begin{align*}
		\lambda_i-\lambda_{j-1}-c_{j-1}+c_i\le\lambda_{i}-\lambda_{j}-(m-c_i)\qquad\text{ and }\qquad\lambda_{i}-\lambda_{j-1}-1\le\lambda_i-\lambda_{j-1}+c_i-1 .
	\end{align*}
	We know that there is no fictitious flipper in the form of $ U/L $.
	
	Meanwhile, the fictitious $ L/U $ flippers are
	\begin{align*}	
		\left\{\frac{L(a,j-1-i)}{U(a,j-1-i)}:\lambda_i+c_i-\lambda_{j}-m\le a\le \lambda_i-\lambda_{j-1}-1  \right\}.
	\end{align*}
	However, the following factors appear in \ref{eq:second factor}
	\begin{align*}
		\left\{\frac{U(a,j-i-1)}{L(a,j-i-1)}:\lambda_{i}-\lambda_{j}-(m-c_i)\le a\le\lambda_{i}-\lambda_{j}-1 \right\}.
	\end{align*}
	Since 
	\begin{align*}
		\lambda_i+c_i-\lambda_j-m=\lambda_i-\lambda_j-(m-c_i)\qquad\text{ and }\qquad \lambda_i-\lambda_{j-1}-1\le\lambda_i-\lambda_j-1,
	\end{align*}
	we know that there is no fictitious flipper in the reduced form of \ref{eq:second factor}.

	\noindent(IV) If $ 0<c_i<m $, $ c_{j-1}=m $, all fictitious $ U/L $ flippers are
	
	\begin{align*}
		\left\{\frac{U(a,j-i-1)}{L(a,j-i-1)}:\lambda_{i}-\lambda_{j}-c_j\le a\le\lambda_i-\lambda_{j-1}-1 \right\}.
	\end{align*}
	However, the following factors appear in \ref{eq:second factor}
	\begin{align*}	
		\left\{\frac{L(a,j-1-i)}{U(a,j-1-i)}:\lambda_i-\lambda_{j-1}-(m-c_i)\le a\le \lambda_i-\lambda_{j-1}-1  \right\}.
	\end{align*}
	By Lemma \ref{lemma:characterization second kind}, we know that
	\begin{align*}
		\lambda_i-\lambda_{j-1}-(m-c_i)\le \lambda_i-\lambda_j-c_j\qquad\text{ and }\qquad\lambda_i-\lambda_{j-1}-1=\lambda_i-\lambda_{j-1}-1.
	\end{align*}
	We know that there is no fictitious flipper in the form of $ U/L $.
	
	Meanwhile, the fictitious $ L/U $ flippers are
	\begin{align*}
		\left\{\frac{L(a,j-1-i)}{U(a,j-1-i)}:\lambda_i+c_i-\lambda_j-c_j\le a\le\lambda_{i}-\lambda_{j-1}-1 \right\}.
	\end{align*}
	However, the following factors appear in \ref{eq:second factor}
	\begin{align*}
		\left\{\frac{U(a,j-i-1)}{L(a,j-i-1)}:\lambda_i-\lambda_j-c_j\le a\le\lambda_i-\lambda_j-1 \right\}.
	\end{align*}
	Since 
	\begin{align*}
		\lambda_i-\lambda_j-c_j\le\lambda_i+c_i-\lambda_j-c_j\qquad\text{ and }\qquad\lambda_i-\lambda_{j-1}-1\le\lambda_i-\lambda_j-1,
	\end{align*}
	we know that there is no fictitious flipper in the reduced form of \ref{eq:second factor}.	

\noindent\textbullet $ T $ is of the third kind. Suppose that $ T $ is on shape $ \mu $ with $ m $ columns. Let $ x_z $ be the number of columns of $ T $ that miss integer $ z $ among the total $ k+1 $ distinct integers in $ T $. Write $ (i,j,w) $ for any admissible triple. We discuss two cases according to whether $ i $ has ever appeared in $ T $.

\noindent(I) $ i $ has appeared in $ T $. We find that all the fictitious $ U/L $ flippers are

    \begin{align*}
         \left\{\frac{U(a,j-i-1)}{L(a,j-i-1)}:\lambda_{i}-\lambda_{j}-x_i\le a\le\lambda_i-\lambda_{j-1}-1 \right\}.
    \end{align*}
	However, the following factors appear in \ref{eq:second factor}
	\begin{align*}
	    \left\{\frac{L(a,j-1-i)}{U(a,j-1-i)}:\lambda_{i}-\lambda_{j-1}-x_{i}+x_{j-1}\le a\le\lambda_i-\lambda_{j-1}+x_{j-1}-1 \right\}.
    \end{align*}
	By Lemma \ref{lemma:characterization third kind}, we know that 
	\begin{align*}
		\lambda_{i}-\lambda_{j-1}-x_{i}+x_{j-1}\le\lambda_{i}-\lambda_{j}-x_i\qquad\text{ and }\qquad\lambda_i-\lambda_{j-1}-1\le\lambda_i-\lambda_{j-1}+x_{j-1}-1.
	\end{align*}
	And we know that there is no fictitious flipper in the form of $ U/L $.
	
	Meanwhile, the fictitious $ L/U $ flippers are
	\begin{align*}
		\left\{\frac{L(a,j-1-i)}{U(a,j-1-i)}:\lambda_i-\lambda_j-x_i+x_{j-1}\le a\le \lambda_i-\lambda_{j-1}+x_{j-1}-1 \right\}.
	\end{align*}
	However, the following factors appear in \ref{eq:second factor}
	\begin{align*}
		\left\{\frac{U(a,j-i-1)}{L(a,j-i-1)}:\lambda_i-\lambda_j-x_i\le a\le\lambda_i-\lambda_j-1 \right\}.
	\end{align*}
	By Lemma \ref{lemma:characterization third kind}, we know that
	\begin{align*}
		\lambda_i-\lambda_j-x_i\le\lambda_i-\lambda_j-x_i+x_{j-1}\qquad\text{ and }\qquad\lambda_i-\lambda_{j-1}+x_{j-1}-1\le\lambda_i-\lambda_j-1
	\end{align*}
	Thus, there is no fictitious flipper in the reduced form of \ref{eq:second factor}.
	
	\noindent(II) $ i $ has never appeared in $ T $. We find that all the fictitious $ U/L $ flippers are
	\begin{align*}
	    \left\{\frac{U(a,j-i-1)}{L(a,j-i-1)}:\lambda_i-\lambda_j-(m-x_i)\le a\le\lambda_i-\lambda_{j-1}-1 \right\}.
	\end{align*}	
	However, the following factors are in \ref{eq:second factor}
	\begin{align*}
		\left\{\frac{L(a,j-1-i)}{U(a,j-1-i)}:\lambda_i-\lambda_{j-1}-(m-x_i)\le a\le \lambda_i-\lambda_{j-1}-1 \right\}.
	\end{align*}
	It follows that
	\begin{align*}
		\lambda_i-\lambda_{j-1}-(m-x_i)\le\lambda_i-\lambda_j-(m-x_i)\qquad\text{ and }\qquad\lambda_i-\lambda_{j-1}-1=\lambda_i-\lambda_{j-1}-1.
	\end{align*}
	We know that there is no $ U/L $ flippers in the reduced form of \ref{eq:second factor}.
	
	Meanwhile, the fictitious $ L/U $ flippers are
	\begin{align*}
		\left\{\frac{L(a,j-1-i)}{U(a,j-1-i)}:\lambda_i-\lambda_{j}-(m-x_i)\le a\le \lambda_i-\lambda_{j-1}-1 \right\}.
	\end{align*}
	However, the following factors are in \ref{eq:second factor}
	\begin{align*}
		\left\{\frac{U(a,j-i-1)}{L(a,j-i-1)}:\lambda_i-\lambda_j-(m-x_i)\le a\le\lambda_i-\lambda_{j-1}-1 \right\}.
	\end{align*}
	So,
	\begin{align*}
    	\lambda_i-\lambda_{j}-(m-x_i)=\lambda_i-\lambda_j-(m-x_i)\qquad\text{ and }\qquad\lambda_i-\lambda_{j-1}-1=\lambda_i-\lambda_{j-1}-1.
	\end{align*}
	We conclude that there is no fictitious flipper in the reduced form of \ref{eq:second factor}.
	
	Now we study the general case where $ T $ that is not necessarily rectangular. 
	
	In order to show that the Proposition \ref{prop:integral} is true, we use induction on the number of vertical blocks in $ \mu $. 
	
	Since we have already settled the case where $ \mu $ only has one block, assuming that we have shown Proposition \ref{prop:integral} for all $ \mu $ with less than $ d $ blocks, it suffices to argue that Proposition \ref{prop:integral} is true for $ \mu $ with $ d $ blocks. Denote the rightest block in $ \mu $ by $ \mu_d $ and the set-theoretic complement of $ \mu_d $ in $ \mu $ by $ \mu^-_{d} $. Denote the corresponding unique tableau on shape $ \mu $(respectively, $ \mu_d $, $ \mu_{d}^- $) by $ T(\mu) $(respectively, $ T(\mu_d) $, $ T(\mu_d^-) $). Denote by $ \nu_{d}^- $ the larger partition corresponding to $ T(\mu_d^-) $. The induction hypothesis tells us that
	\begin{align}\label{eq:induction hypothesis}
		\left( \prod_{s\in\lambda}L(a(s),l(s)) \right)\cdot\left( \prod_{s\in\mu_d^-}L(a(s),l(s)) \right)\cdot\left( \prod_{s\in\nu_d^-}U(a(s),l(s)) \right)\cdot c_{\lambda\mu_d^-}^{\nu_d^-}(q,t)  
	\end{align}
	is a polynomial in $ L,U $ and the number of factors in the form of $ U $ equals the number of factors in the form of $ L $ and the last factor in \ref{eq:induction hypothesis} is assimilated into the first three factors as a sequence of flippers characterized by their arm and leg lengths. Compare \ref{eq:induction hypothesis} and \ref{prop:integral} as well as Theorem \ref{theorem: FactorizationFormula}, since we have pointed out in the proof of the general case of Theorem \ref{theorem: FactorizationFormula} that $ \psi_T(q,t) $ is multiplicative in terms of the rectangular blocks whenever $ T $ is unique of its shape and weight, we only need to manage to realize the remaining factors of \ref{eq:second factor} as flippers in $ \lambda $ and $ \nu $ and make sure that they do not clash with the existing flippers, and finally, migrate the existing flippers from $ \nu_{d}^- $ to $ \nu $.
	
	To work this out, we notice that there are two cases for an admissible triple $ (i,j,w) $ in $ T(\mu_d) $ determined by either $ i $ is in $ T(\mu_d) $.
	
	If $ i\in T(\mu_d) $, then both $ i,j $ appear in every column of $ T(\mu_d^-) $ which implies that $ i $ and $ j $ cannot appear as the first entry of any admissible in $ T(\mu_d^-) $, therefore the new flippers will not clash with the existing flippers. Moreover, both $ a(i,j,w)^{T(\mu_d)} $ and $ b(i,j,w)^{T(\mu_d)} $ take the same values as their counterparts $ a(i,j,w)^{T(\mu)} $ and $ b(i,j,w)^{T(\mu)} $, respectively. Thus, we can flip the hooks in $ \lambda,\nu $ as if we are doing this for a rectangular $ \mu $ discussed above, and migrate the existing hooks without any trouble.
	
	If $ i\notin T(\mu_d) $, then the $ i $-th row of $ \nu_{d}^- $ has the same length as the $ i $-th row $ \nu $. So, we do not have any trouble migrating the existing hooks in the $ i $-th row of $ \nu_{d}^- $. We note that $ b(i,j,w)^{T(\mu_d)}=b(i,j,w)^{T(\mu)} $. Moreover, since the smallest $ a(i,j,w)^{T(\mu)} $ is equal to the largest $ a(i,j,w)^{T(\mu_d^-)} $ plus one, we know that we can just keep flipping the corresponding hooks in the $ i $-th row of $ \nu $ as well as $ \lambda $ from where they are left towards the right. And this finishes the proof.
		
\end{proof}

\end{document}